\tikzstyle directed=[postaction={decorate,decoration={markings,
    mark=at position #1 with {\arrow{>}}}}]
\tikzstyle rdirected=[postaction={decorate,decoration={markings,
    mark=at position #1 with {\arrow{<}}}}]
\tikzstyle{mid>}=[decoration={markings, mark=at position 0.5 with
\tikzstyle{mid<}=[decoration={markings, mark=at position 0.5 with
\tikzstyle{upper>}=[decoration={markings, mark=at position 0.8 with
\tikzstyle{upper<}=[decoration={markings, mark=at position 0.8 with
\tikzstyle{lower>}=[decoration={markings, mark=at position 0.2 with
\tikzstyle{lower<}=[decoration={markings, mark=at position 0.2 with
\newtheorem{theorem}{Theorem}[section]
\theoremstyle{definition}
\newtheorem{proposition}[theorem]{Proposition}
\newtheorem{lemma}[theorem]{Lemma}
\newtheorem{definition}[theorem]{Definition}
\newtheorem{remark}[theorem]{Remark}
\newtheorem{corollary}[theorem]{Corollary}
\newtheorem{question}[theorem]{Question}
\def\BN{\mathbbm N}
\def\BZ{\mathbbm Z}
\def\BQ{\mathbbm Q}
\def\BR{\mathbbm R}
\def\BC{\mathbbm C}
\def\calX{\mathcal X}
\def\calP{\mathcal P}
\def\la{\langle}
\def\ra{\rangle}
\def\al{\alpha}
\def\be{\begin{equation}}
\def\ee{\end{equation}}
\def\lprod{\operatorname{\overleftarrow\prod}}
\def\rprod{\operatorname{\overrightarrow\prod}}
\def\cA{\mathcal A}
\def\fB{\mathfrak B}
\newcommand{\qbinom}[2]{\text{$\left[\begin{array}{c}#1\\ #2\end{array}
\right]$}}
\def\fsl{\mathfrak{sl}}
\def\Hom{\mathrm{Hom}}
\def\ev{\mathrm{ev}}
\def\HOMFLY{HOMFLYPT}
\def\sL{\mathsf{L}}
\def\sM{\mathsf{M}}
\def\cR{\mathcal{R}}
\def\Ann{\mathrm{Ann}}
\def\GL{\mathrm{GL}}
\def\cW{\mathbb W}
\def\calF{\mathcal{F}}
\def\calC{\mathcal{C}}
\def\La{\Lambda}
\def\Sym{\mathrm{Sym}}
\def\calS{\mathcal{S}}
\newcommand{\nWeb}{\cat{nWeb}}
\renewcommand{\to}{\rightarrow}
\newcommand{\hackcenter}[1]{
 \xy (0,0)*{#1}; \endxy}
\newcommand{\scs}{\scriptstyle}
\newcommand{\xsum}[2]{
  \xy
  (0,.4)*{\sum};
  (0,3.7)*{\scs #2};
  (0,-2.9)*{\scs #1};
  \endxy
}
\newcommand{\refequal}[1]{\xy {\ar@{=}^{#1}
(-1,0)*{};(1,0)*{}};
\endxy}
\newcommand{\und}{\underline}
\newcommand\no[1]{}
\def\fg{{\mathfrak g}}
\def\Zxq{\BQ(q)[x^{\pm 1}]}
\def\Zq{\BZ[q^{\pm 1}]}
\def\cR{\mathcal R}
\def\Ass{\mathfrak{A}}
\def\hAss{\widehat{\Ass}}
\def\Qq{{\BQ(q)}}
\def\fgl{\mathfrak{gl}}
\def\ve{\varepsilon}
\def\Capam{\mathsf{\Lambda}}
\def\Cupam{\mathsf{V}}
\def\sY{\mathsf{Y}}
\def\cE{{\mathcal E}}
\def\Ext{\operatorname{Ext}}
\def\cR{\mathcal R}
\def\cT{\mathcal T}
\def\codim{\mathrm{codim}}
\def\EV{\mathrm{eval}}
\def\lef{{\mathrm{left}}}
\def\rig{{\mathrm{right}}}
\def\Lin{\mathrm{Lin}}
\def\nRep{n\mathbf{Rep}_{\wedge}}
\def\nWeb{n\mathbf{Web}}
\def\nLad{n\mathbf{Lad}}
\def\ntRep{n\mathbf{Rep}}
\def\Lad{\mathbf{Lad}}
\def\Ladnm{n\mathbf{Lad}_m}
\def\Uqgn{\mathrm{U}_q(\mathfrak{gl}_n)}
\def\Uqgm{\mathrm{U}_q(\mathfrak{gl}_m)}
\def\Uqgmd{\dot{\mathrm{U}}_q(\mathfrak{gl}_m)}
\def\Uqsn{\mathrm{U}_q(\mathfrak{sl}_n)}
\def\len{\mathrm{length}}
\def\Qqn{\BQ(q^{1/n})}
\def\Zqt{\BZ[q^{\pm 2}]}
\def\Qxq{\BQ(q)[x^{\pm 1}]}
\def\ot{\otimes}
\def\Eir{E_i^{(r)}}
\def\Fir{F_i^{(r)}}
\def\tJ{\tilde J}
\def\He{\mathrm{He}}
\def\eval{\mathrm{eval}}
\def\La{\Lambda}
\def\ot{\otimes}
\def\tnm{\vartheta(n,m)}
\def\id{\mathrm{id}}
\def\Lcl{\mathrm{Lcl}}
\def\cl{\mathrm{cl}}
\def\pabn{p^n_{a,b}}
\def\w{\mathrm{w}}
\def\Tail{\mathrm{Tail}}
\def\Capp{\mathrm{Cap}}
\def\Cupp{\mathrm{Cup}}
\def\Tail{\mathrm{Tail}}
\def\sH{\mathcal{H}}
\def\cP{\mathcal{P}}
\def\Rn{\nRep}
\def\Aut{\mathrm{Aut}}
\def\tr{\mathrm{tr}}
\def\br{\mathbf{r}}
\def\bg{\mathbf{g}}
\def\oT{\mathring T}
\begin{document}
\title[The colored HOMFLYPT function is $q$-holonomic]{
The colored HOMLFYPT function is $q$-holonomic}
\author{Stavros Garoufalidis}
\address{School of Mathematics \\
         Georgia Institute of Technology \\
         Atlanta, GA 30332-0160, USA \newline
         {\tt \url{http://www.math.gatech.edu/~stavros}}}
\email{stavros@math.gatech.edu}
\author{Aaron D. Lauda}
\address{Department of Mathematics \\
        University of Southern California \\
        3620 S. Vermont Ave, KAP 108 \\
        Los Angeles, CA 90089-2532, USA \newline
{\tt \url{http://www-bcf.usc.edu/~lauda/Aaron_Laudas_Page/Home.html}}}
\author{Thang T.Q. L\^e}
\address{School of Mathematics \\
         Georgia Institute of Technology \\
         Atlanta, GA 30332-0160, USA \newline
         {\tt \url{http://www.math.gatech.edu/~letu}}}
\email{letu@math.gatech.edu}

\date{February 18, 2017}

\thanks{
{\em Key words and phrases: Knots, HOMFLYPT polynomial, colored HOMFLYPT
polynomial, MOY graphs, webs, ladders, skew Howe duality,
quantum groups, $q$-holonomic, super-polynomial, Chern-Simons theory.}}

\begin{abstract}
We prove that the HOMFLYPT polynomial of a link, colored by partitions with
a fixed number of rows is a $q$-holonomic function. Specializing to the case
of knots colored by a partition with a single row, it proves the existence
of an $(a,q)$ super-polynomial of knots in 3-space, as was conjectured
by string theorists. Our proof uses
skew Howe duality that reduces the evaluation of web diagrams
and their ladders to a Poincare-Birkhoff-Witt computation of an auxiliary
quantum group of rank the number of strings of the ladder diagram. The result is a concrete and algorithmic web evaluation algorithm that is manifestly $q$-holonomic.
\end{abstract}

\maketitle

\tableofcontents


\section{Introduction}
\label{sec.intro}

\subsection{The colored Jones polynomial}
\label{sub.jones}

The best-known quantum invariant of a knot or link $L$ in 3-space is the
Jones polynomial $J_L$, which when properly normalized, is a Laurent
polynomial in a variable $q$ with integer coefficients. Jones's
discovery of this polynomial marked the birth of quantum
topology~\cite{Jones}, and shortly afterwards a plethora of quantum
invariants of knots and links were discovered by Reshetikhin-Turaev;
see~\cite{RT} and also the books~\cite{Ohtsuki,Tu2}.

Although Jones's definition of the Jones polynomial came from
the von Neumann algebras and their subfactors, a
connection of the Jones polynomial with the simplest non-abelian simple
Lie algebra, $\fsl_2$, and its representations was soon discovered.

More precisely, given a simple Lie algebra $\fg$ and an irreducible
(finite dimensional) representation $V$ (usually called a color,
in the physics literature) and a knot $K$, the theory of ribbon category
\cite{RT,Tu2} defines an invariant $J_K^\fg(V) \in \BZ[q^{\pm 1}]$.
The original construction of this invariant was a rational
function in a fractional power of $q$, and a normalization of this
invariant was shown in \cite{Le:Integral} to be an element of
$\BZ[q^{\pm 2}]$. The Reshetikhin-Turaev construction extends to
framed oriented links as well, each component of which is colored by
an irreducible representation of $\fg$.

Specializing to $\fsl_2$, and using the well-known
fact that there is one irreducible representation $h_n$ of $\fsl_2$
of dimension $n+1$ for every natural number $n$, it follows that a knot $K$
gives rise to a sequence of polynomials $J^{\fsl_2}_K(h_n) \in \BZ[q^{\pm 1}]$
for $n=0,1,2,\dots$. This sequence, although infinite, satisfies some
finiteness property which in particular implies that it is
determined by finitely many initial terms (the number of initial terms
depends on the knot though). More precisely, it was proven by two of the
authors in~\cite{GL} that for every knot $K$ there exists a recursion
\begin{equation}
\label{eq.Jn}
c_d(q^n,q) J^{\fsl_2}_K(h_{n+d}) + c_{d-1}(q^n,q) J^{\fsl_2}_K(h_{n+d-1})
+ \dots  + c_0(q^n,q) J^{\fsl_2}_K(h_{n})
=0
\end{equation}
for all $n \in \BN$, where $d \in \BN$,
$c_j(u,v) \in \BQ[u^{\pm 1}, v^{\pm 1}]$ for all $j=0,\dots,d$ and
$c_d \neq 0$. The recursion depends on the knot, and although it is
not unique, it can be chosen canonically.

Aside from the above-mentioned finiteness statement, the importance
of this minimal recursion (often called the $\hat{A}$-polynomial)
 is not a priori clear.
Keeping in mind that $\mathrm{PSL}(2,\BC)$ is the isometry group of
orientation preserving isometries of 3-dimensional hyperbolic space,
there are at least two connections between the $\hat{A}$-polynomial
and hyperbolic geometry: (a)
specializing the coefficients of the above recursion to $q=1$, is
conjectured to recover the defining polynomial for the
$\mathrm{SL}(2,\BC)$-character variety of the knot
complement, restricted to the boundary torus of the knot complement.
This co-called AJ Conjecture is one link of the colored Jones polynomial
with the geometry of $\mathrm{SL}(2,\BC)$ representations;
see~\cite{Ga:AJ,Le:AJ14}. (b) Such a recursion can be used to numerically
several terms of the asymptotics of the colored Jones polynomial at
complex roots of unity, a fascinating story that connects quantum
topology to hyperbolic geometry and number theory. For a sample of
computations, the reader may consult~\cite{Ga3,GZ}.

Returning back to recursion relations, sequences that
satisfy a recursion relation of the form~\eqref{eq.Jn} are $q$-holonomic,
a key concept introduced by Zeilberger~\cite{Z90}. $q$-holonomic functions
enjoy several closure properties. A key theorem of
Wilf-Zeilberger is that a multisum of a $q$-proper hypergeometric term
(where we sum all but one variable) is $q$-holonomic~\cite[Thm.5.1]{WZ}.
This theorem, and the fact that quantum knot invariants are multisums of
$q$-proper hypergeometric terms (coming from structure constants of
corresponding quantum groups), explains why the quantum knot invariants
are $q$-holonomic functions.

Converting the above statement into a theorem and a proof requires
additional work. To begin with, one needs to consider functions of several
variables. For instance the $\fsl_3$-colored Jones polynomial of a knot,
or the $\fsl_2$-colored Jones polynomial of a 2-component link is a
function of two discrete variables. A definition of $q$-holonomic functions
of several variables was given by Sabbah~\cite{Sabbah} using the language
of homological algebra. Sabbah used a theory of Hilbert dimension for
modules over rings generated by $q$-commuting variables, and proved
a key Bernstein inequality. A survey of Zeilberger's and Sabbah's work was
given by two of the authors in~\cite{GL:survey}, where detailed proofs and
examples of $q$-holonomic functions is discussed. A summary of the main
definitions and properties of $q$-holonomic functions is given in
Section~\ref{sec.qholo}.

\subsection{The colored \HOMFLY\ polynomial}
\label{sub.homfly}

Shortly after the discovery of the Jones polynomial, five groups
independently discovered a two-variable polynomial, the \HOMFLY \
polynomial $W$ that takes values in the ring
$\BQ(q)[x^{\pm 1}]$~\cite{HOMFLY,PT}.
Turaev \cite{Tu1} showed that the latter unifies the quantum link invariants
$J_L^{\fsl_n}(h_1,\dots,h_1)$, where $h_1=\BC^n$ is the defining
representation of $\fsl_n$, as follows: for every $n \geq 2$ and every
framed oriented link $L$ whose components are colored by $\BC^n$, we have:
$$
\tJ_L^{\fsl_n}(h_1,\dots,h_1) = W_L|_{x=q^n}.
$$
Here $\tJ_L$ is a normalized version of $J_L$, see Section \ref{sec.3cats}.

Let $\calP$ denote the set of partitions
$\lambda=(\lambda_1,\lambda_2,\dots)$ where
$\lambda_1 \geq \lambda_2 \geq \dots \geq 0$ is a decreasing sequence
of nonnegative natural numbers, all but finitely many zero. As usual,
a partition is presented by a Young diagram. Let $\calP_{n-1}$ be the set of
partitions  with at most $n-1$ rows. Irreducible representations of $\fsl_n$
are parameterized by partitions in $\calP_{n-1}$, and we will identity a
partition $\lambda \in \cP_{n-1}$ with its corresponding irreducible
$\fsl_n$-module (which has highest weight $\lambda$, see \cite{FH}).
With this identification, the partition $h_a$, which has one row and $a$
boxes, is the $a$-th symmetric power of $h_1$, and the partition $e_a$,
which has one column and $a$ boxes, is the $a$-th external power of $h_1=e_1$.

Wenzl \cite{Wenzl}, generalizing Turaev's result, showed  that the
$\fsl_n$-quantum link invariants interpolate a two-variable function in the
following sense. If $L$ is an oriented framed link with $r$ ordered
components and $\lambda_i$ are partitions with at most $\ell$ rows
for $i=1,\dots,r$, then there exists a two-variable
colored \HOMFLY\ function $W_L(\lambda_1,\dots,\lambda_r) \in
\BQ(q)[x^{\pm 1}]$ such that for all natural numbers $n$ with $n \geq \ell+1$
we have:
$$
\tJ_L^{\fsl_n}(\lambda_1,\dots,\lambda_r) =
W_L(\lambda_1,\dots,\lambda_r)|_{x=q^n}.
$$
A detailed definition of the \HOMFLY\ polynomial and its colored version
in terms of the \HOMFLY\ polynomial of cables of the link is given
in \cite{Mo1,Mo2}.

\subsection{Statement of our results}
\label{sub.results}

The set $\cP$ of all partitions has an involution defined by
$\lambda \mapsto \lambda^\dagger$ which
transposes columns and rows of a partition.
The map $\iota_\ell : \BN^\ell \to \calP_{\ell}$ given by
$$
\iota_\ell (n_1,\dots,n_\ell)
=(\lambda_1,\dots,\lambda_\ell) \in \calP_\ell, \quad \text{where } \
\lambda_i= \sum_{j=1}^{n-i+1} n_j
$$
is a bijection, and so is
$\iota_\ell^\dagger : \BN^\ell \to \calP_{\ell}^\dagger$ (where
$\calP_\ell^\dagger$ is the set of all partitions with at most $\ell$ columns)
defined by
$\iota_\ell^\dagger(n_1,\dots,n_\ell)= (\iota_\ell(n_1,\dots,n_\ell))^\dagger$.

\begin{theorem}
\label{thm.1}
Suppose $L$ is an oriented, framed link with $r$ ordered components
and $\ell$ a nonnegative integer. Then, the following functions
$$
W_L \circ (\iota_\ell)^r : \BN^{r \ell} \to \Zxq, \qquad
W_L \circ (\iota^\dagger_\ell)^r : \BN^{r \ell} \to \Zxq
$$
are $q$-holonomic.
\end{theorem}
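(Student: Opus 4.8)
The plan is to make the skew Howe duality evaluation algorithm completely explicit, exhibit the resulting function as a multisum of a single $q$-proper hypergeometric term, and then invoke the closure properties of $q$-holonomic functions summarized in Section~\ref{sec.qholo} (in particular the Wilf--Zeilberger multisum theorem~\cite{WZ}). Throughout, the base ring is $\Zxq$, so $x$ and $q$ play the role of parameters and holonomicity is measured with respect to the $q$-shift operators in the $r\ell$ discrete variables $\mathbf{n}_i = (n_{i,1},\dots,n_{i,\ell}) \in \BN^\ell$, $i=1,\dots,r$.

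First I would fix a diagram $D$ of $L$ presented as a vertical composition of elementary tangles (cups, caps, and positive/negative crossings), color its components by $\lambda_i = \iota_\ell(\mathbf{n}_i)$, and apply the Reshetikhin--Turaev/MOY web realization to rewrite $W_L(\lambda_1,\dots,\lambda_r)$ as the evaluation of a ladder web. The point of using the bijection $\iota_\ell$ is exactly that the coordinates $\mathbf{n}_i$ are the fundamental-weight coordinates of a $\fgl$-weight, so the edge labels of this ladder, and hence all the parameters entering the computation, depend integer-linearly on the discrete variables. Skew Howe duality then translates the ladder evaluation into a matrix coefficient in the idempotented quantum group $\dot{\mathbf U}_q(\mathfrak{gl}_m)$, where $m$, the number of strands, is a constant determined by $D$ and $\ell$ and is independent of $\mathbf{n}$. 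Each rung becomes a divided power $E_i^{(r)}$ or $F_i^{(r)}$ with $r$ a linear function of the $\mathbf{n}_i$, and the evaluation takes the form $\la v_- \mid X_1 \cdots X_k \mid v_+\ra$ between the relevant weight spaces, where the parameter $x = q^n$ enters only through the weights and the quantum-dimension factors of the closed-off part of the web.

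The computational core is the Poincar\'e--Birkhoff--Witt straightening of the product $X_1 \cdots X_k$: reordering into the standard PBW basis of $\dot{\mathbf U}_q(\mathfrak{gl}_m)$ and extracting the coefficient of the identity in the appropriate weight space yields a sum over a bounded number of internal indices, each summand being a product of $q$-binomial coefficients and powers of $q$ (with $x$-dependent quantum integers) whose arguments are integer-linear in the external variables $\mathbf{n}_i$ and in the summation indices. This is precisely the shape of a $q$-proper hypergeometric term over $\Zxq$, so the multisum theorem gives that $W_L \circ (\iota_\ell)^r$ is $q$-holonomic. The transpose statement for $W_L \circ (\iota_\ell^\dagger)^r$ follows from the identical computation carried out in the column (exterior-power) version of the duality, where rows are replaced by columns.

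The main obstacle is the bookkeeping in this last step: one must verify that the quantum-group structure constants produced by the straightening are genuinely \emph{proper} $q$-hypergeometric jointly in all variables, i.e. that every $q$-factorial argument stays integer-linear, that the $x$-dependent quantum integers $[n+\,\text{(linear)}\,]$ remain proper hypergeometric as functions of the discrete variables over the extended ring $\Zxq$, and that the number of summation indices is bounded independently of $\mathbf{n}$. Establishing this uniform, manifestly hypergeometric normal form for the web-evaluation algorithm is where the real work lies; the reduction through skew Howe duality and the concluding appeal to the closure properties of Section~\ref{sec.qholo} are by comparison formal.
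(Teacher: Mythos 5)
Your plan coincides with the paper's strategy, but only for the special case of one\emph{-column} colors: braid closure, passage to ladders via skew Howe duality, PBW straightening of the $E_i^{(r)}$, $F_i^{(s)}$ in $\dot{\mathrm{U}}_q(\mathfrak{gl}_{2m})$ against the highest weight $\vartheta$, and a terminating multisum of $q$-binomials is exactly how the paper proves Theorem \ref{thm.2} (via Propositions \ref{r.main0} and \ref{prop.tech1}). The genuine gap is your very first step: coloring the components by general partitions $\lambda_i=\iota_\ell(\mathbf{n}_i)$ and ``rewriting $W_L(\lambda_1,\dots,\lambda_r)$ as the evaluation of a ladder web'' whose edge labels are integer-linear in the $n_{i,j}$. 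No such web exists. The objects of $\nWeb$ and $\nLad$ are sequences of \emph{fundamental} (exterior-power) labels; a general irreducible $V_\lambda$ is not an edge label but only a direct summand of a tensor product of fundamentals, and the categories are not idempotent complete. To encode $V_\lambda$ diagrammatically you would need either a cable whose number of strands grows with $|\lambda|$ (destroying the fixed $m$ and the bounded number of summation indices your WZ argument requires), or a Young-symmetrizer/clasp idempotent inserted into a fixed $\ell$-cable; such projectors are not single webs, and the uniform $q$-proper-hypergeometric behavior of their expansion coefficients as functions of $\mathbf{n}$ is precisely what cannot be assumed --- it is essentially the open problem the paper's introduction alludes to when discussing the earlier reduction in \cite{Ga:homfly}.

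The missing idea, which the paper uses to bridge exactly this gap, is the Jacobi--Trudi formula combined with parts (b) and (c) of Proposition \ref{r.WL}. The transposition symmetry $W_L(\lambda)=W_L(\lambda^\dagger)|_{q\to -q^{-1}}$ reduces rows to columns; then $s_\lambda=\det\bigl((e_{\lambda^\dagger_i+j-i})_{i,j}\bigr)$ together with the cabling property turns the $\lambda$-colored invariant into a signed sum, over $(\mathrm{Sym}_\ell)^r$, of invariants of the \emph{fixed} $\ell$-fold parallel $\Delta L$ colored by one-column partitions whose sizes are affine-linear in $\mathbf{n}$ and $\sigma$ (equation \eqref{eq.JT}). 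This avoids projectors entirely, keeps the number of strands independent of $\mathbf{n}$, and reduces Theorem \ref{thm.1} to the one-column case plus the closure of $q$-holonomic functions under affine substitution and finite sums (Theorem \ref{thm.prop}), which is the part your proposal does handle correctly. A minor additional point: in the web calculus it is the column (exterior-power) case that is directly computable, and the row case that follows by the $q\to-q^{-1}$ symmetry, not the reverse as your closing sentence suggests; ``symmetric Howe duality'' is different machinery from the exterior skew Howe duality of \cite{CKM} used here.
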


\begin{corollary}
\label{cor.thm1}
For a framed oriented knot $K$ colored with partitions with a single
row, the sequence $W_{K}(h_a)$ for $a=0,1,2,\dots$
is $q$-holonomic.
\end{corollary}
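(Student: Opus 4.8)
The plan is to derive the corollary as the simplest special case of Theorem~\ref{thm.1}, taking $r=1$ and $\ell=1$. First I would note that a knot $K$ is an oriented, framed link with a single ordered component, so the hypothesis of $r$ ordered components in Theorem~\ref{thm.1} is met with $r=1$. Next I would unwind the bijection $\iota_1 : \BN \to \calP_1$: setting $\ell=1$ in the defining formula leaves only the index $i=1$, giving $\iota_1(a)=(\lambda_1)$ with $\lambda_1=a$. Thus $\iota_1(a)$ is the partition with a single row and $a$ boxes, which is exactly the color $h_a$.

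With this identification, Theorem~\ref{thm.1} applied to $L=K$ with $r=\ell=1$ states precisely that $W_K \circ \iota_1 : \BN \to \Zxq$ is $q$-holonomic. Since $(W_K \circ \iota_1)(a) = W_K(h_a)$ by the computation above, this is the assertion that the sequence $a \mapsto W_K(h_a)$ is $q$-holonomic, which is the content of the corollary.

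The only thing requiring attention is the elementary bookkeeping that $\iota_1(a)=h_a$; there is no genuine obstacle here, since all of the substantive work---the reduction of web and ladder evaluations to a Poincar\'e--Birkhoff--Witt computation that is manifestly $q$-holonomic---is already absorbed into Theorem~\ref{thm.1}. I would also remark, parenthetically, that feeding in the companion map $\iota_1^\dagger$ instead yields the single-column colors $e_a=(a)^\dagger$, so that the analogous statement for $W_K(e_a)$ follows by the same one-line specialization; and that, for a single discrete variable, $q$-holonomicity is by definition the existence of a nontrivial linear $q$-difference recursion of the shape~\eqref{eq.Jn}, which is exactly the form in which the $(a,q)$ super-polynomial of $K$ is sought.
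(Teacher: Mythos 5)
Your proof is correct and takes essentially the same approach the paper intends: the corollary is the immediate specialization of Theorem~\ref{thm.1} to $r=\ell=1$, where $\iota_1(a)=(a)=h_a$, which is exactly the bookkeeping you carried out. Your parenthetical remarks (that $\iota_1^\dagger$ gives the single-column colors $e_a$, and that one-variable $q$-holonomicity amounts to a recurrence of the shape~\eqref{eq.Jn}) are also accurate and consistent with the paper.
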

Some special cases of the above corollary are known; see
Cherednik~\cite{Cherednik} for the case of torus knots,
Wedrich~\cite{Wedrich} for the case of 2-bridge knots, and
Kawagoe~\cite{Kawagoe} for some 2-bridge knots and some pretzel knots.

On the set of all functions from $\BN$ to $\BQ(x,q)$ define two operators
$\sL, \sM$ by
$$
(\sL f)(a)=f(a+1), \quad (\sM f)(a) = q^a f(a)\,.
$$
Then $\sL \sM = q \sM \sL$, and a recurrence for $f$ has the form $Pf=0$,
where
$$
P = \sum_{j=0}^d c_j(q,x,M) L^j, \quad c_j(q,x,M) \in \BZ[q, x,M] \,.
$$
When non-zero recurrence for $f$ exists, there are many of them, and there
is a unique  one, up to sign, such that (i) $d$ is minimal, (ii) the total
degree in $q,x,M,L$ is minimal, and (iii) all the integer coefficients of
$P$ are co-primes, see~\cite{Ga:AJ,Le:AJ14}. For a knot $K$, we
denote such a minimal recurrence for $W_K(h_a)$ by $A_K(M,L,x,q)$.

Physicists have conjectured the existence of the $4$-variable polynomial
(see for instance the works~\cite{GSV, AV}), and have further conjectured that
when we set $q=1$, the corresponding $3$-variable polynomial $A_K(M,L,x,1)$
is equal, after some universal (i.e., knot independent) change
of variables with a $3$-variable polynomial that comes out of knot
contact homology~\cite{Ng,AVNg}. In the physics literature,
$A_K(\sM,\sL,Q,1)$ is often called the $Q$-deformed $A$-polynomial of a knot
and it appears in string theory in geometry of spectral
curves, topological strings, matrix models, and M-theory dualities.
There is a lot of literature on this polynomial following
the pioneering work of Gukov, Fuji, Stosic, Su{\l}kowski and others.
For a detailed discussion, see
\cite{AV,DG:refined,Dunfield-Gukov,Fuji1,Fuji2,GL,
Gukov:lectures,Gukov-Stosic,Gukov-Vafa}.

\begin{remark}
\label{rem.aholo}
The proof of Theorem \ref{thm.1} implies the function
$\BN \times \BN^{r \ell} \to \BZ[q^{\pm 1}]$ defined by
$$
(n, \vec{m}) \in \BN \times \BN^{r \ell} \mapsto
(W_L \circ (\iota_\ell)^r)(\vec{m})|_{x=q^n}
$$
is $q$-holonomic in all $r\ell+1$ variables.
The latter was conjectured in \cite{GV}.
\end{remark}

\subsection{An example}
\label{sub.rec31}
Suppose $K$ is the right-hand trefoil, see Figure \ref{fig:trefoil},
with 0 framing. Define

\begin{align*}
a_0&= x^4 (x^2 \sM^2-1) (q^6 x^2\sM^4 -1) \\
a_1&= q^7 (q^4 x^2\sM^4-1) (q^8 x^4\sM^8 - q^4 x^4 \sM^6 +q^2 x^4 \sM^4 + x^4 \sM^4 -q^6 x^2 \sM^4 -q^2 x^2 \sM^4  -x^2 \sM^2 +1)  \\
a_2&= -q^{18} x^2 M^6  (q^4 \sM^2-1) (q^2x^2\sM^4-1).
\end{align*}

Then for all $m \ge 0$,
\be
\label{eq.tre}
 a_2 W_K(h_{m+2})  + a_1 W_K(h_{m+1}) + a_0 W_K(h_{m})=0.
\ee
\begin{remark}
In \cite{Fuji1}, a conjectural formula for the colored HOMFLYPT function
$W_K(h_m)$ is given, for the case when $K$ is the left-hand trefoil
(and other torus knots). Based on this conjectural formula, the authors
of \cite{Fuji1}, using a computer program of Zeilberger \cite{PWZ},
found a recurrence formula for $W_K(h_m)$, which is different
from \eqref{eq.tre} since another normalization was used. In
Appendix~\ref{app.d}, we will give a proof of \eqref{eq.tre}.
\end{remark}

\subsection{Plan of the proof}
\label{sub.plan}

The quantum group invariants require familiarity with category theory,
representation theory of quantum groups as well as a understanding the
accompanying graphical notation.

In Section~\ref{sec.3cats} we discuss three categories $\nRep$, $\nWeb$
and $\nLad$ which are related to representations of quantum groups
as well as to a diagrammatic description of links and their invariants.
In Section~\ref{sec.x} we discuss how to unify the $\fsl_n$
link invariants to one that is independent of $n$.
In Section~\ref{sec.qholo} we discuss the basic definitions, examples
and properties of $q$-holonomic functions.
In Section~\ref{sec.thm1} we give the proof of Theorem~\ref{thm.1}.
The proof is concrete and algorithmic, with a detailed example for the
case of the right-handed trefoil given in Section~\ref{sub.trefoil}.
We summarize the steps here, using the notation of the proof.

\begin{itemize}
\item[(a)]
We start with a braid word representative $\beta$  whose closure $\cl(\beta)$
is the link $L$. The corresponding braid has $m$ strands and a fixed number
of letters. For the trefoil, this is given in equation~\eqref{eq.E31}.
\item[(b)]
The link is now given by joining to the braid the bottom and top part of the
closure consisting of cup/cap diagrams, respectively. We replace the bottom
part by a monomial in some operators $E_i$, the braid word by a product of
Lusztig braid operators $T_i(b)^{\pm 1}$
defined in section~\ref{sub.braiding}, and the top part by a monomial in some
operators $F_j$. For the trefoil, this is given in equation~\eqref{eq.Y31}.
\item[(c)]
Each operator $T_i(b)^{\pm 1}$ is a sum (over the integers)
of operators $E_i$ and $F_j$ (see
equations~\eqref{eq.tau+}--\eqref{eq.tau-}).
\item[(d)]
The operators $E_i$ and $F_j$ satisfy the quantum group $q$-commutation
relations given in equations~\eqref{eq.com1}--\eqref{eq.com4}, and using
those we can sort the above expressions by moving all the $E$'s to the
right and all the $F$'s to the left.
\item[(e)]
The fact that the operators $E_i$ annihilate the last
bit $1_\vartheta$, corresponding to the projection onto a highest weight
determined from the link diagram, adds a product of delta functions
in our sum.
\item[(f)]
The requirement that all weights appearing in the sum are positive introduces
Heaviside functions into the sum as explained in the proof of
Proposition~\ref{prop.tech1}.
\item[(g)]
This way, we obtain a multidimensional sum over the integers,
whose summand is a product of extended $q$-binomial coefficients of linear
forms (with integer coefficients) of the summation variables, times a sign
raised to a linear form of the summation variables. These sums are
always terminating. For the trefoil, this $6$-dimensional sum
is given in equation~\eqref{eq.trefoil}.
\item[(h)]
We show in section \ref{sec:evalqh} that such multisums are q-holonomic.
\end{itemize}

Hidden in the above algorithm is the quantum skew Howe duality \cite{CKM},
which allows us to compute colored $\fsl_n$-invariants by evaluating
ladder diagrams in $2m$ strands using
an auxiliary quantum group based on the Lie algebra $\mathfrak{gl}_{2m}$.
Steps (c)-(e) are exactly a Poincare-Birkhoff-Witt computation on
$\mathfrak{gl}_{2m}$.

To avoid any confusion or misunderstanding, in an earlier
article~\cite{Ga:homfly} one of the authors reduced the $q$-holonomicity
of the colored \HOMFLY\ polynomial to the $q$-holonomicity of the evaluation
of MOY graphs, and observed that the latter would follow from the existence
of a $q$-holonomic evaluation algorithm for MOY graphs. Unfortunately,
such an algorithm based on simplifications of MOY graphs or web diagrams
has yet to be found.

\subsection{Computations and questions}
\label{odds}

With regards to computation of the $4$-variable polynomial of a knot,
there are several formulas for the \HOMFLY\
polynomial of some links in the literature colored by partitions with
one row, see for example \cite{Kawagoe,NRS,Ito1,Ito2}. These formulas are
manifestly $q$-holonomic, as follows by the fundamental theorem of WZ
theory. Using these formulas and Wilf-Zeilberger theory, one can sometimes
compute the $4$-variable knot polynomial. For sample computations for
the case of twist knots and some torus knots, see~\cite{NRS}.

The next question is inaccessible with our methods. A positive answer
would be useful in the study of LMOV (also known as BPS) invariants of
links \cite{LMOV}. First, using linearity extend the colored \HOMFLY\
function to the case when the color of each link component is a
$\BZ$-linear combination of Young diagrams.
Let $p_a=\sum_{k=0}^a (-1)^k (k, 1^{a-k})$. Note that
$(k, 1^{a-k})$ is a hook partition with one row with $k$ boxes and
one column with $a-k$ boxes.

\begin{question}
\label{que.1}
Is it true that the \HOMFLY\ polynomial of a knot colored by $p_a$  is a
$q$-holonomic function of $a$?
\end{question}


\section{Categories, links and their invariants}
\label{sec.3cats}

Throughout the paper, $n$ will denote a natural number greater than or equal
to $2$. We will denote by $\Qqn$ the field of rational functions in an
indeterminate $q^{1/n}$, and $\Qq$ its subfield generated by $q= (q^{1/n})^n$.
Also $\Zq\subset \Qq$ will denote the ring of Laurent polynomial in $q$
with integer coefficients.

In this section we will discuss three categories $\nRep$, $\nWeb$ and
$\nLad$ which are connected by functors

\be
\label{eq.3cats}
\nLad \stackrel{\Psi_n}{\to} \nWeb \stackrel{\Gamma_n}{\to} \nRep
\ee

A ring homomorphism $f:\Qqn\to \Qqn$ (thought of as a homomorphism from
the empty set to the empty set), is the multiplication by a scalar, and
we denote this scalar by $\ev(f)\in \Qqn$.

These categories are intimately related to  diagrammatic descriptions
of framed tangles and of quantum groups.

\subsection{The quantized enveloping algebras $\Uqgn$ and $\Uqsn$}
\label{sub.Uqsln}

Consider the lattice $\BZ^n$ with the standard Euclidean inner product
$\langle \cdot, \cdot \ra$, and the root vectors
$$
\al_i=(0,\dots,0, 1, -1, 0 , \dots, 0)\in \BZ^n\,,
$$
with $1$ on the $i$-th position.

The quantized enveloping algebra $\Uqgn$ is the associative algebra over
$\Qq$ generated by $L_i, i=1,\dots, n$ and $E_i, F_i, i=1,\dots,n-1$,
subject to the relations

\begin{gather*}
L^a L^b= L^{a+b}, \quad L_{0} = 1  \\
L^a  E_j= q^{a_j - a_{j+1}} E_j L^a, \quad L^a F_j
= q^{a_{j+1} - a_{j}} F_j L^a \\
E_i^{(2)} E_{i+1} - E_i E_{i+1} E_i + E_{i+1} E_i^{(2)}
=0=F_i^{(2)}  F_{i+1} - F_i F_{i+1} F_i + F_{i+1} F_i^{(2)}\\
E_i F_j - E_j F_i = \delta_{ij} \frac{K_i-K_i^{-1}}{q-q^{-1}}\\
E_i E_j= E_j E_i, \quad F_i F_j = F_j F_i \quad \text{for } \ |i-j| >1.
\end{gather*}
Here $L^a= L_1^{a_1} \dots L_n^{a_n}$ for $a=(a_1,\dots,a_n)\in \BZ^n$,
$K_i= L_i L^{-1}_{i+1}$, and
$$
\Eir = E_i^r/[r]!, \Fir = F_i^r/[r]!, \quad
\text{where} \ [r]!:= \prod_{j=1}^r \frac{q^j-q^{-j}}{q-q^{-1}}.
$$

There is a structure of a Hopf algebra on $\Uqgn$ with the co-product and
the antipode, see e.g. \cite{Jantzen,CP,Lu}.

The quantized enveloping algebra $\Uqsn$ is the subalgebra of $\Uqgn$
generated by $E_i, F_i, K_i^{\pm 1}$, $ i=1,\dots,n-1$. Then $\Uqsn$
inherits a Hopf algebra structure from that of $\Uqgn$.

{\em A weight of $\Uqgn$} (resp., $\Uqsn$) is an element $a\in \BZ^n$
(resp., an element $a\in \BZ^n$ such that $\sum _i a_i=0$).
A $\Uqsn$-module $V$ is called a {\em weight module} (or perhaps better,
a weighted module) if
$V= \bigoplus_{a} V_{[a]}$, where each $a$ is a $\Uqsn$-weight and
$$
V_{[a]} = \{ v \in V \mid K_i(v) = q^{\langle \al_i, a \ra } v\}.
$$

For a partition $\lambda=(l_1,\dots,l_\ell)$ with
$l_1 \ge l_2\ge \dots \ge l_\ell >0$ we call $\ell=\len(\lambda)$ the length of
$\lambda$ and $|\lambda|=\sum_i l_i$ the weight of $\lambda$. Denote by
$\lambda^\dagger$ the conjugate of $\lambda$, which is the partition whose
Young diagram is the transpose of that of $\lambda$. For a thorough treatment
of partitions, see ~\cite{Mc}.
Finite-dimensional irreducible weight $\Uqsn$-modules are parameterized
by partitions $\lambda\in \cP_{n-1}$, i.e.  partitions of  length
$\le n-1$, see e.g \cite{CP,Jantzen}. For every  $\lambda\in \cP_{n-1}$
denote by  $V_\lambda$ the corresponding irreducible weight $\Uqsn$-module.

\subsection{The category of $U_q(\fsl_n)$-modules and link invariants}

The category $\ntRep$ of finite-dimensional weight $U_q(\fsl_n)$-modules is
a {\em ribbon category} \cite{Tu2}, where the braiding comes from the
universal $R$-matrix. To be precise, one needs to extend the ground field
to $\Qqn$ so that the braiding and the ribbon element can be defined.

By the theory of ribbon categories, for a framed oriented link $L$ in
3-space with $r$ ordered components and
$r$ objects $V_1,\dots, V_r$  of $\ntRep$, one can define an invariant
$$
J_L^{\fsl_n}(V_1,\dots,V_r) \in \Qqn.
$$
If $\lambda_1,\dots,\lambda_r\in \cP_{n-1}$,
we use the notation
$$
J_L^{\fsl_n}(\lambda_1,\dots,\lambda_r) =
J_L^{\fsl_n}(V_{\lambda_1},\dots,V_{\lambda_r}).
$$
It is known that a properly normalized version of $J_L^{sl_n}(V_1,\dots,V_r)$
belongs to $\Zqt$, see~\cite{MW,Le:Integral}. A special case of this
integrality phenomenon is the following.
Let $\ell_{ij}$ be the linking number between the $i$-th and the
$j$-component of $L$, with $\ell_{ii}$ the framing of the $i$-th component.
Define
\be
\label{eq.tJ}
\tJ_{L}^{\fsl_n}(\lambda_1,\dots, \lambda_r)=
q^{\frac 1n \sum_{i, j} \ell_{ij} |\lambda_i| |\lambda_j|}
J_L^{\fsl_n}(\lambda_1,\dots,\lambda_r).
\end{equation}
Then we have
\be
\label{eq.integral}
\tJ_L^{\fsl_n}(\lambda_1,\dots, \lambda_r)\in \Zq.
\ee

Not only is $\tJ_L^{\fsl_n}(\lambda_1,\dots, \lambda_r)$ a Laurent
polynomial in $q$, but it also enjoys the following stability (with
respect to the rank $n$) property.

\begin{proposition}[\cite{Wenzl}]
\label{r.unified}
There exists an invariant $W_L(\lambda_1,\dots,\lambda_r)\in \Qxq$ such that
for any $n$ greater than the length of any of $\lambda_j$, we have
$$
W_L(\lambda_1,\dots,\lambda_r)|_{x= q^n} =
\tJ_L^{\fsl_n}(\lambda_1,\dots, \lambda_r).
$$
\end{proposition}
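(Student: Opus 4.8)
The plan is to prove stability of the normalized invariant $\tJ_L^{\fsl_n}$ in the rank $n$ by exhibiting it, for $n$ large relative to the lengths $\len(\lambda_j)$, as the specialization $x=q^n$ of a single element $W_L(\lambda_1,\dots,\lambda_r)\in\Qxq$. The natural starting point is the Reshetikhin--Turaev recipe: present $L$ as the closure of a braid $\beta$, decompose $\beta$ into elementary pieces (cups, caps, and positive/negative crossings), and assign to each piece a morphism of $\Uqsn$-modules built from the $R$-matrix, its inverse, and the (co)evaluation maps coming from the ribbon structure on $\ntRep$. The invariant $J_L^{\fsl_n}(\lambda_1,\dots,\lambda_r)$ is then the evaluation scalar $\ev$ of the resulting endomorphism of the trivial module. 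First I would compute each elementary morphism in a weight basis and track how its matrix entries depend on $n$, with the aim of showing that every entry is, after the normalization \eqref{eq.tJ} is distributed appropriately, a Laurent polynomial in $q$ and $x=q^n$ evaluated at $x=q^n$.

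The key organizing idea is to route the computation through the web category $\nWeb$ and the functors in \eqref{eq.3cats}, so that the braiding and (co)evaluation maps are expressed in terms of $\fsl_n$-webs whose edge labels record $\fsl_n$-exterior powers. The crucial structural fact is that, once $n$ exceeds the lengths of all the $\lambda_j$, the combinatorics of admissible webs representing the relevant morphism spaces \emph{stabilizes}: the set of weight labels that can occur, and the relations among them, no longer depend on $n$, and the only place $n$ enters the evaluation of a closed web is through quantum dimensions and quantum integers of the form $[n+c]$ for integers $c$ determined by the diagram and the partitions, not by $n$. Each such quantum integer is $(q^{n+c}-q^{-(n+c)})/(q-q^{-1})$, which is manifestly the value at $x=q^n$ of the element $(q^c x - q^{-c} x^{-1})/(q-q^{-1})\in\Qxq$. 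Replacing every occurrence of $q^{\pm n}$ by $x^{\pm 1}$ throughout the stabilized web evaluation then produces a well-defined element $W_L(\lambda_1,\dots,\lambda_r)\in\Qxq$ whose specialization at $x=q^n$ recovers $\tJ_L^{\fsl_n}$; the integrality \eqref{eq.integral} guarantees that no fractional powers of $q$ survive and that the normalizing prefactor $q^{\frac1n\sum_{i,j}\ell_{ij}|\lambda_i||\lambda_j|}$ is exactly what is needed to clear the denominators coming from the $R$-matrix eigenvalues, so that the $x$-dependence is polynomial in $x^{\pm1}$.

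The steps in order are: (i) fix a braid presentation and the associated composite morphism; (ii) pass through $\Gamma_n$ and $\Psi_n$ to rewrite this morphism as a linear combination of closed $\fsl_n$-webs with labels bounded by $\len(\lambda_j)$; (iii) observe that for $n$ large the combinatorial data (labels, relations, basis of the relevant $\Hom$-space) is independent of $n$, so the web evaluation is a fixed $\Qq$-linear combination of products of quantum integers $[n+c]$ and quantum binomials of such; (iv) substitute $x=q^n$ in reverse, i.e.\ define $W_L$ by replacing each $[n+c]$ with its $x$-deformation and each $q^{\pm n}$ prefactor likewise; (v) check well-definedness (independence of the braid presentation) by verifying that the Reidemeister/Markov moves hold at the level of these $x$-deformed web evaluations, which follows because each move is an identity of rational functions in $q$ and $q^n$ that holds for all large $n$ and hence as an identity in $\Qxq$.

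The main obstacle I anticipate is step (iii): proving that the $\Hom$-spaces and web relations genuinely stabilize, i.e.\ that no new $\fsl_n$-web relation (such as a square-switch or a circle-removal relation whose coefficient involves $[n]$) can force a collapse that changes the \emph{form} of the answer once $n>\max_j\len(\lambda_j)$. This is exactly the rank-stability of the representation category, and handling it cleanly is what the passage to $\nWeb$ and $\nLad$ in the later sections is designed to make rigorous; the honest content of the proposition is that the only $n$-dependence is through the harmless substitution $q^n\mapsto x$, and everything else is uniform.
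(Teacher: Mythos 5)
Your high-level strategy --- show that the only $n$-dependence of $\tJ_L^{\fsl_n}$ is through the substitution $q^n\mapsto x$, then define $W_L$ by reading the substitution backwards --- is indeed how Proposition~\ref{r.unified} follows from the paper's proof of Theorem~\ref{thm.1}. But your step (iii), which you yourself flag as the main obstacle, is the entire content of the statement, and in the form you assert it, it is not true. The $\fsl_n$-web relations and evaluations do \emph{not} stabilize for large $n$: a closed circle labelled $a$ evaluates to $\qbinom{n}{a}$, and the tag relation \eqref{eq.switch} contributes signs $(-1)^{a(n-a)}$ that depend on the \emph{parity} of $n$ and hence are not functions of $q^n$ at all. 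What must actually be proven is a cancellation statement: after the normalization \eqref{eq.tJ}, all of this $n$-dependence assembles into Laurent polynomials in $x=q^n$. The paper does not prove this by a stabilization argument in $\nWeb$; it passes to the ladder category $n\Lad_{2m}$, which has no duals and no tags (the closure of the braid is implemented by explicit monomials in $E_i^{(a)}$ and $F_i^{(a)}$, so the parity signs never enter), writes the closed colored link as $1_\vartheta H(a) 1_\vartheta$ with $H$ a special function built from the Lusztig braiding operators \eqref{eq.tau+}--\eqref{eq.tau-} (Proposition~\ref{r.main0}), and then runs an explicit Poincar\'e--Birkhoff--Witt sorting algorithm (Proposition~\ref{prop.tech1}). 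In that algorithm the rank $n$ enters only through quantum binomials $\qbinom{n+A}{B}$, which are by inspection the specializations at $x=q^n$ of the two-variable binomials $\qbinom{x;A}{B}$ of \eqref{eq.qbn1}; this is where ``the only $n$-dependence is $q^n\mapsto x$'' is actually established, yielding Theorem~\ref{thm.2}. Note also that your step (v) is unnecessary: once each specialization $x=q^n$ is the invariant $\tJ_L^{\fsl_n}$, invariance and well-definedness of $W_L$ follow from the uniqueness Lemma~\ref{r.uni1} (a Laurent polynomial in $x$ is determined by infinitely many evaluations), with no Reidemeister or Markov verification at the $x$-deformed level.

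There is a second gap in step (ii): the objects of $\nWeb$ are sequences of exterior-power colors, so a component of $L$ colored by a general partition $\lambda_j$ with several rows is not represented by a web at all, and your bound ``labels bounded by $\len(\lambda_j)$'' conflates the two. The paper closes this gap by first proving the statement for one-column colors $e_a$ (Theorem~\ref{thm.2}, via the ladder algorithm above) and then reducing general partitions to that case by cabling together with the Jacobi--Trudi determinant \eqref{eq.JT}, using parts (b) and (c) of Proposition~\ref{r.WL}: $W_L(\lambda,\dots)$ is a signed sum over permutations of $W_{\Delta L}(e_{\lambda^\dagger_i+\sigma(i)-i},\dots)$ for the cabled link $\Delta L$. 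The hypothesis that $n$ exceeds the lengths of the $\lambda_j$ is exactly what keeps all the one-column colors appearing in this reduction in the admissible range $[0,n-1]$, so without this step your argument, even granting (iii), proves the proposition only for fundamental colors.
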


$W_L$ is usually called the colored \HOMFLY\ function. The theorem was
first proved by Wenzl using quantum group theory. For a detailed proof
using skein theory see \cite[Theorem 11.4.18]{Lucak}. The theorem also
follows from our proof of Theorem \ref{thm.1} below. For the simplest case,
when all partitions have one box, Proposition \ref{r.unified} was first
proved by Turaev \cite{Tu1}.

\begin{remark}
The integrality \eqref{eq.integral} shows that the polynomial
$P=W_L(\lambda_1,\dots,\lambda_r)\in \Qxq$ has the property that
$P|_{x = q^n}\in \Zq$ for all integer $n >1$. Such a polynomial
$P\in \Qxq$ is called {\em $q$-integral} and is studied in
\cite[Sec. 2.3]{BCL}.
\end{remark}

\begin{remark}
\label{rem.2}
Our $W_L(\lambda_1,\dots,\lambda_r)$ is equal to
$P(L * (Q_{\lambda_1}, \dots, Q_{\lambda_r}))$ in the notation of
\cite[Section 6]{Mo2}, with our $q$ and $x$ equal to  respectively $s$
and $v^{-1}$ there.
\end{remark}

\subsection{Properties of the colored \HOMFLY\ polynomial}

Let $\La$ be the free $\BQ$-vector space with basis the set $\cP$ of all Young
diagrams, including the empty one. Suppose $L$ is a framed oriented link
with $r$ ordered components. The invariant $W_L(\lambda_1,\dots,\lambda_r)$ can
be extended to a $\BQ$-multi-linear map
$$
W_L: \La^r \to \Qxq.
$$
There is a $\BQ$-algebra structure on  $\La$ which makes it isomorphic to
the algebra of symmetric functions, see e.g. \cite{Mc}. Under this
isomorphism, a Young diagram $\lambda$ is mapped to the Schur function
$S_\lambda$ corresponding to $\lambda$.

We collect here some well-known properties of the quantum invariant $W_L$.

\begin{proposition}
\label{r.WL}
Let $L$ be a framed oriented link in the 3-space with $k$ ordered
components.
\newline
\rm{(a)}
Suppose $L'$ is the same $L$ with the components renumbered by a
permutation $\sigma$ of $\{1,\dots, k\}$. Then
$$
W_{L'}(\lambda_1,\dots,\lambda_r) =
W_L(\lambda_{\sigma 1},\dots,\lambda_{\sigma r}).
$$
\rm{(b)}
Suppose $\Delta L$ is the result of replacing the first component of $L$ by
two copies of its parallel push-off (using the framing). Then
$$
W_{\Delta L}(\lambda_1', \lambda_1'', \lambda_2, \dots, \lambda_r) =
W_L(\lambda_1  \lambda_1', \lambda_2, \dots, \lambda_r).
$$
\rm{(c)}
We have:
\be
W_{L}(\lambda_1, \dots, \lambda_r)=
W_{L}(\lambda_1^\dagger, \dots, \lambda_r^\dagger)|_{q\to -q^{-1}}.
\ee
\end{proposition}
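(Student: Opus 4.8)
The plan is to prove the three properties of $W_L$ by establishing them first for the $\fsl_n$-invariants $\tJ_L^{\fsl_n}$ at every sufficiently large $n$, and then transferring them to $W_L$ via the interpolation of Proposition~\ref{r.unified}. The key observation is that a polynomial identity in $\Qxq$ that holds at infinitely many specializations $x=q^n$ must hold identically; thus it suffices to verify each identity after setting $x=q^n$ for all large $n$. Parts (a) and (b) are then essentially formal consequences of the structure of the Reshetikhin--Turaev functor, while part (c) requires genuine representation-theoretic input.

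For part (a), I would note that renumbering the components of $L$ by $\sigma$ does not change the underlying framed oriented link, only the labeling of which color is attached to which component. Since $J_L^{\fsl_n}(V_1,\dots,V_r)$ is defined by the ribbon-category evaluation of the labeled tangle, permuting the labels and the colors simultaneously leaves the invariant unchanged; this gives the identity for $\tJ_L^{\fsl_n}$ directly from \eqref{eq.tJ}, since the quadratic form $\sum_{i,j}\ell_{ij}|\lambda_i||\lambda_j|$ is symmetric under the simultaneous permutation of indices and linking data. For part (b), I would appeal to the fact that $\Delta L$ replaces one component by two parallel push-offs, so its Reshetikhin--Turaev invariant colors that doubled strand by $V_{\lambda_1'}\otimes V_{\lambda_1''}$. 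The identity then reduces to the cabling formula: a single strand colored by the tensor product $V_{\lambda_1'}\otimes V_{\lambda_1''}$ evaluates identically to the doubled strand, and after decomposing into irreducibles and matching with the product structure on $\La$ (where multiplication corresponds to tensor product of representations, via the Schur-function isomorphism), the multilinear extension yields $W_L(\lambda_1'\lambda_1'',\lambda_2,\dots,\lambda_r)$. The framing correction in \eqref{eq.tJ} must be checked to be compatible, since doubling a component multiplies its self-linking contribution, but this is a routine bookkeeping computation on the quadratic form.

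Part (c) is the substantive one, and I expect it to be \textbf{the main obstacle}. The statement relates the invariant colored by $\lambda_i$ to the invariant colored by the transposes $\lambda_i^\dagger$ under the substitution $q\to -q^{-1}$. The underlying mechanism is the well-known duality between the symmetric and exterior algebras, or equivalently the relationship between $V_\lambda$ and $V_{\lambda^\dagger}$ under the level-rank or Schur--Weyl duality, together with the behavior of the $R$-matrix under $q\to -q^{-1}$. I would try to prove this at the level of $\tJ_L^{\fsl_n}$ by invoking the symmetry of the quantum $\fsl_n$ link invariants under transposition of Young diagrams, which is a classical fact recorded in the skein-theoretic treatments (e.g.\ \cite{Mo1,Mo2,Lucak}); the substitution $q\to -q^{-1}$ arises because transposing a partition exchanges the roles of the eigenvalues of the braiding, whose signs are governed by the symmetric-versus-antisymmetric decomposition. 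The delicate point is to track the framing normalization: since $|\lambda|=|\lambda^\dagger|$, the prefactor in \eqref{eq.tJ} is invariant under transposition, so no extra correction enters, and the bare identity for $J_L^{\fsl_n}$ propagates cleanly to $\tJ_L^{\fsl_n}$ and hence to $W_L$.

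Alternatively, and perhaps more cleanly, all three identities can be derived directly from Remark~\ref{rem.2}, which identifies $W_L$ with Morton's invariant $P(L*(Q_{\lambda_1},\dots,Q_{\lambda_r}))$ under an explicit change of variables. In that framework properties (a)--(c) are standard symmetries of the \HOMFLY\ skein of the annulus: permutation invariance is immediate, the product rule (b) follows from the algebra structure on the skein of the annulus together with the satellite formula, and the transposition symmetry (c) is the image of the involution on symmetric functions that sends $S_\lambda\mapsto S_{\lambda^\dagger}$ and corresponds on the invariant side to $q\to -q^{-1}$. I would choose whichever route keeps the framing bookkeeping most transparent, and I expect the transposition symmetry to demand the most care regardless of the route taken.
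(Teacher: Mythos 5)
Your ``alternative'' route is in fact the paper's own proof (Appendix~\ref{app.c}). There $W_L$ is expressed skein-theoretically as $W_L(\lambda_1,\dots,\lambda_r)=\la L \ast (Q_{\lambda_1},\dots,Q_{\lambda_r})\ra$ (equation~\eqref{eq.WLa}); part (b) follows at once, since decorating two parallel push-offs of a component by $Q_{\lambda_1'}$ and $Q_{\lambda_1''}$ is the same skein element as decorating the original component by the product $Q_{\lambda_1'}Q_{\lambda_1''}$, and multiplication in the annulus skein corresponds to multiplication of Schur functions; and part (c) is proved by introducing the $\BQ$-algebra involution $\sigma$ with $\sigma(q)=-q^{-1}$, $\sigma(x)=x$, extended to the skein by fixing every link diagram, together with the key lemma $\sigma(Q_\lambda)=Q_{\lambda^\dagger}$ (equation~\eqref{eq.Qlambda2}). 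That lemma is exactly what you gesture at when you say the involution ``corresponds on the invariant side to $q\to -q^{-1}$'', and it is the one piece of genuine content your sketch still owes: the paper proves it from the Morton--Aiston braid-closure formulas~\eqref{eq.Qaa} for $Q_{(a)}$ and $Q_{(1^a)}$, and reduces general $\lambda$ to the one-row case via the two Jacobi--Trudi determinants.

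Your primary route, however --- proving each identity for $\tJ_L^{\fsl_n}$ at all large $n$ and interpolating --- works for (a) and (b) but has a genuine gap at (c), located precisely at the sentence ``the bare identity for $J_L^{\fsl_n}$ propagates cleanly \dots to $W_L$.'' Identity (c) keeps $x$ fixed while sending $q\mapsto -q^{-1}$, and this substitution does not preserve the specialization locus $x=q^n$: evaluating the right-hand side of (c) at $x=q^n$ gives $W_L(\lambda_1^\dagger,\dots,\lambda_r^\dagger)$ at the point $(x,q)=(q^n,-q^{-1})$, whereas quantum groups compute that polynomial only at points where $x$ is the $n$-th power of the quantum parameter, i.e.\ $x=(-q^{-1})^n=(-1)^nq^{-n}\neq q^n$. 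So agreement of the two sides of (c) at $x=q^n$ is not a statement about $\fsl_n$ invariants at all, and there is no fixed-$n$ transposition symmetry to invoke; indeed none exists in the naive form, both because $\lambda\in\cP_{n-1}$ does not imply $\lambda^\dagger\in\cP_{n-1}$ (transpose a single row with more than $n-1$ boxes), and because of signs: for the $0$-framed unknot, $\tJ_U^{\fsl_n}(e_1)(q)=[n]$ while $[n]|_{q\to -q^{-1}}=(-1)^{n+1}[n]$, even though $e_1^\dagger=e_1$. Part (c) is intrinsically a statement about the stable two-variable invariant, which is why the paper proves it in the skein of the annulus rather than rank by rank; to complete your write-up you should promote your ``alternative'' to the actual argument and supply the lemma $\sigma(Q_\lambda)=Q_{\lambda^\dagger}$.
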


Parts (a) and (b) follow from the corresponding properties for $J_L$,
see~\cite{Tu2}. While (a) is trivial, (b) follows from the hexagon equation
of the braiding in the braided category.
Part (c) is well-known and has been discussed in many papers, see
e.g.~\cite[Equ. 4.41]{LMOV}. For completeness, we give proofs of parts (b)
and (c) in Appendix \ref{app.c}.

\subsection{The category $\nRep$}
\label{sub.nRep}

Let $e_a$ be the partition whose Young diagram is a column with $a$
boxes, i.e. $e_a= (1^a)$ in the standard notation of partitions.
The $\Uqsn$-module $V_{e_a}$ with $ 1 \le a \le n-1$ is called a
{\em fundamental $\Uqsn$-module}. We also use $e_0$ to denote the
empty Young diagram, which corresponds to the trivial $\Uqsn$-module.

Let $\nRep$ be the full subcategory of $\ntRep$  whose objects are those
isomorphic to tensor products of the fundamental $\Uqsn$-modules.
Then $\nRep$ inherits a ribbon category structure from $\ntRep$.

The advantage of $\nRep$ is that it has a remarkable presentation using
planar diagrams called spider webs described in the next section. Since
$\ntRep$ is the idempotent completion of $\Rn$, we don't loose much
working with $\Rn$.

\subsection{The category $\nWeb$}
\label{sub.nwebs}

We describe here the category $\nWeb$ of $\fsl_n$-webs, following
Cautis-Kamnitzer-Morrison~\cite{CKM}.
Recall that a pivotal monoidal category is a category with tensor products
and a coherent notion of duality in which the double dual functor is
naturally isomorphic to the identity. The morphisms and the relations
among morphisms of such categories afford a diagrammatic description using
planar diagrammatics. They are essentially equivalent to the
description of the Temperley-Lieb algebra for $n=2$ and to Kuperberg's
spider webs~\cite{Kuperberg} (for $n=3$) and the planar algebras of
Jones~\cite{Jones2}. They are also closely related to the MOY graphs of
Murakami, Ohtsuki, and Yamada~\cite{MOY}. Standard references for pivotal
categories include the books \cite[Chpt.XI]{Tu2}, \cite{Kassel} as
well~\cite[Chpt.4.7]{Etingof-tensor}.

An {\em $n$-web} is a compact subset $Z$ of the horizontal strip
$\BR\times [0,1]$ with additional data satisfying (i)-(iii).
\begin{itemize}
\item[(i)]
Each connected component of $X$ is  either an oriented circle or a directed
graph  (i.e. a finite 1-dimensional CW-complex) where the degree of each
vertex is 1, 2, or 3. Every circle component and every edge is labeled by
an integer in $[1,n-1]$.
\item[(ii)]
The set $\partial Z$ of univalent vertices of $Z$ is in the union of the
top and bottom lines of the strip and $Z\setminus \partial Z$ is in the
interior of the strip.
\item[(iii)]
Up to isotopy there are 2 types of trivalent vertices and 2 types of
bivalent vertices as in the following figure (with labeling of edges
attached to the vertex):
\begin{equation}
\label{webgen}
\xy
(0,0)*{ \begin{tikzpicture} [scale=.75, decoration={markings,
                        mark=at position 0.6 with {\arrow{>}};    }]
\draw[very thick, postaction={decorate}] (0,0) -- (0,1);
\draw[very thick, postaction={decorate}] (.875,-.5) -- (0,0);
\node at (1.2,-.5) {\small$b$};
\node at (0,1.3) {\small$a+b$};
\draw[very thick, postaction={decorate}] (-.875,-.5) -- (0,0);
\node at (-1.2,-.5) {\small$a$};
\end{tikzpicture}};
\endxy
\quad , \quad
\xy
(0,0)*{
\begin{tikzpicture} [scale=.75, decoration={markings,
                        mark=at position 0.6 with {\arrow{>}};    }]
\draw[very thick, postaction={decorate}] (0,1) -- (0,0);
\node at (0,1.3) {\small$a+b$};
\draw[very thick, postaction={decorate}] (0,0) -- (.875,-.5);
\node at (-1.2,-.5) {\small$a$};
\draw[very thick, postaction={decorate}] (0,0) -- (-.875,-.5);
\node at (1.2,-.5) {\small$b$};
\end{tikzpicture}};
\endxy
\quad , \quad
\xy
(0,0)*{
\begin{tikzpicture} [scale=.75, decoration={markings,
                        mark=at position 0.6 with {\arrow{>}};    }]
\draw[very thick, postaction={decorate}] (0,-.5) -- (0,.25);
\node at (0,-.8) {\small$a$};
\draw[very thick, postaction={decorate}] (0,1) -- (0,.25);
\node at (0,1.3) {\small$n-a$};
\draw[very thick] (0,.25) -- (.2,.25);
\end{tikzpicture}};
\endxy
\quad , \quad
\xy
(0,0)*{
\begin{tikzpicture} [scale=.75, decoration={markings,
                        mark=at position 0.6 with {\arrow{>}};    }]
\draw[very thick, postaction={decorate}] (0,.25) -- (0,-.5);
\node at (0,-.8) {\small$a$};
\draw[very thick, postaction={decorate}] (0,.25) -- (0,1);
\node at (0,1.3) {\small$n-a$};
\draw[very thick] (0,.25) -- (.2,.25);
\end{tikzpicture}};
\endxy
\end{equation}
The third and the fourth graphs depict bivalent vertices but not
trivalent vertices, as the small tag there is not officially an edge.
The tag  provides a distinguished side and makes the bivalent vertices
not rotationally symmetric.
\end{itemize}

We will declare isotopic webs to be equal.

Let $\partial_-Z=(i_1^{\ve_1}, \dots i_k^{\ve_k})$, where $i_1,\dots,i_k$ are
the labels of the edges ending on the bottom line listed from left to right,
and $\ve_j=+$ if the orientation at the $j$-th ending point is upwards and
$\ve_j=-$ otherwise. One defines $\partial_+ Z$  exactly the same way, using
the top line instead of the bottom line.

The category $\nWeb$ is the pivotal monoidal $\Qqn$-linear category
whose objects are sequences in the symbols $\{1^{\pm},\ldots,(n-1)^{\pm}\}$.
Given objects $a, b$ of $\nWeb$, the set of morphisms $\Hom_{\nWeb}(a,b)$
is the set of $\Qqn$-linear combinations of
$n$-webs $Z$ such that $\partial_-Z=a, \partial_+ Z= b$, subject to
certain local relations described in
\cite[Section 2.2]{CKM}. In \cite{CKM}, our $\nWeb$ is
denoted by ${\mathcal Sp} (\mathrm{SL}_n)$.
The tensor product $Z_1 \ot Z_2$ is obtained by placing $Z_2$ to the
right of $Z_1$. The composition $Z_1Z_2$ is the result of placing $Z_1$
atop $Z_2$, after an isotopy to make the top ends of $Z_2$ match the
bottom ends of $Z_1$.

For example, the first diagram in  \eqref{webgen} represents a morphism
from $a^+ \ot b^+=(a^+, b^+) \to (a+b)^+$, and the second one represents
a morphism from $a^- \ot b^- \to (a+b)^-$.

The monoidal unit $\nWeb$ is the empty sequence.  The planar isotopy
condition implies that the object $a^+$ is dual to the object $a^-$.
The cap and cup morphisms

\begin{equation}
\label{eq.capcup}
\begin{tikzpicture} [scale=.75, decoration={markings,
                        mark=at position 0.6 with {\arrow{>}};    }]
\draw[very thick, postaction={decorate}]
(0,.25) to[out = 0, in =90](.875,-.5);
\draw[very thick, postaction={decorate}]
(-.875,-.5) to[out=90, in =180] (0,.25);
\node at (0,.65) {\small$a$};
\end{tikzpicture}
\qquad  \qquad \quad
\hackcenter{ \begin{tikzpicture} [ xscale=.75, yscale=-.75,
decoration={markings, mark=at position 0.6 with {\arrow{<}};    }]
\draw[very thick, postaction={decorate}]
(.875,-.5) to[out = 90, in =0] (0,.25);
\draw[very thick, postaction={decorate}]
(0,.25)to[out=180, in =90] (-.875,-.5);
\node at (0,.65) {\small$a$};
\end{tikzpicture}}
\end{equation}
give rise to maps $a^+ \otimes a^- \to \emptyset$ and
$\emptyset \to a^- \otimes a^+$ that realize this duality.

For simplicity we allow diagrams to carry labels of $0$ and $n$ with
the understanding  that $n$-labelled edges connected to a trivalent vertex
should be deleted and replaced by a tag as in the cap and cup diagrams:

\begin{equation}
\label{eq_tag_cup}
\hackcenter{ \begin{tikzpicture} [scale=.75, decoration={markings,
                        mark=at position 0.6 with {\arrow{>}};    }]
\draw[very thick, postaction={decorate}] (0,0) -- (0,1);
\draw[very thick, postaction={decorate}] (.875,-.5) -- (0,0);
\node at (1.1,-.7) {\small$n-a$};
\node at (0,1.3) {\small$n$};
\draw[very thick, postaction={decorate}] (-.875,-.5) -- (0,0);
\node at (-1.2,-.7) {\small$a$};
\end{tikzpicture}}
\;\; = \;\;
\hackcenter{ \begin{tikzpicture} [scale=.75, decoration={markings,
                        mark=at position 0.6 with {\arrow{>}};    }]
\draw[very thick] (0,.25) -- (0,.5);
\draw[very thick, postaction={decorate}]
(.875,-.5) to[out = 90, in =0] (0,.25);
\node at (1.1,-.7) {\small$n-a$};
\draw[very thick, postaction={decorate}]
(-.875,-.5) to[out=90, in =180] (0,.25);
\node at (-1.1,-.7) {\small$a$};
\end{tikzpicture}}
\quad  \quad
\hackcenter{ \begin{tikzpicture} [rotate=180, scale=.75, decoration={markings,
                        mark=at position 0.6 with {\arrow{<}};    }]
\draw[very thick, postaction={decorate}] (0,0) -- (0,1);
\draw[very thick, postaction={decorate}] (.875,-.5) -- (0,0);
\node at (1.1,-.7) {\small$n-a$};
\node at (0,1.3) {\small$n$};
\draw[very thick, postaction={decorate}] (-.875,-.5) -- (0,0);
\node at (-1.2,-.7) {\small$a$};
\end{tikzpicture}}
\;\; = \;\;
\hackcenter{ \begin{tikzpicture} [rotate=180, scale=.75, decoration={markings,
                        mark=at position 0.6 with {\arrow{<}};    }]
\draw[very thick] (0,.25) -- (0,0);
\draw[very thick, postaction={decorate}]
(.875,-.5) to[out = 90, in =0] (0,.25);
\node at (1.1,-.7) {\small$n-a$};
\draw[very thick, postaction={decorate}]
(-.875,-.5) to[out=90, in =180] (0,.25);
\node at (-1.1,-.7) {\small$a$};
\end{tikzpicture}}
\end{equation}
and the remaining edges and loops labeled $0$ or $n$ should be deleted.

Note that the cap and cup diagrams coming from the duality $a^+$
with $a^-$ arising from the pivotal structure do not require tags.

The followings  are consequences of the relations among generators of
$\fsl_n$-webs:

\begin{align}
\label{eq.switch}
\xy
(0,0)*{
\begin{tikzpicture} [scale=.75, decoration={markings,
                        mark=at position 0.6 with {\arrow{>}};    }]
\draw[very thick, postaction={decorate}] (0,-.5) -- (0,.25);
\node at (0,-.8) {\small$a$};
\draw[very thick, postaction={decorate}] (0,1) -- (0,.25);
\node at (0,1.3) {\small$n-a$};
\draw[very thick] (0,.25) -- (.2,.25);
\end{tikzpicture}};
\endxy
& = (-1)^{a(n-a)}
\xy
(0,0)*{
\begin{tikzpicture} [scale=.75, decoration={markings,
                        mark=at position 0.6 with {\arrow{>}};    }]
\draw[very thick, postaction={decorate}] (0,.25) -- (0,-.5);
\node at (0,-.8) {\small$a$};
\draw[very thick, postaction={decorate}] (0,.25) -- (0,1);
\node at (0,1.3) {\small$n-a$};
\draw[very thick] (0,.25) -- (.2,.25);
\end{tikzpicture}};
\endxy
\\ \label{eq:cancel-tags}
\xy
(0,0)*{
\begin{tikzpicture} [scale=.75]
\draw[very thick, directed=.55] (0,-.5) -- (0,.25);
\node at (0,-.8) {\small$a$};
\node at (0,2.8) {\small$a$};
\draw[very thick, directed=.45] (0,1) -- (0,.25);
\draw[very thick,directed=.75] (0,1) -- (0,2.5);
\node at (-.9,1.0) {\small$n-a$};
\draw[very thick] (0,.25) -- (-.2,.25);
\draw[very thick] (0,1.5) -- (.2,1.5);
\end{tikzpicture}};
\endxy \;\; & = \quad
\xy
(0,0)*{
\begin{tikzpicture} [scale=.75]
\draw[very thick, directed=.55] (0,-.5) -- (0,2.5);
\node at (0,-.8) {\small$a$};
\node at (0,2.8) {\small$a$};
\end{tikzpicture}};
\endxy
\end{align}

\begin{remark}
The tags appearing in
$n$-web (which do not appear in \cite{MOY}) play an important role
keeping track of the fact that while $(V_{e_a})^*$ is isomorphic to
$V_{e_{n-a}}$, this isomorphism is not canonical. The tags in
$\fsl_n$-webs keep track of these isomorphisms and contribute
signs that would have otherwise been missed by wrongly identifying the
dual of $a^+$ with $(n-a)^+$.
\end{remark}

\subsection{An equivalence between $\nWeb$ and $\nRep$}

The main result of  \cite{CKM} is the construction of an equivalence,
which is a $\Qqn$-linear pivotal functor,
$$
\Gamma_n: \nWeb \to \nRep
$$
defined on objects by $\Gamma_n(a^+)=V_{e_a}$ and
$\Gamma_n(a^-)=(V_{e_a})^*$. The ribbon structure of $\nRep$ can be pulled
back to make $\nWeb$ a ribbon category. In particular, we have a braiding
$X_{a, b}: a \ot b \to b \ot a$ for any two objects
$a, b$ of $\nWeb$.
For simple objects $a,b \in [1,n-1]$ we use the diagrams with crossings
as in Figure \ref{fig:braiding} to denote the braiding $X_{a,b}$ its
inverse $X^{-1}_{b,a}$. The braiding allows us to introduce crossings in
diagrams representing morphisms of $\nWeb$.

\begin{figure}[!hptb]
\begin{center}
\[
X_{a,b} \;\; = \;\;
\xy
(0,0)*{
\begin{tikzpicture}
\draw[very thick, directed=1] (-.5,0) -- (.5,1.5);
\draw[very thick ] (.5,0) -- (.1,.6);
\draw[very thick, directed=1 ] (-.1,.9) -- (-.5,1.5);
\node at (-.55,-.3) {\small$a$};
\node at (.55,-.3) {\small$b$};
\node at (-.55,1.8) {\small$b$};
\node at (.55,1.8) {\small$a$};
\end{tikzpicture}};
\endxy
\qquad \quad
X_{b,a}^{-1} \;\; = \;\;
\xy
(0,0)*{
\begin{tikzpicture}
\draw[very thick, directed=1] (.5,0) -- (-.5,1.5);
\draw[very thick ] (-.5,0) -- (-.1,.6);
\draw[very thick, directed=1 ] (.1,.9) -- (.5,1.5);
\node at (.55,-.3) {\small$a$};
\node at (-.55,-.3) {\small$b$};
\node at (.55,1.8) {\small$b$};
\node at (-.55,1.8) {\small$a$};
\end{tikzpicture}};
\endxy
\]
\caption{The braiding $X_{a,b}$ (left) and its inverse $X^{-1}_{b,a}$.}
\label{fig:braiding}
\end{center}
\end{figure}
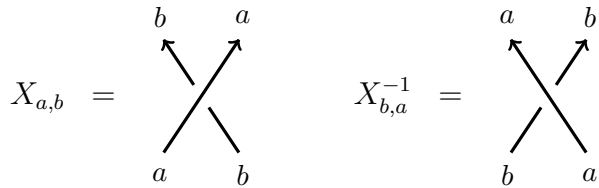

%

Suppose $D$ is a link diagram in the plane in general position with
respect to the height function, whose components are labeled by integers
in $[0,n-1]$. Then $D$ defines morphism in the category $\nWeb$ from
$\emptyset$ to $\emptyset$. Since $\Hom_{\nWeb}(\emptyset, \emptyset)=\Qqn$,
the morphism $D$ is determined by the scalar $\ev(D)\in\Qqn$.
The equivalence $\Gamma_n$ shows that this scalar $\ev(D)$ is equal to the
invariant $J_L^{\fsl_n}(e_{a_1}, \dots, e_{a_k})$, i.e.
\be
\label{eq.11}
J_L^{\fsl_n}(e_{a_1}, \dots, e_{a_k})= \ev(D),
\ee
where $L$ is the framed link whose blackboard diagram is $D$ and
$a_1,\dots, a_k$ are the labels of the components of $L$.

\subsection{The Ladder Category}
\label{sub.ladders}

We give the definition of ladder category $\Lad_m$, which a
diagrammatic presentation of Lusztig's idempotent form $\Uqgmd$ of the
quantum group $\Uqgm$. Typically, $\Uqgmd$ is regarded as a $\Qq$-algebra
where the unit is replaced by a system of mutually orthogonal idempotents
$1_{a }$ indexed by the weight lattice of $\fgl_m$. In \cite{CKM}, using
the quantum skew-Howe duality, they showed that there is a braided monoidal
functor from the ladder category to the category $\nWeb$. We explain  how to
use this result to calculate quantum $\Uqsn$-invariants of links using ladders.

\label{def.ladder}
{\em A ladder $Z$ with $m$ sides} is a uni-trivalent graph drawn in the
strip $\BR \times [0,1]$, with
\begin{itemize}
\item[(i)]
$m$ parallel vertical lines running from the bottom  line to the top
line of the strip, oriented upwards,
\item[(ii)]
some number of oriented horizontal  lines in the interior of the
strip $\BR \times [0,1]$, called steps, connecting adjacent sides,
\item[(iii)]
a labeling of each interval (steps or segments of sides) by integers,
such that the signed sum of the labels  at each trivalent vertex is zero.
Here the sign of each incoming vertex is positive, and the sign of each
outgoing vertex is negative.
\end{itemize}

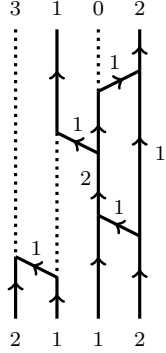
\begin{figure}[!hptb]
\begin{center}
\begin{tikzpicture} [rotate=-90,
decoration={markings, mark=at position 0.6 with {\arrow{>}}; },scale=.55]
\draw[very thick, dotted] (4.5,1) -- (8,1);
\draw[very thick, dotted] (2,0) -- (7.5,0);
\draw[very thick, dotted] (2,2) -- (3.5,2);
\draw[very thick, postaction={decorate}] (9,3) -- (7,3);
\draw[very thick, postaction={decorate}] (7,3) -- (3,3);
\draw[very thick, postaction={decorate}] (3,3) -- (2,3);
\draw[very thick, postaction={decorate}] (7,3) -- (6.5,2);
\draw[very thick, postaction={decorate}] (3.5,2) -- (3,3);
\draw[very thick, postaction={decorate}] (9,2) -- (6.5,2);
\draw[very thick, postaction={decorate}] (6.5,2) -- (5.5,2);
\draw[very thick, postaction={decorate}] (5.5,2) -- (3.5,2);
\draw[very thick, postaction={decorate}] (5,2) -- (4.5,1);
\draw[very thick, postaction={decorate}] (9,1) -- (8,1);
\draw[very thick, postaction={decorate}] (4.5,1) -- (2,1);
\draw[very thick, postaction={decorate}] (8,1) -- (7.5,0);
\draw[very thick, postaction={decorate}] (9,0) -- (7.5,0);
\node at (9.5,3) {$\scriptstyle 2$};
\node at (9.5,2) {$\scriptstyle 1$};
\node at (9.5,1) {$\scriptstyle 1$};
\node at (9.5,0) {$\scriptstyle 2$};
\node at (1.5,3) {$\scriptstyle 2$};
\node at (1.5,2) {$\scriptstyle 0$};
\node at (1.5,1) {$\scriptstyle 1$};
\node at (1.5,0) {$\scriptstyle 3$};
\node at (7.3,.5) {$\scriptstyle 1$};
\node at (6.3,2.5) {$\scriptstyle 1$};
\node at (5.6,1.7) {$\scriptstyle 2$};
\node at (4.3,1.5) {$\scriptstyle 1$};
\node at (2.8,2.4) {$\scriptstyle 1$};
\node at (5,3.5) {$\scriptstyle 1$};
\end{tikzpicture}
\caption{A morphism in $\Lad_3$.}
\label{fig.samplelad}
\end{center}
\end{figure}

Let $\partial_-Z$ (resp. $\partial_+Z$) be the sequence of labels appearing
on the bottom (resp. top) edge of the strip. Then
$\partial_-Z, \partial_+Z \in\BZ^m$ are considered as weights of $\Uqgm$.

The category $\Lad_m$ is the $\Qq$-linear category whose set of
objects is $\BZ^m$. Given two objects $a, b$, the morphisms
$\Hom_{\Lad_m}(a,b)$ is the set of all $\Qq$-linear combinations of ladders
$Z$ with $m$ sides such that $\partial_-Z=b, \partial_+Z=a$, subject to
the relations described in Equations \eqref{eq.com1}-\eqref{eq.com22} below.

Composition of morphisms is given by vertical concatenation of ladders.
Note that $\Lad_m$ does not have dual objects and hence is not pivotal.

\no{
Some authors use the notation $1_a \Lad_m 1_b$ for the set $\Hom_{\Lad_m}(a,b)$
where $1_a$ denotes the identity morphism of an object $a$, but we will not
use this notation here.
}

For an object $a=(a_1,\dots,a_m)$ of $\Lad_m$ and for $i$
such that $0\le i\le m-1$, and $r\in \BN$ let $\Eir1_a$ and $\Fir 1_a$
denote the following ladders:
\[
E_i^{(r)}1_{a } \;\; := \;\;   \xy
(0,0)*{
\begin{tikzpicture}[scale=.4]
\node at (-6,0) { $\dots$};
\node at (6,0) { $\dots$};
    \draw [very thick, directed=.55] (-4,-2) to (-4,2);
    \draw [very thick, directed=.55] (4,-2) to (4,2);
	\draw [very thick, directed=.55] (2,-.5) to (2,2);
	\draw [very thick, directed=.55] (2,-.5) to (-2,.5);
	\draw [very thick, directed=.55] (-2,-2) to (-2,.5);
	\draw [very thick, directed=.55] (-2,.5) to (-2,2);
	\draw [very thick, directed=.55] (2,-2) to (2,-.5);
\node at (-4,-2.55) {\tiny $a_{i-1}$};	
\node at (-2,-2.55) {\tiny $a_i$};	
\node at (2,-2.55) {\tiny $a_{i+1}$};
\node at (4,-2.55) {\tiny $a_{i+2}$};
	\node at (-2,2.5) {\tiny $a_i+r$};
	\node at (2,2.5) {\tiny $a_{i+1}-r$};
	\node at (0,0.75) {\tiny $r$};
\end{tikzpicture}
};
\endxy \in \Hom_{\Lad_m}(a,a+r \alpha_i)
\]

\[
F_i^{(r)}1_{a } \;\; := \;\;  \xy
(0,0)*{
\begin{tikzpicture}[scale=.4]
\node at (-6,0) { $\dots$};
\node at (6,0) { $\dots$};
    \draw [very thick, directed=.55] (-4,-2) to (-4,2);
    \draw [very thick, directed=.55] (4,-2) to (4,2);
	\draw [very thick, directed=.55] (2,.5) to (2,2);
	\draw [very thick, directed=.55] (-2,-.5) to (2,.5);
	\draw [very thick, directed=.55] (-2,-2) to (-2,-.5);
	\draw [very thick, directed=.55] (-2,-.5) to (-2,2);
	\draw [very thick, directed=.55] (2,-2) to (2,.5);
\node at (-4,-2.55) {\tiny $a_{i-1}$};	
\node at (-2,-2.55) {\tiny $a_i$};	
\node at (2,-2.55) {\tiny $a_{i+1}$};
\node at (4,-2.55) {\tiny $a_{i+2}$};
	\node at (-2,2.5) {\tiny $a_i- r$};
	\node at (2,2.5) {\tiny $a_{i+1}+r$};
	\node at (0,0.75) {\tiny $r$};
\end{tikzpicture}
};
\endxy
\in \Hom_{\Lad_m}(a,a-r \alpha_i)
\]

Here and in what follows, we draw the steps of a ladder
using slightly slanted lines instead of horizontal lines such that
the orientation of the step is upwards. With this convention we do not have
to mark the orientation in a ladder diagram, since all segments are oriented
upwards.

Comparing the sequence at the end of these ladders it is clear that
\begin{equation} \label{eq.com0}
E_i^{(r)}1_{a } = 1_{a+r\alpha_i}E_i^{(r)} = 1_{a+r\alpha_i}E_i^{(r)}1_{a}, \qquad
F_i^{(r)}1_{a } = 1_{a-r\alpha_i}F_i^{(r)} = 1_{a-r\alpha_i}F_i^{(r)}1_{a},
\end{equation}
When the specific weight is clear we will write $E_i$ instead of
$E_i 1_a$ and $F_i$ instead of $F_i 1_a$. For example,
$F^{(r)}_i E_j^{(s)} 1_a \ \text{ means }
\ F^{(r)}_i 1_{a + s \al_j} E_j^{(s)} 1_a.$

With this convention, the relations of the morphisms of $\Lad_m$ are given by

\begin{subequations}
\begin{align}
\label{eq.com1}
E_i^{(r)}F_i^{(s)}1_{a } &= \sum_{t=0}^{\min(r,s)}
{\langle a ,\alpha_i\ra +r -s \brack t} F_i^{(s-t)}E_i^{(r-t)}1_{a } &
\\ \label{eq.com2}
E_i^{(r)}F_j^{(s)}1_{a } &= F_j^{(s)}E_i^{(r)}1_{a } & \text{if $i\neq j$}
\\ \label{eq.com3}
E_i^{(r)}E_j^{(s)}1_{a } &= E_j^{(s)}E_i^{(r)}1_{a } &
\text{if $|i-j|>1$, likewise for $F$'s}
\\ \label{eq.com4}
E_i^{(s)}E_i^{(r)}1_{a } &= {r+s \brack r} E_i^{(r+s)}1_{a } &
\text{and  likewise for $F$'s}
\\ \label{eq.com22}
E_iE_jE_i1_{a } &= (E_i^{(2)}E_j + E_jE_i^{(2)})1_{a } &
\text{if $|i-j|=1$, likewise for $F$'s.}
\end{align}
\end{subequations}
for all $r, s \in \BN$, $0\le i \le m-1$ and $a \in \BZ^m$.

Recall $\langle a ,\al_i \ra= a_i - a_{i+1}$ is the standard inner product
on $\BZ^{m }$, and the quantum integers, factorial and binomial coefficients
are defined by

\begin{subequations}
\begin{align}
\label{eq.int}
[r] &= \frac{q^{r}-q^{-r}}{q-q^{-1}}, \qquad r \in \BZ \\
\label{eq.fac}
[r]! &= \prod_{k=1}^r [k], \qquad r \geq 0 \\
\label{eq.qbinom}
{r \brack s} &=
\begin{cases}
\frac{\prod_{k=r-s+1}^r[k]}{[s]!} & r,s \in \BZ, \quad s \geq 0 \\
0 & s < 0.
\end{cases}
\end{align}
\end{subequations}

\begin{remark}
\label{rem.idempotent}
If $k$ is a field and $\calC$ is a $k$-linear category, it gives
rise to an algebra $A(\calC)$ whose underlying vector space is the direct
sum of all Hom spaces $\oplus_{a, b} \Hom(a, b)$. The product of
$x \in \Hom(b,a)$ and $y \in \Hom(b',a')$ defined to be zero unless $b=a'$,
in which case the product is defined to be the composite $xy$.
$A(\calC)$ is a $k$-algebra without unit, in general. Since the
relations \eqref{eq.com1}--\eqref{eq.com22} are the defining relations
Lusztig's idempotent algebra $\dot{\mathrm{U}}_q(\mathfrak{gl}_m)$,
$A(\Lad_m)\cong\dot{\mathrm{U}}_q(\mathfrak{gl}_m)$.
\end{remark}

\subsection{The Schur quotient, the highest weight $\vartheta$,
and evaluation}

Fix positive integers $m$ and $n$. The Schur quotient $\Ladnm$ is defined to be
the $\Qqn$-linear category with set of objects all
$a=(a_1,\dots,a_m)\in \BZ^m$ such that $a \in [0,n]^m$, i.e.
$0\le a_i \le n$ for all $i$.
The algebra of morphisms of $\Ladnm$ is the quotient of the algebra
of morphisms of $\Lad_m$, with ground field extended to $\Qqn$, by the
two-sided ideal generated by all  $1_a$ with $a \not \in [0,n]^m$.

For example, $\Eir 1_a$ is always 0 in $\Ladnm$ when $r >n$.

Let
\be
\label{eq.vartheta}
\tnm:=(n^m,0^m)\in \BZ^{2m}
\ee
often abbreviated by $\vartheta$. Considered as an object of $n\Lad_{2m}$,
$\vartheta$ is a {\em highest weight element for $n\Lad_{2m}$}, in the
sense  that for every $i=1,\dots,2m-1$, we have

\begin{subequations}
\begin{align}
\label{eq.Ei}
E_i 1_{\vartheta} =0= 1_{\vartheta} F_i 1_{\vartheta +\al_i} =0.
\end{align}
\end{subequations}
This is because $\vartheta+\alpha_i$ has entries outside $[0,n]$. It follows
that the algebra of endomorphisms of $\vartheta$ is isomorphic to the
ground field $\Qqn$. In other words, we have an evaluation map
\begin{equation}
\label{eq.ev0}
\ev_n: \Hom_{\Ladnm}(\vartheta, \vartheta) \simeq \Qqn,\qquad
x=\ev(x) 1_\vartheta.
\end{equation}

\subsection{Braiding for ladders}
\label{sec.braid2ladder}

The category $\Ladnm$ does not have a tensor product and hence is not a
monoidal category. However, $\nLad:= \bigoplus_{m=1}^\infty \Ladnm$ is monoidal.  This category doesn't have duals since all webs are directed upwards.  But
it is a braided monoidal category, as follows.
The objects of $\nLad$ are sequences $a=(a_1,\dots,a_m)$ of integers
$a_i \in [0,n]$.
Given two objects $a=(a_1,\dots,a_m)$ and $b=(b_1,\dots,b_p)$,
$\Hom_{\nLad}(a,b)=\Hom_{\Ladnm}(a,b)$ if $p=m$ and $0$ otherwise.

The tensor product of objects $a \ot b$ is the horizontal concatenation
of $a$ and $b$ from left to right, and similarly for morphisms.

In \cite[Section 6]{CKM} it is shown that $\nLad$ admits a braided monoidal
category structure, i.e. it has a braiding, which is a system of natural
isomorphisms $X_{a, b}: a \ot b \to b \ot a$ satisfying the
hexagon equations \cite{RT,Tu2}.
The braiding for $\nLad$ is constructed using Lusztig's braid elements
\cite{Lu}.

We also use the diagrams with crossings in Figure \ref{fig:braiding} to
denote the braiding $X_{a,b}$ and its inverse $ X^{-1}_{a,b}$ in the category
$\Ladnm$.

When $\beta$ is a braid on $m$ strands and $a =(a_1,\dots,a_m)\in \BZ^m$,
let $\beta 1_a  \in \Hom_{\Ladnm}(\beta(a),a)$ be the morphism described in
Figure \ref{fig:braiding2}. Here $\beta(a)$ is obtained from $a$ by
applying the permutation corresponding to the braid $\beta$. For example,
$\sigma_i 1_a$ and $\sigma^{-1}_i 1_a$, where $\sigma_i, \sigma^{-1}_i $
are the $i$-th  standard braid generator and its inverse, are depicted in
Figure \ref{fig:braiding2}.


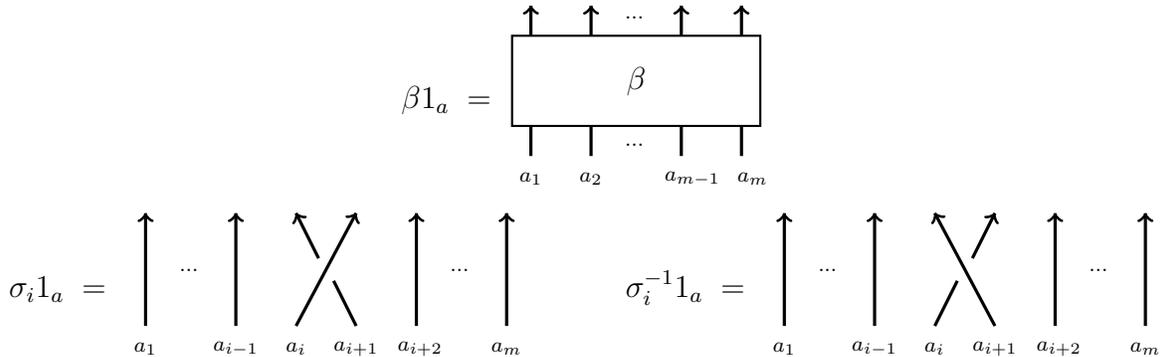
\begin{figure}[!hptb]
\begin{center}
\[
\beta1_a \; = \;
\xy
(0,0)*{
\begin{tikzpicture}
\draw[very thick, directed=1] (-1.4,0) -- (-1.4,2);
\draw[very thick, directed=1] (-.6,0) -- (-.6,2);
\draw[very thick, directed=1] (1.4,0) -- (1.4,2);
\draw[very thick, directed=1] (.6,0) -- (.6,2);
\draw[thick, fill=white, draw=black] (-1.65,.4) -- (-1.65,1.6) --
(1.65,1.6) -- (1.65,.4) -- cycle;
\node at (-1.4,-.3) {$\scs a_{1}$};
\node at (-.6,-.3) {$\scs a_{2}$};
\node at (.75,-.3) {$\scs a_{m-1}$};
\node at (1.55,-.3) {$\scs a_{m}$};
\node at (0,.15) {$\scs\dots$};
\node at ( 0,1.85) {$\scs\dots$};
\node at ( 0,1) {$\beta$};
\end{tikzpicture}};
\endxy
\]
\[
\sigma_i 1_{a} \; = \;
\xy
(0,0)*{
\begin{tikzpicture}
\draw[very thick, directed=1] (-2.4,0) -- (-2.4,1.5);
\draw[very thick, directed=1] (-1.2,0) -- (-1.2,1.5);
\draw[very thick, directed=1] (-.4,0) -- (.4,1.5);
\draw[very thick ] (.4,0) -- (.1,.6);
\draw[very thick, directed=1 ] (-.1,.9) -- (-.4,1.5);
\draw[very thick, directed=1] (1.2,0) -- (1.2,1.5);
\draw[very thick, directed=1] (2.4,0) -- (2.4,1.5);
\node at (-2.4,-.3) {$\scs a_{1}$};
\node at (-1.2,-.3) {$\scs a_{i-1}$};
\node at (-.4,-.3) {$\scs a_i$};
\node at (.4,-.3) {$\scs a_{i+1}$};
\node at (2.4,-.3) {$\scs a_{m}$};
\node at (1.25,-.3) {$\scs a_{i+2}$};
\node at (-1.8,.75) {$\scs\dots$};
\node at ( 1.8,.75) {$\scs\dots$};
\end{tikzpicture}};
\endxy
\qquad \quad
\sigma_i^{-1}1_a \; = \;
\xy
(0,0)*{
\begin{tikzpicture}
\draw[very thick, directed=1] (-2.4,0) -- (-2.4,1.5);
\draw[very thick, directed=1] (-1.2,0) -- (-1.2,1.5);
\draw[very thick, directed=1] (.4,0) -- (-.4,1.5);
\draw[very thick ] (-.4,0) -- (-.1,.6);
\draw[very thick, directed=1 ] (.1,.9) -- (.4,1.5);
\draw[very thick, directed=1] (1.2,0) -- (1.2,1.5);
\draw[very thick, directed=1] (2.4,0) -- (2.4,1.5);
\node at (-2.4,-.3) {$\scs a_{1}$};
\node at (-1.2,-.3) {$\scs a_{i-1}$};
\node at (-.4,-.3) {$\scs a_i$};
\node at (.4,-.3) {$\scs a_{i+1}$};
\node at (2.4,-.3) {$\scs a_{m}$};
\node at (1.25,-.3) {$\scs a_{i+2}$};
\node at (-1.8,.75) {$\scs\dots$};
\node at ( 1.8,.75) {$\scs\dots$};
\end{tikzpicture}};
\endxy
\]
\caption{The morphisms $\beta 1_a$,  $\sigma_i 1_a$, and
$\sigma^{-1}_i 1_a$}
\label{fig:braiding2}
\end{center}
\end{figure}

Then $\sigma_i^{\pm1} 1_a \in \Hom_{\Ladnm}(\sigma_i(a),a)$. We record here
the formula for the braidings from \cite{CKM}:
\begin{align}
\label{eq.tau1}
\sigma_i 1_a & =   \;\; (-1)^{a_i +a_ia_{i+1}}q^{a_i-\frac{a_i a_{i+1}}{n}}
  \sum_{
  \xy (0,1)*{\scriptstyle r,s \geq 0};
(0,-1.4)*{\scriptstyle s-r=a_i-a_{i+1}}; \endxy }
  (-q)^{-s} E_i^{(r)} \, F_i^{(s)} 1_{a}\\
\label{eq.tau2}
\sigma^{-1}_i 1_a
  &= \;\; (-1)^{a_i +a_ia_{i+1}}q^{-a_i+\frac{a_i a_{i+1}}{n}}
  \sum_{
  \xy (0,1)*{\scriptstyle r,s \geq 0};
(0,-1.4)*{\scriptstyle s-r=a_i-a_{i+1}}; \endxy }
  (-q)^{s} E_i^{(r)} \, F_i^{(s)} 1_{a}
\end{align}
Note that the right hand sides are finite sums, since $\Fir$ and $\Eir$
are 0 for $r > n$. Also $\sigma^{-1}_i 1_a$ is obtained from
$\sigma_i 1_a$ by the involution $q \to q^{-1}$.

\no{
In picture, this means,

\begin{equation}
\label{eq.web-braid}
  X_{a,b} \;\; := \;\; (-1)^{a+ab}q^{a-\frac{ab}{n}}
  \sum_{
  \xy (0,1)*{\scriptstyle r,s \geq 0};
(0,-1.4)*{\scriptstyle s-r=a-b}; \endxy }
  (-q)^{-s}
  \xy
(0,0)*{
\begin{tikzpicture}[scale=.35]
    \node[very thick] at (-5,-2) { $\dots$};
    \node[very thick] at (5,-2) { $\dots$};
	\draw [very thick, directed=.55] (-2,-4) to (-2,-2);
	\draw [very thick, directed=1] (-2,-2) to (-2,0.25);
	\draw [very thick, directed=.55] (2,-4) to (2,-2);
	\draw [very thick, directed=1] (2,-2) to (2,0.25);
	\draw [very thick, directed=.55] (-2,-2) to (2,-2);
	\draw [very thick] (-2,0.25) to (-2,2);
	\draw [very thick, directed=.55] (-2,2) to (-2,4);
	\draw [very thick] (2,0.25) to (2,2);
	\draw [very thick, directed=.55] (2,2) to (2,4);
	\draw [very thick, rdirected=.55] (-2,2) to (2,2);
	\node at (-2,-4.5) {\tiny $a_i$};
	\node at (2,-4.5) {\tiny $a_{i+1}$};
	\node at (-2,4.5) {\tiny $a_{i+1}$};
	\node at (2,4.5) {\tiny $a_i$};
	\node at (-3.7,0) {\tiny $a_i-s$};
	\node at (4,0) {\tiny $a_{i+1}+s$};
	\node at (0,-1.25) {\tiny $s$};
	\node at (0,2.75) {\tiny $r$};
\end{tikzpicture}
};
\endxy
\end{equation}
}

\begin{remark}
\label{rem.Ttriple} Originally Luzstig~\cite[5.2.1]{Lu} defined the
braiding and its inverses using triple product
formulas. The simplification of
Lusztig's formula's to double products in
 equations~\eqref{eq.tau1}--\eqref{eq.tau2} was first observed for
$q=1$ by Chuang and Rouquier \cite{CR}.  For general $q$, a proof of
this simplification  can be found in \cite[Lem.6.1.1]{CKM}.

\no{

of a triple exponential

Typically the action of $T_i$ is given by a triple product
formula. Indeed, Luzstig~\cite[5.2.1]{Lu} gives the following form
of a triple exponential
\begin{subequations}
\begin{align}
\label{eq:tau_prime}
\tau_{i,e}'1_{\l} &= \sum_{a,b,c; a-b+c=\lambda_i}
(-1)^b q^{e(-ac+b)}F_i^{(a)}E_i^{(b)}F_i^{(c)} 1_{\l},   \\
\label{eq:tau_dprime}
\tau_{i,e}''1_{\l} &= \sum_{a,b,c; -a+b-c=\lambda_i}
(-1)^b q^{e(-ac+b)}E_i^{(a)}F_i^{(b)}E_i^{(c)} 1_{\l}.
\end{align}
\end{subequations}
The simplification of equations~\eqref{eq:tau_prime}--\eqref{eq:tau_dprime}
to equations~\eqref{eq.tau+}--\eqref{eq.tau-} was first observed for
$q=1$ by Chuang and Rouquier \cite{CR}.  For general $q$, a proof of
this equivalence can be found in \cite[Lem.6.1.1]{CKM}.
}
\end{remark}

\subsection{From ladders to webs}
\label{sub.webs2ladders}

In \cite[Section 5]{CKM} it is proved that there is a $\Qqn$-linear functor
$$
\Psi_{n,m}: \Ladnm \to \nWeb
$$
defined as follows. For an object $a=(a_1,\dots,a_m)$ of $\Ladnm$,
$\Psi_{n,m}(a)$ is obtained from $a$ by deleting $0$s and $n$s from $a$
and converting $k$ to $k^+$. For a morphism $f$ of $\Ladnm$ which is a ladder,
$\Psi_{n,m}(f)$ is the same $f$ considered as an $n$-web, using the convention
about labelings 0 and $n$.
This means edges connected to the label $0$ should be deleted from the
diagrams and those connected to the label $n$ should be truncated to the
``tags'' depicted in the last two diagrams in equation~\eqref{webgen} as
explained in \eqref{eq_tag_cup}. The existence of $\Psi_{n,m}$ is a
consequence of the quantum skew-Howe duality.

The functors $\Psi_{n,m}: \Ladnm \to \nWeb$, with all $m$, piece together to
give a functor $\Psi_n: \nLad \to \nWeb$. By Theorem
\cite[Theorem 6.2.1]{CKM}, $\Psi_n$ is a braided monoidal functor.

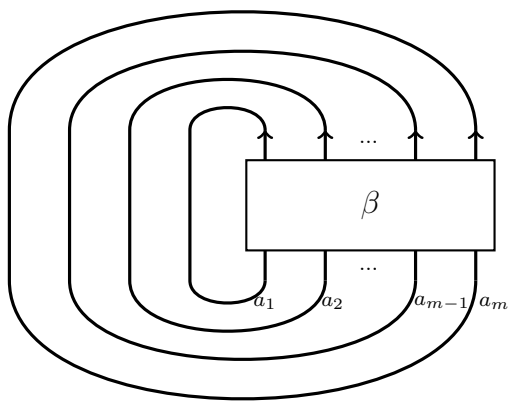
\begin{figure}[!hptb]
\begin{center}
\[
\xy
(0,0)*{
\begin{tikzpicture}
\draw[very thick, directed=1] (-1.4,0) -- (-1.4,2);
\draw[very thick, directed=1] (-.6,0) -- (-.6,2);
\draw[very thick, directed=1] (.6,0) -- (.6,2);
\draw[very thick, directed=1] (1.4,0) -- (1.4,2);
\draw[very thick ] (-1.4,2) .. controls ++(0,.4) and ++(0,.4) .. (-2.4,2)
    -- (-2.4,0) .. controls ++(0,-.4) and ++(0,-.4) .. (-1.4,0);
\draw[very thick ] (-.6,2) .. controls ++(0,.9) and ++(0,.9) .. (-3.2,2)
               -- (-3.2,0) .. controls ++(0,-.9) and ++(0,-.9) .. (-.6,0);
\draw[very thick ] (.6,2) .. controls ++(0,1.4) and ++(0,1.4) .. (-4,2)
                -- (-4,0) .. controls ++(0,-1.4) and ++(0,-1.4) .. (.6,0);
\draw[very thick ] (1.4,2) .. controls ++(0,2.1) and ++(0,2.1) .. (-4.8,2)
                -- (-4.8,0) .. controls ++(0,-2.1) and ++(0,-2.1) ..
(1.4,0);
\draw[thick, fill=white, draw=black] (-1.65,.4) -- (-1.65,1.6) --
(1.65,1.6) -- (1.65,.4) -- cycle;
\node at (-1.4,-.3) {$\scs a_{1}$};
\node at (-.5,-.3) {$\scs a_{2}$};
\node at (.95,-.3) {$\scs a_{m-1}$};
\node at (1.65,-.3) {$\scs a_{m}$};
\node at (0,.15) {$\scs\dots$};
\node at ( 0,1.85) {$\scs\dots$};
\node at ( 0,1) {$\beta$};
\end{tikzpicture}};
\endxy
\]
\caption{The standard closure of a braid $\beta$ with 4 strands.}
\label{fig:cl.braid}
\end{center}
\end{figure}


Suppose $\beta$ is a braid on $m$ strands. We view $\beta$ as a diagram
with crossings in the standard plane with strands oriented upwards. Let
$\cl(\beta)$ be the link diagram obtained by closing $\beta$ in the
standard way (see Figure \ref{fig:cl.braid}) and $L=L(\beta)$ be the
corresponding framed link. Assume $L$ has $k$ ordered components  which
are labeled by integers $a_1,\dots, a_k \in [1,n-1]$. Let
$a=(a_1,\dots,a_k)$. Let $b_1,\dots,b_m$ be the induced labeling of
strands of $\beta$ from left to right (at the bottom of $\beta$). Of course
each $b_i$ is one of the $a_j$'s.

Let $\Lcl(\beta,a)$, called the {\em ladder closure of $\beta$},
be the endomorphism of $\tnm$  in the category $n\Lad_{2m}$  given by the
ladder described  in Figure \ref{fig:Lclbraid}. Here the labels of the
strands of the braids are $b_1,\dots,b_m$ which are determined by the labels
$a_1,\dots,a_k$ of the link $L$. All the dashed vertical lines of the $m$
left sides are labeled by $n$ while all the dashed vertical lines of the
$m$ right sides are labeled by $0$. Then the remaining labels are
uniquely determined by the rule that the signed sum at every trivalent
vertex is $0$.

\begin{figure}[!hptb]
\begin{center}
\includegraphics[height=0.60\textheight]{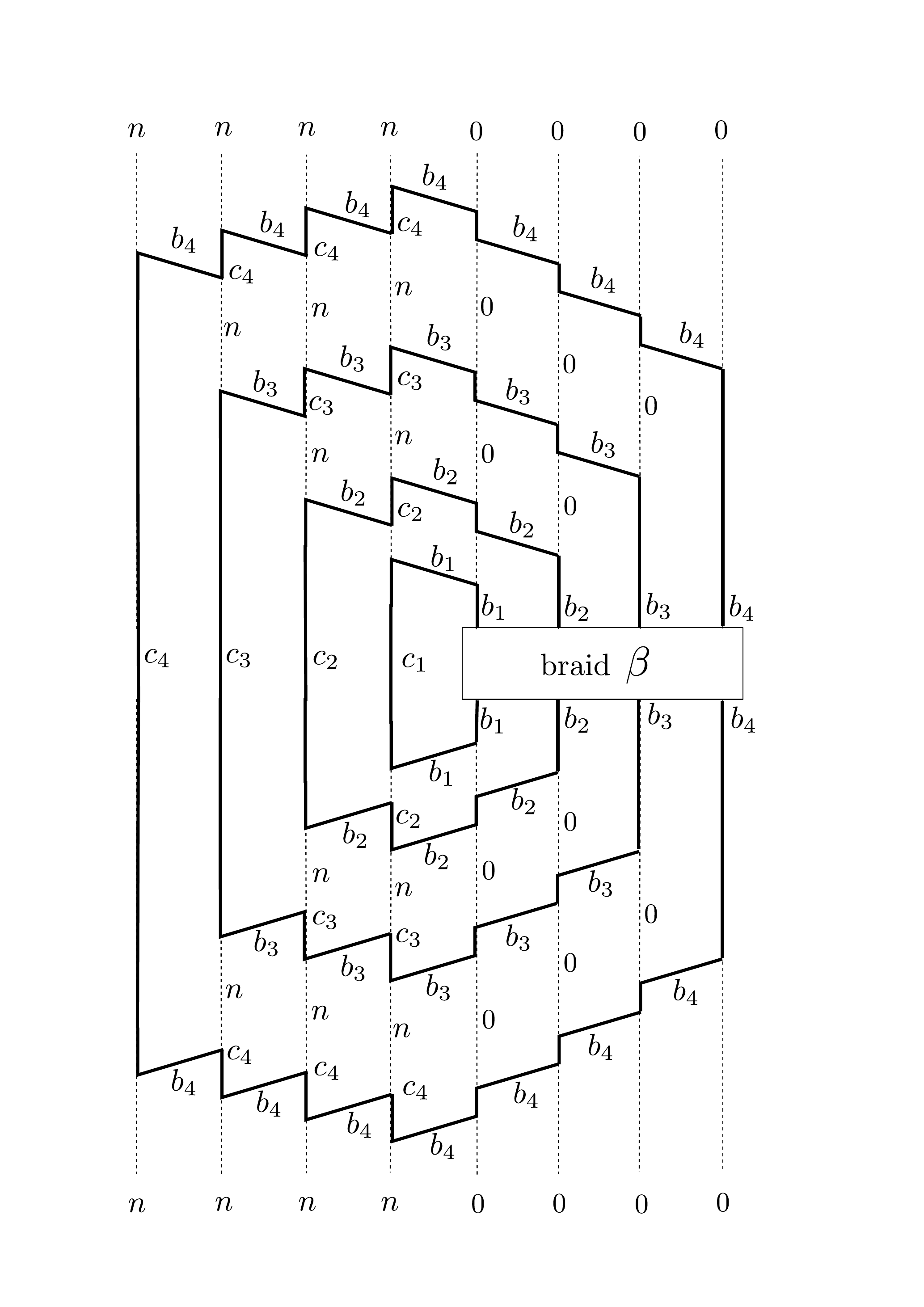}
\caption{The ladder closure of a braid $\beta$ with 4 strands, with labels.
Here $c_i= n-b_i$.}
\label{fig:Lclbraid}
\end{center}
\end{figure}

\begin{proposition}
\label{r.Ladder}
We have
$$
\ev( \Lcl(\beta,a)) = J_L(e_{a_1},\dots,e_{a_k}).
$$
\end{proposition}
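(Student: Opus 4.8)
The plan is to transport the whole computation through the functor $\Psi_n\colon \nLad\to\nWeb$ and then read off the answer from the web-theoretic formula \eqref{eq.11}. Since the highest weight $\vartheta=(n^m,0^m)$ has all entries equal to $n$ or $0$, the definition of $\Psi_n$ on objects deletes every entry, so $\Psi_n(\vartheta)=\emptyset$ and $\Psi_n(\Lcl(\beta,a))$ is an endomorphism of the empty object, i.e.\ a scalar in $\Hom_{\nWeb}(\emptyset,\emptyset)=\Qqn$. The two evaluation maps are then compatible: writing $\Lcl(\beta,a)=\ev(\Lcl(\beta,a))\,1_\vartheta$ as in \eqref{eq.ev0}, applying the $\Qqn$-linear functor $\Psi_n$, and using $\Psi_n(1_\vartheta)=\id_\emptyset$, we get $\Psi_n(\Lcl(\beta,a))=\ev(\Lcl(\beta,a))\,\id_\emptyset$; hence $\ev(\Lcl(\beta,a))=\ev(\Psi_n(\Lcl(\beta,a)))$.

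The core of the proof is to identify the web $\Psi_n(\Lcl(\beta,a))$, up to isotopy, with the blackboard link diagram $D=\cl(\beta)$ whose components carry the labels $a_1,\dots,a_k$. I would decompose the ladder of Figure \ref{fig:Lclbraid} into the central copy of $\beta$ and the surrounding closure. For the braid, the crucial input is that $\Psi_n$ is a braided monoidal functor \cite[Theorem 6.2.1]{CKM}: it sends each ladder braiding $\sigma_i^{\pm1}1_a$, given by Lusztig's operators in \eqref{eq.tau1}--\eqref{eq.tau2}, to the web braiding $X_{b_i,b_{i+1}}^{\pm1}$ of Figure \ref{fig:braiding}, so that $\Psi_n(\beta 1_a)$ is exactly $\beta$ drawn with web crossings on strands labeled $b_1,\dots,b_m$. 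For the closure I would invoke the defining label conventions of $\Psi_n$: segments labeled $0$ are deleted, and segments labeled $n$ are truncated to the tags of \eqref{eq_tag_cup}, so that the trivalent vertices where the braid meets an $n$-labeled side of the frame turn into the (tagged) cup and cap webs. Tracking these through Figure \ref{fig:Lclbraid} shows that what survives is precisely the standard closure of Figure \ref{fig:cl.braid}, an $n$-web isotopic to $D$.

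Since isotopic webs are equal, $\Psi_n(\Lcl(\beta,a))=D$ as morphisms $\emptyset\to\emptyset$, and \eqref{eq.11} gives $\ev(D)=J_L^{\fsl_n}(e_{a_1},\dots,e_{a_k})$. Combining with the compatibility of the first paragraph yields $\ev(\Lcl(\beta,a))=\ev(\Psi_n(\Lcl(\beta,a)))=\ev(D)=J_L(e_{a_1},\dots,e_{a_k})$, which is the assertion.

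I expect the main obstacle to lie in the closure half of the second paragraph: checking that, after deleting the $0$-labeled segments and tagging the $n$-labeled ones, the turn-back rungs of Figure \ref{fig:Lclbraid} reproduce the cups and caps of the standard closure with the correct orientations and, crucially, without spurious signs. The tag-switch relation \eqref{eq.switch} contributes a factor $(-1)^{a(n-a)}$, and one must apply it consistently so that the cup and cap morphisms coming from the $n$-labeled frame agree with those realizing the duality of $a^+$ and $a^-$ used in the web picture of $L$, rather than differing from them by a sign. By contrast, the braid half is essentially formal once the braided monoidality of $\Psi_n$ is in hand.
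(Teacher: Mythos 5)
Your proposal follows essentially the same route as the paper's proof: transport the ladder closure through the braided monoidal functor $\Psi_n$, identify its image with the labeled blackboard diagram of $\cl(\beta)$, and conclude via \eqref{eq.11}. The sign/tag issue you flag at the end is precisely what the paper disposes of by invoking \eqref{eq.switch} together with the tag-cancellation identity \eqref{eq:cancel-tags}, so your argument is complete once that second identity is brought in.
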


\begin{proof}
Let $L$ denote the closure of $\beta$. $L$ is a link colored by $a$.
Identities \eqref{eq.switch} and  \eqref{eq:cancel-tags} show that
$$
\Psi_n(L,a)=\cl(\beta,a).
$$
Since $\Psi_n$ is a $\Qqn$-linear braided functor, we have
\begin{align*}
\ev(\cl(\beta,a)) & = \ev(\Psi_n(L,a))\\
& = J_L(e_{a_1},\dots,e_{a_k}).
\end{align*}
where the second identity follows from \eqref{eq.11}.
\end{proof}


\section{Introducing the variable $x=q^n$}
\label{sec.x}

Proposition \ref{r.Ladder} allows one to calculate the quantum
$\fsl_n$-invariant of a link $L$ for each fixed $n \geq 2$. In this section,
we introduce an algebra that allows us to unify the quantum $\fsl_n$
invariants of links into Laurent polynomials of a variable $x=q^n$.

\subsection{Free associative algebra on $E_i,F_j$}
\label{sub.free}
Let
\be
\label{eq.calX}
\calX_{m}=\{ E_1,\dots,E_{m-1}, F_1,\dots,F_{m-1}\}
\ee
and $\Ass_{m}$ be the free
associative $\BQ(q)$-algebra generated by $\calX_m$. For $i=1,\dots, m-1$,
define the divided powers by
$$
E_i^{(r)}:= E_i^r/[r]! \in \Ass_m , \quad F_i^{(r)}:= F_i^r/[r]! \in \Ass_m ,
$$
where $[r]!$ is given by equation \eqref{eq.int}.
A $\Qq$-basis of $\Ass_m$ can be described as follows. For
$Y=(Y_1,\dots,Y_r)\in (\calX_m)^r$ and
$k=(k_1,\dots,k_r)\in \BN^r$ define
$$
Y^{(k)} := Y_1^{(k_1)} Y_2^{(k_2)} \dots Y_r^{(k_r)}.
$$
Then the set of all $Y^{(k)}$, where $Y_{i}\neq Y_{i+1}$ and $k_i \ge 1$,
along with $k=\emptyset$, is a $\Qq$-basis of $\Ass_m$.

Note that for each $a \in \BZ^m$, $Y^{(k)} 1_a$ is a morphism in the
category $\Ladnm$. For $a,b\in \BZ^{m }$ and $n>1$, define the
$\Qq$-linear map
$$
\pabn: \Ass_m \to \Hom_{\Ladnm}(a,b), \quad
\pabn( Y^{(k)} ) =  1_{a} Y^{(k)} 1_{b}.
$$
The algebra $\Ass_{m}$ admits a natural $\BZ^m$-grading, called {\em weight},
defined by
$$
\w (F_i)=-\alpha_i, \qquad \w (E_i) = \alpha_i.
$$
Observe that $\pabn (Y^{(k)})=0$ unless $a = b + \w(Y^{(k)})$.

Let $I_k$ be the two-sided ideal of $\Ass$ generated by
$E_i^{(r)}, F_i^{(r)}$, with $i=1,\dots, m -1$ and $ r \ge k$. It is clear
that $I_{k+1} \subset I_k$. Let $\hAss_m$ be the completion of
$\Ass_m$ with respect to the nested sequence of ideals $I_k$.
\no{Every element
$x \in\hAss_m$ has a presentation of the form
\be
x = \sum_{k=0}^\infty x_k,
\notag
\ee
where $x_k \in I_k$.}
Since $p^n_{a,b}(I_k)=0$ if $k > n$,
we can extend  $p^n_{a,b}$  to a map, also denoted by $p^n_{a,b}$,
\be
\nonumber
p^n_{a,b}:\hAss_m\to \Hom_{n\Lad_m}(a,b).
\ee

\subsection{Convention on negative powers}

The divided powers $E_i^{(r)}$ and $F_i^{(r)}$ are defined for non-negative
integers $r$. It is convenient to extend them to negative powers by the
following convention.
For $r<0$, $a \in \BZ^{m}$, and $s \in \BZ$ we use the following
convention
\begin{align*}
E_i^{(r)} & = F_i^{(r)}=0 \quad \text{ in }  \hAss_m\\
E_i^{(r)} 1_a  & = F_i^{(r)}1_a=0\quad \text{ in } \ \Lad_m
\end{align*}

With the above convention, Equations ~\eqref{eq.com1}, \eqref{eq.com2}, and
\eqref{eq.com4} can be rewritten in the following form: For all $r,s \in \BZ$
and $i\neq j$, we have the following identities in $\Lad_m$.

\begin{align}
\label{eq.com1a}
 E_i^{(r)}F_i^{(s)}1_{a } &=
\sum_{t\in \BZ}
{\langle a ,\alpha_i\ra +r -s \brack t} F_i^{(s-t)}E_i^{(r-t)}1_{a }\\
\label{eq.com2a}
E_i^{(r)}F_j^{(s)}1_{a } &= F_j^{(s)}E_i^{(r)}1_{a }
\\ \label{eq.com4a}
E_i^{(s)}E_i^{(r)}1_{a } &= {r+s \brack r} E_i^{(r+s)}1_{a },
\quad F_i^{(s)}F_i^{(r)}1_{a } =  {r+s \brack r} F_i^{(r+s)}1_{a }.
\end{align}

\subsection{Evaluation}
\label{sub.evalution}

Fix positive integers $n,m$. Recall that $\vartheta$ given by
\eqref{eq.vartheta} is an object of $n\Lad_{2m}$, and recall the evaluation
map \eqref{eq.ev0}. This gives rise to an evaluation map

\begin{equation}
\label{eq.ev}
\ev_n:  \hAss_{2m}  \to \Qqn,\quad
\ev_n(x) := \ev_{n,m} \left ( p^n_{\vartheta,\vartheta}(x)  \right).
\end{equation}

Given a monomial $z$ in $E_i, F_j$ the element $\ev_n(z)$ can be
calculated by a simple algorithm moving each divided power in $E_i$ appearing
in $z$ to the right using equations \eqref{eq.com1a} and \eqref{eq.com2a}.
Note that we are not moving divided powers of  $E_i$ past divided powers of
$E_j$. Since the $E_i$'s annihilate the weight $1_\vartheta$,
all that remains after sliding all $E_i$'s to the right is a sum of
products of the quantum binomials produced from the application of
\eqref{eq.com1a}. For details see the example in Section \ref{sub.trefoil}
and Proposition \ref{prop.tech1}.

Suppose $Y=(Y_1,\dots,Y_k)\in (\calX_{2m})^k$ and
$b=(b_1,\dots,b_k)\in \BZ^k$. There is an easy case when
$\ev_n(Y^{(b)})=0$, namely when $1_\vartheta Y^{(b)} 1_\vartheta$
factors through a weight with a negative component. The weight of $Y^{(b)}$
is denoted by
$$
\w(Y^{(b)})=(\w_1(Y^{(b)}), \dots, \w_{2m}(Y^{(b)})) \in \BZ^{2m}.
$$
We say $Y^{(b)}$ has {\em negative} weight if $\w_j(Y^{(b)})<0$  for some $j$
with $m<j \le 2m$.
For an index $i, 1\le i \le k$ define the $i$-th tail $\Tail_i(Y,b)$ by
$$
\Tail_i(Y,b) = Y_i^{(b_i)}Y_{i+1}^{(b_{i+1})}  \dots Y_k^{(b_k)}.
$$
We say $(Y,b)$ is {\em tail-negative} if there is an index
$i, 1\le i \le k$, such that $\Tail_i(Y,b)$ has negative weight.

\begin{lemma}
\label{r.00}
Suppose $(Y,b)$ is tail-negative. Then $\ev_n(Y^{(b)})=0$ for all $n$.
\end{lemma}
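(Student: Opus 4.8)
The plan is to show directly that the underlying morphism $p^n_{\vartheta,\vartheta}(Y^{(b)}) = 1_\vartheta Y^{(b)} 1_\vartheta$ already vanishes in the category $n\Lad_{2m}$, from which $\ev_n(Y^{(b)}) = 0$ follows immediately from the definition \eqref{eq.ev}. The engine of the argument is the fact that, in the Schur quotient $n\Lad_{2m}$, every idempotent $1_w$ with $w \notin [0,n]^{2m}$ is zero, so it suffices to exhibit such an out-of-range weight among the intermediate weights of the product.

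First I would record the weight-homogeneity of the factors. Since each $Y_\ell^{(b_\ell)}$ has a well-defined weight $\w(Y_\ell^{(b_\ell)}) \in \BZ^{2m}$, the analogue of \eqref{eq.com0} for a product gives, for each index $i$ with $1 \le i \le k$,
$$
Y_i^{(b_i)} \cdots Y_k^{(b_k)}\, 1_\vartheta = 1_{\vartheta + \w(\Tail_i(Y,b))}\, Y_i^{(b_i)} \cdots Y_k^{(b_k)}\, 1_\vartheta,
$$
because $\w$ is additive and $\w(\Tail_i(Y,b)) = \sum_{\ell=i}^{k} \w(Y_\ell^{(b_\ell)})$. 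Inserting this and using orthogonality of the idempotents yields
$$
1_\vartheta Y^{(b)} 1_\vartheta = 1_\vartheta\, Y_1^{(b_1)} \cdots Y_{i-1}^{(b_{i-1})}\, 1_{\vartheta + \w(\Tail_i(Y,b))}\, Y_i^{(b_i)} \cdots Y_k^{(b_k)}\, 1_\vartheta,
$$
so the morphism factors through the idempotent $1_{\vartheta + \w(\Tail_i(Y,b))}$.

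The second step exploits the specific shape $\vartheta = (n^m, 0^m)$. By hypothesis $(Y,b)$ is tail-negative, so there is an index $i$ and an index $j$ with $m < j \le 2m$ such that $\w_j(\Tail_i(Y,b)) < 0$. Since the $j$-th entry of $\vartheta$ vanishes for $j > m$, the $j$-th entry of $\vartheta + \w(\Tail_i(Y,b))$ equals $0 + \w_j(\Tail_i(Y,b)) < 0$, hence $\vartheta + \w(\Tail_i(Y,b)) \notin [0,n]^{2m}$. Therefore $1_{\vartheta + \w(\Tail_i(Y,b))} = 0$ in $n\Lad_{2m}$, the displayed factorization gives $1_\vartheta Y^{(b)} 1_\vartheta = 0$, and so $\ev_n(Y^{(b)}) = 0$. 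As this holds for every $n$, the lemma follows.

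The argument is short, and the only place requiring genuine care is the bookkeeping in the first step: one must be sure that reading the product $Y_1^{(b_1)} \cdots Y_k^{(b_k)}$ from the bottom of the ladder (the side carrying the rightmost $1_\vartheta$) upward makes the intermediate weight immediately above the tail $\Tail_i(Y,b)$ exactly $\vartheta + \w(\Tail_i(Y,b))$, rather than $\vartheta$ minus that weight. This is pinned down by the sign conventions $\w(E_i) = \alpha_i$, $\w(F_i) = -\alpha_i$ together with \eqref{eq.com0}. Once the direction is fixed, the rest is immediate, and it is transparent why the definition of negative weight only inspects the last $m$ coordinates: these are precisely the coordinates in which $\vartheta$ vanishes, and hence the only ones where a negative tail-weight is guaranteed to leave the admissible range $[0,n]$.
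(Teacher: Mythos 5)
Your proof is correct and is essentially identical to the paper's argument: both factor $Y^{(b)}1_\vartheta$ through the intermediate weight $\mu = \vartheta + \w(\Tail_i(Y,b))$, note that tail-negativity together with $\vartheta_j = 0$ for $j > m$ forces $\mu_j < 0$, and conclude that the corresponding idempotent (hence the whole morphism) vanishes in the Schur quotient $n\Lad_{2m}$. Your added remark about the direction of the weight bookkeeping via \eqref{eq.com0} is a fine clarification of a point the paper leaves implicit, but it does not change the substance of the argument.
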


\begin{proof}
Note that  $Y^{(b)} 1 _\vartheta$ factors through
$\Tail_i(Y,b) 1_\vartheta \in \Hom_{\Lad_{2m}^n}(\mu,\vartheta)$, where
$$
\mu = \w(\Tail_i(Y,b)) + \vartheta.
$$
Suppose $\w_j(\Tail_i(Y,b)) < 0$ for some $j >m$ and $1 \le i \le k$.
We have $\mu_j= \w(\Tail_i(Y,b)) + \vartheta_j = \w_j(\Tail_i(Y,b))<0$.
By definition, $\Tail_i(Y,b) 1_\vartheta =0$ in $\Lad_{2m}^n$. Hence
$Y^{(b)} 1 _\vartheta=0$ in $\Lad_{2m}^n$.
\end{proof}

The tail-negative condition can be characterized by the following function
\be
\label{eq.HH}
\sH(Y,b):= \prod_{j=m+1}^{2m} \prod_{i=1}^k \He(\w_j(\Tail_i(Y,b))).
\ee
where
\be
\label{eq.he}
\He(x)=\begin{cases}
1 & \text{if}\,\, x \geq 0 \\
0 & \text{if}\,\, x < 0 .
\end{cases}
\ee
denotes the Heaviside function. Note that
\be
\label{eq.tail}
\sH(Y,b)=
\begin{cases} 0 \quad & \text{if $(Y,b)$ is tail-negative}\\
1 & \text{otherwise.}
\end{cases}
\ee

\subsection{Braiding in $\hAss$}
\label{sub.braiding}

Suppose $a=(a_1,\dots,a_m)\in \BZ^m$ and $0\le i \le m-1$. Let

\begin{subequations}
\begin{align}
\label{eq.tau+}
 T_i (a) &= (-1)^{a_i+a_ia_{i+1}} q^{a_i}
\sum_{s\in \BZ} (-q)^{-s}  E_i^{(s+a_{i+1}-a_i)}F_i^{(s)} \in \hAss \\
 \label{eq.tau-}
 T_i^{-1}(a) &= (-1)^{a_i+a_ia_{i+1}} q^{-a_i}
\sum_{s\in \BZ} (-q)^{s}  E_i^{(s+a_{i+1}-a_i)}F_i^{(s)}  \in \hAss
\end{align}
\end{subequations}
Recall that we use the convention $\Eir=\Fir=0$ if $r<0$. Note that
$T_i^{-1}(a)$ is obtained from $ T_i(a)$ by the involution $q \to q^{-1}$.
From equation \eqref{eq.tau1} and \eqref{eq.tau2} it follows that
for $\ve=\pm 1$,

\be
\label{eq.match}
\sigma_i^\ve 1_a = q^{\ve \frac{ a_i a_{i+1}} {n}}T_i^{\ve}(a) 1_a
\quad \text{ in $\Ladnm$}.
\ee

\subsection{Special functions}
\label{sec.Homf}

Let $Y=(Y_1,\dots,Y_k)\in (\calX_m)^k$. A function $H: \BZ^r \to \hAss_{m}$
is called a $Y$-{\em special} if
\be
\label{eq.28}
H(a) =  \sum_{s \in \BZ^t} (-1)^{g_1(a,s)} q^{g_2(a,s)}
Y^{(f(a,s))},
\ee
where $g_1,g_2:\BZ^{r+t} \to \BZ$  are linear and  $ f: \BZ^{r+t} \to\BZ^k$
is affine such that $f(a,\cdot): \BZ^{t} \to\BZ^k$ is injective for
every $a \in \BZ^k$. The injectivity property ensures that the right
hand side of \eqref{eq.28} defines an element in $\hAss_m$.
The next lemma is easy to verify.

\begin{lemma}
\label{r.2}
\rm{(a)}
The function $T_i, T^{-1}_i:\BZ^m \to \hAss_m$ given by Equations
\eqref{eq.tau+} and \eqref{eq.tau-} are $(E_i, F_i)$-special.
\newline
\rm{(b)}
Suppose $f: \BZ^k \to \BZ^r$ is a linear function. Then the function
$H: \BZ^k \to \hAss_m$ given by
$$
H(a)= Y^{(f(a))}
$$
is $Y$-special.
\newline
\rm{(c)}
If $H'$ is $Y'$-special and $H''$ is $Y''$-special, then $H' H''$ is
$Y' \cup Y''$-special.
\end{lemma}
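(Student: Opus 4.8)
The plan is to verify all three parts directly against the defining form~\eqref{eq.28}, the only part carrying genuine (if minor) content being the closure statement (c); parts (a) and (b) are a matter of reading off the data $(g_1,g_2,f)$ and checking the injectivity clause.

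For part (a), I would first separate the sign from the $q$-power in~\eqref{eq.tau+}--\eqref{eq.tau-} via $(-q)^{\mp s}=(-1)^{s}q^{\mp s}$. Written this way the $(E_i,F_i)$-special structure is transparent: the number of letters is $k=2$ with $Y=(E_i,F_i)$, the domain is $\BZ^m$ (so $r=m$), and there is a single summation variable, $t=1$, namely $s$. I would then read off the affine map $f(a,s)=(s+a_{i+1}-a_i,\,s)\in\BZ^2$, whose second coordinate is $s$; hence $f(a,\cdot)\colon\BZ\to\BZ^2$ is injective for every $a$, which is exactly the condition ensuring the infinite sum converges in $\hAss_m$ (the terms land in ever deeper ideals $I_k$ as $|s|\to\infty$). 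The $q$-exponent is $g_2(a,s)=a_i-s$, which is linear, while the sign exponent, collecting the prefactor $(-1)^{a_i+a_ia_{i+1}}$ with the factor $(-1)^{s}$, is linear in the summation variable $s$ as required (cf. step (g) of the plan). The case of $T_i^{-1}$ is identical, obtained by the substitution $q\to q^{-1}$, which sends $g_2$ to $-a_i+s$ and flips the sign contribution of $s$; both are of the required form.

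Part (b) is the degenerate case with no summation variable, $t=0$: taking $g_1=g_2=0$ and $f$ equal to the given linear (in particular affine) map, one has $H(a)=Y^{(f(a))}$ exactly as in~\eqref{eq.28}, and the injectivity requirement on $f(a,\cdot)$ is vacuous since its domain $\BZ^{0}$ is a point. So $H$ is $Y$-special with no further work.

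For part (c) I would expand the product. Writing $H'(a)=\sum_{s'\in\BZ^{t'}}(-1)^{g_1'(a,s')}q^{g_2'(a,s')}(Y')^{(f'(a,s'))}$ and similarly for $H''$, multiplying and distributing gives a sum over $(s',s'')\in\BZ^{t'+t''}$ with summand $(-1)^{g_1'(a,s')+g_1''(a,s'')}q^{g_2'(a,s')+g_2''(a,s'')}(Y')^{(f'(a,s'))}(Y'')^{(f''(a,s''))}$. Since $Y^{(c)}$ is by definition the ordered product of divided powers, the concatenation identity $(Y')^{(c')}(Y'')^{(c'')}=(Y'\cup Y'')^{(c',c'')}$ holds, so the summand is $(Y'\cup Y'')$ raised to the affine exponent $(f'(a,s'),f''(a,s''))$, while the sign and $q$-exponents are $g_1'+g_1''$ and $g_2'+g_2''$, sums of linear functions and hence linear. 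This is precisely the form~\eqref{eq.28} for $Y=Y'\cup Y''$ and $t=t'+t''$. The one point I expect to require an actual argument is that $f(a,\cdot)=(f'(a,\cdot),f''(a,\cdot))$ remains injective for fixed $a$: this follows from the block structure, since the first block of coordinates depends only on $s'$ and the second only on $s''$, so $f(a,(s_1',s_1''))=f(a,(s_2',s_2''))$ forces $f'(a,s_1')=f'(a,s_2')$ and $f''(a,s_1'')=f''(a,s_2'')$, whence $s_1'=s_2'$ and $s_1''=s_2''$ by injectivity of the two factors. This guarantees the product again defines a bona fide element of $\hAss_m$, completing the verification.
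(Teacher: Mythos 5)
Your parts (b) and (c) are correct and complete, and they are exactly the definition-chase the paper leaves to the reader (its entire ``proof'' is the sentence ``The next lemma is easy to verify''): the degenerate case $t=0$ for (b), and for (c) the concatenation identity $(Y')^{(c')}(Y'')^{(c'')}=(Y'\cup Y'')^{(c',c'')}$ together with the block-diagonal injectivity argument. Your remark in (a) that injectivity of $f(a,\cdot)$ is what makes the infinite sum converge in $\hAss_m$ also matches the sentence following \eqref{eq.28}.

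The gap is in your handling of the sign in part (a). You correctly identify the sign exponent as $g_1(a,s)=a_i+a_ia_{i+1}+s$ and then declare it acceptable because it is ``linear in the summation variable $s$ as required,'' citing step (g) of the introduction. But step (g) is an informal summary; the governing Definition \eqref{eq.28} requires $g_1\colon\BZ^{r+t}\to\BZ$ to be linear in \emph{all} variables $(a,s)$ jointly, and $a_ia_{i+1}$ is quadratic in $a$. So as written your argument does not establish that $T_i$ has the form \eqref{eq.28}. The paper's intended repair is the Remark immediately following Lemma~\ref{r.2}, which asserts that a quadratic exponent can be replaced modulo $2$ by a linear one; a complete proof of (a) must invoke it (or something like it), and you never do. Moreover this point is genuinely delicate: the Remark's recipe (keep the diagonal of the matrix) linearizes square terms, $a_i^2\equiv a_i \pmod 2$, but it cannot handle the cross term $a_ia_{i+1}$, since $(-1)^{a_ia_{i+1}}=-1$ exactly when both $a_i$ and $a_{i+1}$ are odd, and no linear exponent reproduces this (a character $(-1)^{\alpha a_i+\beta a_{i+1}}$ is multiplicative, so its value at $(1,1)$ is the product of its values at $(1,0)$ and $(0,1)$). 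The sign does linearize when the two strands carry the same color (the knot case, $a_i=a_{i+1}$, where $a_ia_{i+1}\equiv a_i$), but in general the factor $(-1)^{a_ia_{i+1}}$ has to be carried along as a separate prefactor depending on $a$ alone --- it is still $q$-holonomic, so nothing downstream breaks, but the literal claim that $T_i$ is $(E_i,F_i)$-special needs this extra discussion, which is missing from your proposal.
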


\begin{remark}
A function $g: \BZ^s \to \BZ$ is {\em quadratic} if there is a $s\times s$
matrix $A$ with integer entries such that $g(a) = a^T A a$.
If $g:\BZ^s \to \BZ$ is quadratic, then there is a linear $f:\BZ^s \to \BZ$
such that $(-1)^{g(a)}= (-1)^{f(a)}$ for all $a \in \BZ^s$.
Indeed,
suppose $g(a) = a^T A a$, where $A$ is a $s\times s$ matrix $A$ with
integer entries. Let $B$ be the $s\times s$ diagonal matrix defined by
$B_{ii}= A_{ii}$ and $f(a):= B a$. Then we have $(-1)^{g}= (-1)^f$.
\end{remark}

\subsection{Unifying the $\fsl_n$-link invariant}

For $a =(a_1,\dots,a_r)\in \BZ^r$ let $\|a \|_\infty$ be the usual norm
defined by $\|a \|_\infty= \max_i |a_i|$.

\begin{proposition}
\label{r.main0}
Suppose $L$ is framed oriented link in the 3-space with $r$ ordered
components which is the closure of a braid with $m$ strands.
Then there is $Y$-special function $H: \BZ^r \to \hAss_{2m}$
such that for all integers  $a_1, \dots, a_r\in [0,n-1]$, we have
\be
\label{eq.m0}
\tJ_L^{\fsl_n}(e_{a_1}, \dots, e_{a_r})= \ev_n (H(a_1,\dots, a_r)).
\ee
Moreover,  $(Y, f(a,s))$ is tail-negative whenever
$\|s \|_\infty > \|a \|_\infty$. Here $f(a,s)$ is the function
appearing in the presentation \eqref{eq.28} of $H$.
\end{proposition}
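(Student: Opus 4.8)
The plan is to read the ladder closure $\Lcl(\beta,a)$ of Figure~\ref{fig:Lclbraid} as a composition of three elementary pieces in $n\Lad_{2m}$ and then transport that composition, factor by factor, into the completed free algebra $\hAss_{2m}$ via the braiding formula \eqref{eq.match}. First I would fix a word $\beta=\sigma_{i_1}^{\ve_1}\cdots\sigma_{i_N}^{\ve_N}$ presenting the braid and write $\Lcl(\beta,a)$, read from bottom to top, as
\[
\Lcl(\beta,a)=C_{\mathrm{top}}\,\Bigl(\textstyle\prod_k \sigma_{i_k}^{\ve_k}1_{(\cdot)}\Bigr)\,C_{\mathrm{bot}},
\]
where $C_{\mathrm{bot}}$ is the cup half of the closure, $C_{\mathrm{top}}$ the cap half, and the middle factor is the image of the braid in $n\Lad_{2m}$. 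Reading off Figure~\ref{fig:Lclbraid} with the conventions of Section~\ref{sub.webs2ladders}, $C_{\mathrm{bot}}$ is a monomial in divided powers $E_j^{(\cdot)}$ and $C_{\mathrm{top}}$ a monomial in divided powers $F_j^{(\cdot)}$, whose exponents are affine-linear functions of $a$ determined by the strand labels $b_1,\dots,b_m$ (each $b_i$ being one of the $a_j$). Since every strand label is a coordinate projection of $a$, every exponent and every argument appearing below is a linear function of $a$.

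Next I would define $H$. Replacing each crossing by its braiding expression \eqref{eq.match}, $\sigma_{i}^{\ve}1_b=q^{\ve b_i b_{i+1}/n}\,T_i^{\ve}(b)\,1_b$, produces a scalar prefactor $q^{\frac1n E(a)}$, where $E(a)=\sum_k \ve_k\, c_k c'_k$ is a quadratic form in $a$ and $c_k,c'_k$ are the two strand colours meeting at the $k$-th crossing, times the $\Qq$-coefficient product
\[
H(a):=C_{\mathrm{top}}\,\Bigl(\textstyle\prod_k T_{i_k}^{\ve_k}(b)\Bigr)\,C_{\mathrm{bot}}\ \in\ \hAss_{2m}.
\]
By Proposition~\ref{r.Ladder} and the definition \eqref{eq.tJ} of $\tJ$ this gives
\[
\tJ_L^{\fsl_n}(e_{a_1},\dots,e_{a_r})=q^{\frac1n\left(\sum_{i,j}\ell_{ij}a_ia_j+E(a)\right)}\,\ev_n\bigl(H(a)\bigr).
\]
The crucial point is that the two quadratic fractional powers cancel as forms: $E(a)=-\sum_{i,j}\ell_{ij}a_ia_j$ is exactly the framing anomaly of the braid closure, and the normalization exponent in \eqref{eq.tJ} was designed to absorb it (the integrality \eqref{eq.integral}, valid for every admissible $n$ while $\ev_n(H(a))\in\Zq$, confirms that no fractional power can survive). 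This yields \eqref{eq.m0}, and it is precisely this cancellation that keeps the function $g_2$ in \eqref{eq.28} linear rather than quadratic.

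That $H$ is $Y$-special now follows from Lemma~\ref{r.2}: each $T_i^{\ve}$ is $(E_i,F_i)$-special by part~(a), the monomials $C_{\mathrm{bot}},C_{\mathrm{top}}$ are special by part~(b), and their product is special by part~(c); precomposition with the linear substitution $a\mapsto b$ preserves specialness directly from the definition, and the residual quadratic signs $(-1)^{b_ib_{i+1}}$ are harmless by the Remark following Lemma~\ref{r.2}. Finally, for the tail-negativity claim I would track weights. Writing $H(a)=\sum_{s\in\BZ^t}(-1)^{g_1}q^{g_2}Y^{(f(a,s))}$ with one summation coordinate $s_k$ for each crossing (the index of the $F_{i_k}^{(s_k)}$ it contributes), each $\w(\Tail_i(Y,f(a,s)))$ is affine in $(a,s)$, and a large $s_k$ decreases one of the last $m$ coordinates linearly. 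The key estimate is that on the non-vanishing locus all weights lie in $[0,n]^{2m}$ and differ from $\vartheta$ by at most $\|a\|_\infty$ in each coordinate, since they start at $\vartheta$ and are moved only by colours $\le\|a\|_\infty$; hence once some $|s_k|>\|a\|_\infty$ the factor $F^{(s_k)}$ overshoots and forces a coordinate $j>m$ of some tail below $0$, i.e.\ negative weight, so by Lemma~\ref{r.00} that term evaluates to zero. I expect this weight bookkeeping — producing a uniform bound and pinning down which tail goes negative for each oversized summation variable — to be the main obstacle, and the natural way to carry it out is an induction on the factors of $H$ read from the bottom, maintaining the invariant that every intermediate weight lies in $[0,n]^{2m}$ exactly when no summation variable exceeds $\|a\|_\infty$.
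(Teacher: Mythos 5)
Your proposal follows essentially the same route as the paper's own proof: the same cap/braid/cup decomposition of the ladder closure, replacement of each crossing by $T_i^{\pm 1}$ via \eqref{eq.match} with the fractional $q$-powers assembling into the linking-number factor that the normalization \eqref{eq.tJ} absorbs, specialness via Lemma \ref{r.2}, and tail-negativity by noting that everything below an oversized $F_{i_l}^{(s_l)}$ has weight whose last $m$ coordinates are a permutation of the colors, hence bounded by $\|a\|_\infty$, so $s_l>\|a\|_\infty$ forces a negative coordinate in a position $>m$. The remaining differences are cosmetic (which end of the closure carries the $E$'s versus the $F$'s, and writing $T_{i_k}^{\ve_k}(b)$ where the argument should be the intermediate permuted weight $\sigma_{i_{k+1}}\cdots\sigma_{i_t}(c,b)$), so the argument is correct and matches the paper's.
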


\begin{proof}
Let  $L$ be  the closure of a braid $\beta\in B_{2m}$ as in Figure
\ref{fig:cl.braid}, and $a=(a_1,\dots,a_r)\in \BN^r$. Here $\beta$ is a
braid which is non-trivial only  on the right $m$ strands, and has a
presentation
\be
\label{eq.1d}
\beta= \sigma_{i_1}^{\ve_1} \dots \sigma_{i_t} ^{\ve_t}, \quad
\text{$i_j \in \{m+1,\dots,2m-1\}$ and $\ve_j \in
\{\pm 1\}$ for $j=1,\dots,t,$}
\ee
where $\sigma_i$ is the $i$-th standard generator of the braid group (see
Figure \ref{fig:braiding}).

Let $\Lcl(\beta,a)$, with labels coming from the labels $a_i$ of $L$, be
the ladder closure of $\beta$ as in Figure \ref{fig:Lclbraid}. Recall that
the labelings $b_i$ of the strands of $\beta$ come from the labelings of $L$,
and $c_i=n-b_i$. Let $b=(b_1,\dots,b_m)$ and
$c =(c_m,c_{m-1}, \dots, c_1)$. Then each $b_i$ is one of  $(a_1,\dots, a_r)$.

The horizontal lines at the bottom and the top of the braid $\beta$
decomposes $\Lcl(\beta)$ into 3 morphisms in $\Lad^n_{2m}$:
$$
\Lcl(\beta,a)= \Capp_m(a)\,  (\beta 1_{c \ot b} ) \, \Cupp_m (a).
$$
Each part can be written in a form that does not depend on $n$.

Indeed, the lower morphism $\Cupp_m (a)$ is a composition of $E_i^{(b_j)}$ for
various $i,j$. Hence
\be
\nonumber
\Cupp_m (a) = \Cupam_m(a) 1_{\vartheta(n,m)},
\ee
where $\Cupam_m(a) \in \Ass_{2m}$ is the product of several $F_i^{(b_j)}$.
Explicitly,
\be
\nonumber
\Cupam_m(a) =
\rprod_{k\in [1,m]} \left[\left(\lprod_{i\in [1,k-1]}
 \left( F_{m+k}^{(b_k)} F_{m-k}^{(b_k)}\right)\right) F_m^{(b_k)}\right],
\ee
where
$$
\rprod_{i\in [1,k]} x_i := x_1 x_2 \dots x_k, \qquad
\lprod_{i\in [1,k]} x_i := x_k x_{k-1} \dots x_1.
$$
Then $\Cupam_m(a)$, as a function of $a$ is a special function,
see Lemma \ref{r.2}.

Similarly, the top morphism
\be
\nonumber
\Capp_m (a) =   \Capam_m(a)  1_{c \ot b}
= 1_{\vartheta(n,m)} \Capam_m(a) 1_{c \ot b}
\ee
is a special function. Explicitly,
\be
\nonumber
\Capam_m(a) =\lprod_{k\in [1,m]} \left[E_m^{(b_k)}  \rprod_{i\in [1,k-1]}
\left( E_{m+k}^{(b_k)} E_{m-k}^{(b_k)}\right) \right].
\ee

Now consider the middle morphism $\beta 1_{c \ot b}$. Using \eqref{eq.1d}
and \eqref{eq.match}, we have
\be
\nonumber
\beta 1_{c \ot b} = z_1(a) z_2(a) \dots z_t(a) 1_{c \ot b},
\ee
where
$$
z_j(a) =  T_{i_j}^{\ve_j}\left(\sigma_{i_j+1} \dots \sigma_{i_t}(c,b)\right).
$$
Using equation \eqref{eq.tau+} and \eqref{eq.tau-} for $T_i^{\pm1}(b)$,
we see that $z_j$ is a special function. Let
\be
\label{eq.Xve}
H(a)= \Capam_m(a)  \tau_\beta(a)  \Cupam_m(a)
=  \Capam_m(a) z_1(a) z_2(a) \dots z_t(a)  \Cupam_m(a) .
\ee
Then $H: \BZ^k \to \hAss_{2m}$, is a product of special functions,
hence is a special function (see Lemma \ref{r.2}). By \eqref{eq.match},
we have

$$
\Lcl(\beta,a) =  q^{\frac 1n \sum_{ij} \ell_{ij} |a_i| |a_j|}
1_\vartheta H(a) 1_\vartheta.
$$

Applying the evaluation map $\ev_n$ to both sides, using
Proposition \ref{r.Ladder} and the
normalization of $\tJ_L$ for the left hand side, we obtain that
$$
\tJ_L(e_{a_1},\dots,e_{a_k}) = \ev_n(H(a)).
$$
This proves \eqref{eq.m0}.

Let us have a closer look at the formula of $H$.
By \eqref{eq.tau+} and \eqref{eq.tau-}, $z_j$ has the form
\begin{align}
\label{eq.1f}
z_j &= \sum_{s_j \in \BZ} (-1)^{g_j(a,s_j)} q^{\ve_j h_j(a,s_j)}
E_{i_j}^{(f_j(a,s_j  ))} F_{i_j}^{(s_j)},
\end{align}
where $g_j$ is a quadratic function, and $h_j,f_j$ are linear functions.
From \eqref{eq.Xve}, it follows that $H$ has a presentation \eqref{eq.28},
where $s=(s_1,\dots,s_t)$, and
$$
Y^{(f(a,s))} = \Capam_m(a)
\left( \rprod_{j\in [1,t]} E_{i_j}^{(f_j(a,s_j  ))} F_{i_j}^{(s_j)} \right)
\Cupam_m(a).
$$

Assume $\|s \|_\infty > \|a \|_\infty$, i.e. there is $l$ such that
$|s_l| > \|a \|_\infty$. We can assume that $s_l >0$, since other wise
$s_l <0$ and the factor $F_{i_l}^{(s_l)}$ on the right hand side of
\eqref{eq.1f} is 0. We will show that the $(i_l+1)$-th component of the
weight of $F_{i_l}^{(s_l )} z$ is negative, where
$$
z =   \left( \rprod_{j\in [l+1,t]} E_{i_j}^{(f_j(a,s_j  ))}
F_{i_j}^{(s_j)} \right) \Cupam_m(a).
$$
This will prove that $(Y, f(a,s))$ is tail negative, since $i_l +1 >m$.
Note that
$$
\w (z) = \w (z_{l+1}(a) \dots z_t(a) \Cupam_m(a)) =
(c_1 -n,\dots, c_m -n, b'),
$$
where $b'$ is a permutation of $b$.
Since $\| b'\|_\infty  =\| b\|_\infty  = \| a\|_\infty$, we have
$$
\w_{i_l+1}(F_{i_l}^{(s_l )} z)=   - s_l + (b')_{i_l +1} < 0,
$$
which  completes the proof of the proposition.
\end{proof}

\begin{remark}
Our evaluation algorithm should be closely related to the variant of skew
Howe duality defined for so-called doubled Schur algebras in \cite{QS,QS2}.
\end{remark}

\subsection{An example: the trefoil knot}
\label{sub.trefoil}

Before we proceed further, let us illustrate Proposition \ref{r.main0}
by computing the invariant of the trefoil, and draw some useful
conclusions regarding $q$-holonomicity of the invariant.

We take
\be
\label{eq.E31}
\beta=\sigma_1^3 =\sigma_1 \sigma_1 \sigma_1, \qquad m=2,
\qquad \vartheta=(n,n,0,0) \,.
\ee
The link $L=\cl(\beta)$ is the right-handed trefoil knot, colored by one
$a\in \BN\cap [0,n-1]$, and $b=(a,a)$ (see Figure \ref{fig:trefoil}).

\begin{figure}[!hptb]
\begin{center}
\includegraphics[height=0.35\textheight]{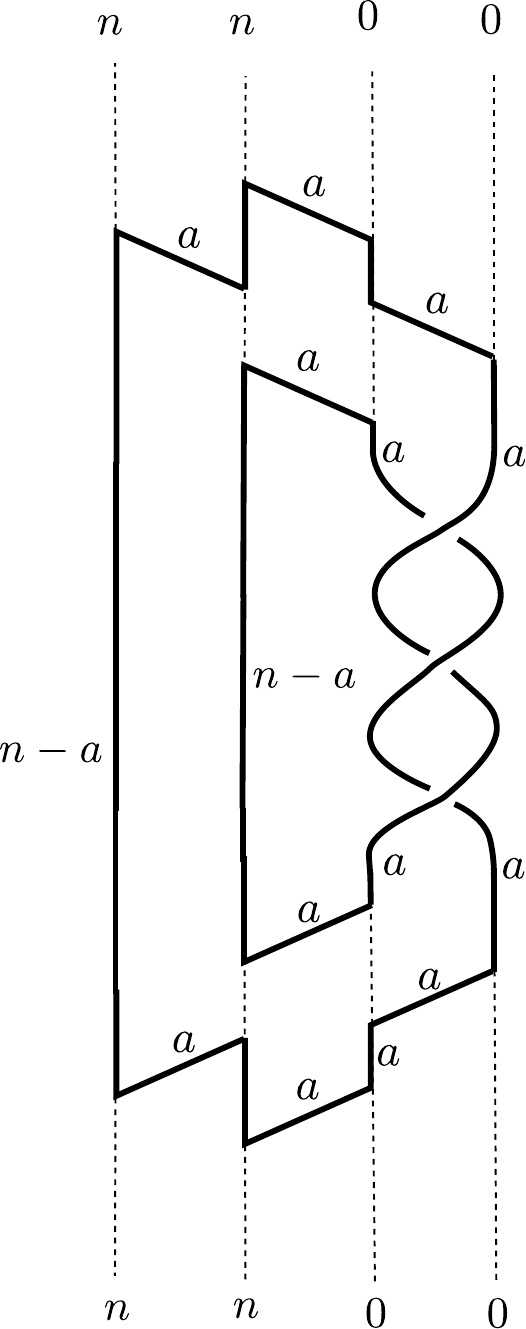}
\caption{The ladder closure of  braid $\beta=\sigma_1^3$.}
\label{fig:trefoil}
\end{center}
\end{figure}

By Proposition \ref{r.main0}, and equation~\eqref{eq.Xve} we obtain that
$\tJ_{3_1}^{\fsl_n}(e_a)=  \ev_n(H(a))$,
where
\be
\label{eq.Y31}
H(a)=
q^{3a} E_2^{(a)}E_1^{(a)}E_3^{(a)}E_2^{(a)}(T_3(b))^3
F_2^{(a)}F_3^{(a)}F_1^{(a)}F_2^{(a)}.
\ee
Using Equation \eqref{eq.tau+}, we replace each occurrence of $T_3(b)$
by a sum over the integers, and obtain the following triple sum formula
\begin{align}
\label{eq-exp1}
q^{-3a} H(a) =  \scs \xsum{s_1,s_2,s_3 \in \BZ}{}
(-q)^{s_1+s_2+s_3}E_2^{(a)}E_1^{(a)}E_3^{(a)}E_2^{(a)}
(E_3^{(s_1)}F_3^{(s_1)}E_3^{(s_2)}F_3^{(s_2)}E_3^{(s_3)}F_3^{(s_3)} )
F_2^{(a)}F_3^{(a)}F_1^{(a)}F_2^{(a)}.
\end{align}
This is an explicit form of special function for $H$.

Next, we use the commutation rules given in
equations~\eqref{eq.com1a}--\eqref{eq.com4a} to sort the  expression of
$H(a)1_{\vartheta}$, moving all the $E$'s to the right and all the $F$'s
to the left. Every time we move $\Eir$ (from the left) past an
$F_i^{(s)}$ (from the right), we obtain a 1-dimensional sum over
the integers. Then, use equation~\eqref{eq.Ei} to add some delta
functions in the sum. Finally, Equ.~\eqref{eq.8s}, which is explained
later in the proof of Proposition \ref{prop.tech1}, tells us to add
Heaviside functions $\He(k)$ (see Section \ref{sec.qholo}).
Doing so, we
eventually get the following formula for the quantum $\fsl_n$-invariant
of the trefoil colored by $e_a$ (the details are given in the appendix)

\begin{align}
\tJ_L^{\fsl_n}(e_a)
& =  \scs q^{3a} {n \brack a}
\sum_{s \in \BZ^6}  (-q)^{s_1+s_2+s_3} \He(a-s_1)\He(a-s_2) \He(a-s_3)
\He(a+s_1+s_2-s_4) \He(a+s_2+s_3-s_5) \He(\tau)\He(a-\tau) \nonumber \\
& \qquad
\scs {s_2+s_1 \brack s_4}
{s_2+s_3 \brack s_5}
{\tau+ s_2-s_6 \brack s_6} {s_1+s_2-s_4 \brack s_1}
{\tau \brack s_1+s_2-s_4}
{s_2+s_3-s_5 \brack s_3}
{\tau \brack s_2+s_3-s_5}
{n \brack a-\tau}
{n-\tau \brack a}
\label{eq.trefoil}
\end{align}
where $\tau=s_1+s_2+s_3-s_4-s_5-s_6$ and
$\und s=(s_1,\dots,s_6)\in \BZ^6$. Keep in mind the convention that
${ r \brack s} =0$ if $ s <0$.

Let us end this example with some observation. The above formula has the
form
\be
\label{eq.52}
\tJ_L^{\fsl_n}(e_a) = \sum_{s \in \BZ^6} F(a,s),
\ee
where $F(a,s)$ is a finite product of factors of the following shape
\begin{itemize}
\item[(i)]
$(\pm q)^{A(a,s)}$,
\item[(ii)]
$\He(A(a,s))$,
\item[(iii)]
quantum binomial ${A(a,s) \brack B(a,s)}$,
\item[(iv)]
quantum binomial ${n+ A(a,s) \brack B(a,s)} = {q^n; A(a,s) \brack B(a,s)}$,
where for $s,l \in \BZ$ we define
\be
\label{eq.qbn1}
{ x;  s \brack l} =\begin{cases} 0 \quad & \text{if } \ l < 0 \\
\prod_{j=1}^l
\frac{  x q^{ s -j+1}- x^{-1} q^{-s+j-1}  }{q^j - q^{-j}} &
\text{if } \ l \ge  0.
\end{cases}
\ee
\end{itemize}
Here $A(a,s)$ and $B(a,s)$ are $\BZ$-linear functions.
Moreover, for each {\em integer} value of $a$ and $n$, the sum on the
right hand side of \eqref{eq.52} is terminating in the sense that only
a finite number of terms are non-zero. The number of terms are bounded
by a polynomial function of $a$.

We will show that a similar formula exists for any framed oriented
link colored with $e_a$. But before we do so, let us recall what is a
$q$-holonomic function.


\section{$q$-holonomic functions}
\label{sec.qholo}

$q$-holonomic functions of one variable were introduced by the seminal paper
of Zeilberger \cite{Z90}. The class of $q$-holonomic functions resembles in
several ways and the class of holonomic $D$-modules, as is
acknowledged by conversations of Zeilberger and Bernstein
prior to the introduction of holonomic functions, \cite{Z90}. An extension
of the definition to $q$-holonomic functions with several variables was
given by Sabbah~\cite{Sabbah} using the language of homological algebra.
In this section we will review the definition of $q$-holonomic functions
of several variables, give examples, and list the closure properties of
this class under some operations. Our exposition follows Zeilberger
and Sabbah, and the survey paper of two of the authors~\cite{GL:survey}.

We should point out, however, that the precise definition of $q$-holonomic
functions is not used in the proof of Theorem~\ref{thm.1}. If the reader
wishes to take as a black box the examples of $q$-holonomic functions
given below, and their closure properties, then they can skip this section
altogether and still deduce the proof of Theorem~\ref{thm.1}.

\subsection{The quantum Weyl algebra}
\label{sub.weyl}

Let $V$ denote a fixed (not necessarily finitely generated) $\cA$-module,
where $\cA= \BZ[q^{\pm1}]$.
For a natural number $r$, let $S_r(V)$ be the set of all functions
$f: \BZ^r \to V$ and $S_{r,+}(V)$ the set of functions $f: \BN^r \to V$.
For $i=1,\dots, r$ consider the operators $\sL_i$ and $\sM_i$ which act
on functions $f \in S_r(V)$ by
\begin{align}
(\sL_if)(n_1,\dots, n_i, \dots, n_r) &= f(n_1, \dots, n_i+1, \dots , n_r)\\
(\sM_if)(n_1, \dots, n_r)&= q^{n_i} f(n_1, \dots, n_r).
\end{align}
It is clear that $\sL_i$, $\sM_j$ are invertible operators that satisfy
the $q$-commutation relations
\begin{subequations}
\begin{align}
\sM_i \sM_j &= \sM_j \sM_i
\label{eq.w1}\\
\sL_i \sL_j &=
\sL_j \sL_i
\label{eq.w2}\\
\sL_i \sM_j &= q^{\delta_{i,j}} \sM_j \sL_i
\label{eq.w3}
\end{align}
\end{subequations}
for all $i,j=1,\dots,r$.

\begin{definition}
\label{def.weyl}
The $r$-dimensional quantum Weyl algebra $\cW_r$ is the
$\cA$-algebra generated by
$\sL_1^{\pm 1}, \dots, \sL_r^{\pm 1}, \sM_1^{\pm 1}, \dots, \sM_r^{\pm 1}$
subject to the relations \eqref{eq.w1}--\eqref{eq.w3}.
Let $\cW_{r,+}$ be the subalgebra of $\cW_r$ generated by the non-negative
powers of $\sM_j, \sL_j$.
\end{definition}

Given $f \in S_r(V)$,
the annihilator $\Ann(f)$ (a left $\cW_r$ module) is defined by
\be
\label{eq.ann}
\Ann(f)=\{P \in \cW_r \,| Pf=0\}
\ee
This gives rise to a cyclic $\cW_r$-module $M_f$, defined by
$M_f = \cW_r f \subset S_r(V)$, and isomorphic to $\cW_r/\Ann(f)$.

\subsection{Definition of $q$-holonomic functions}
\label{sub.definition}

In this section we follow closely the work of Sabbah~\cite{Sabbah}.
Let $N$ be a finitely-generated $\cW_{r,+}$-module. Consider
the increasing filtration $\calF$ on $\cT_{r,+}$ given
\be
\label{eq.FN}
\calF_n \cT_{r,+} = \{\text{$\cA$-span of all monomials
$\sM^\alpha \sL^\beta$ with $\alpha,\beta \in \BN^r$
with total degree $\le n$}\} \,.
\ee
The filtration $\calF$ on $\cW_{r,+}$
induces an increasing filtration on $N$, defined by
$\calF_n N=\calF_n \cW_{r,+} \cdot N$. Note that $\calF_n \cW_{r,+}$,
and consequently $\calF_n N$, are  finitely-generated $\cA$-modules
for all natural numbers $n$.
An analog of Hilbert's theorem for this non-commutative setting holds:
the dimension of the $\BQ(q)$-vector space $ \BQ(q) \otimes_{\cA}  \calF_n N$
is a polynomial in $n$, for big enough $n$. The degree of this polynomial
is call   the dimension of $N$, and is denoted by $d(N)$.

In \cite[Theorem 1.5.3]{Sabbah} Sabbah proved that $d(N) = 2r -\codim(N)$,
where
$$
\codim(N) = \min\{ j\in \BN \mid \Ext^j_{\cW_{r,+}}(N, \cW_{r,+}) \neq 0 \}.
$$
Sabbah also proved that $d(N) \ge r$ if $N$ is a non-zero and does not have
monomial torsion. Here a monomial torsion is a monomial  $P$ in  $\cW_{r,+}$
such that $Px=0$ for a certain non-zero $x\in N$.
It is easy to see that $N$ embeds in the $\cW_r$-module
$\cW_r\otimes_{\cW_{r,+}} N$ if and only if $N$ has no monomial torsion.
Throughout the paper, we will assume that all $\cW_{r,+}$-modules do not
have monomial torsion.

\begin{definition}
\rm{(a)}
A $\cW_{r,+}$-module $N$ is $q$-holonomic if $N=0$ or $N$ is
finitely-generated, does not have monomial torsion, and $d(N)=r$.
\newline
\rm{(b)}
An element $f\in N$, where $N$ is a $\cW_{r,+}$-module, is $q$-holonomic
over $\cW_{r,+}$ if $\cW_{r,+}\cdot f$ is a $q$-holonomic $\cW_{r,+}$-module.
\end{definition}

The above definition defines $q$-holonomic $\cW_{r,+}$ modules, and our
next task is to define $q$-holonomic $\cW_r$ modules.
Let $M$ be a non-zero finitely-generated left $\cT_r$-module.
Following \cite[Section 2.1]{Sabbah}, the codimension and dimension of $M$
are defined in terms of homological algebra by
$$
\codim(M) = \min\{ j\in \BN \mid \Ext^j_{\cW_r}(M, \cT_r) \neq 0 \},
\quad \dim(M) = 2r - \codim(M).
$$
\noindent
The key Bernstein inequality (proved by Sabbah \cite[Thm.2.1.1]{Sabbah}
in the $q$-case) asserts that if $M\neq 0$ is a finitely generated
$\cW_r$-module, then $\dim(M) \ge r$.

\begin{definition}
\rm{(a)}
A $\cW_r$-module $M$ is $q$-holonomic if either $M=0$ or $M$
is finitely-generated and $\dim(M)= r$.
\newline
\rm{(b)}
An element $f\in M$, where $M$ is a $\cW_r$-module, is
$q$-holonomic over $\cW_r$ if $\cW_r\cdot f$ is a $q$-holonomic $\cW_r$-module.
\end{definition}

Next we compare $q$-holonomic modules over $\cW_r$ versus over $\cW_{r,+}$.
The following proposition was proven in \cite[Sec.3]{GL:survey}
Next we compare $q$-holonomic modules over $\cW_r$ versus over $\cW_{r,+}$.
using Sabbah~\cite[Cor.2.1.4]{Sabbah}.

\begin{proposition}
\label{prop.WWr}
Suppose $f\in M$, where $M$ is a $\cW_r$-module. Then $f$ is $q$-holonomic
over $\cW_r$ if and only if it is $q$-holonomic over $\cW_{r,+}$.
\end{proposition}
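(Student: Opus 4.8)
The plan is to realize $\cW_r$ as an Ore localization of $\cW_{r,+}$ and to show that passing to this localization preserves $q$-holonomicity. Let $S\subset\cW_{r,+}$ be the multiplicative set generated by $\sL_1,\dots,\sL_r,\sM_1,\dots,\sM_r$. By the $q$-commutation relations \eqref{eq.w1}--\eqref{eq.w3} each of these generators is normal, in the sense that it $q$-commutes with every generator up to a unit of $\cA$, so $S$ is a two-sided Ore set and the localization $S^{-1}\cW_{r,+}$ is flat over $\cW_{r,+}$. Since any monomial $\sM^\alpha\sL^\beta$ with $\alpha,\beta\in\BZ^r$ is of the form $s^{-1}(\sM^{\alpha'}\sL^{\beta'})$ for a suitable $s\in S$ and $\alpha',\beta'\in\BN^r$, one gets a canonical identification $\cW_r\cong S^{-1}\cW_{r,+}$.

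First I would translate the statement into module language. Put $N=\cW_{r,+}\cdot f$ and $N'=\cW_r\cdot f$, so that $f$ is $q$-holonomic over $\cW_{r,+}$ (resp. over $\cW_r$) precisely when $N$ (resp. $N'$) is holonomic. Because $M$ is a $\cW_r$-module, every element of $S$ acts invertibly on $M$, hence $M$ and its submodule $N$ are $S$-torsion-free and the map $N\to S^{-1}N$ is injective. Localizing the inclusion $N\hookrightarrow M$ and using that $M=S^{-1}M$, one identifies $S^{-1}N$ with $\cW_r N=\cW_r f=N'$ inside $M$; thus $N'\cong\cW_r\otimes_{\cW_{r,+}}N$. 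The proposition then reduces to the claim that $N$ is holonomic over $\cW_{r,+}$ if and only if $S^{-1}N$ is holonomic over $\cW_r$, the degenerate case being immediate since $N=0\iff f=0\iff N'=0$.

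The heart of the matter is dimension preservation under localization, which is the content of Sabbah~\cite[Cor.~2.1.4]{Sabbah}: for a finitely generated $\cW_{r,+}$-module $N$ without monomial torsion (guaranteed here by the standing hypothesis), $S^{-1}N$ is a finitely generated $\cW_r$-module and $\dim(S^{-1}N)=d(N)$. Granting this, the implication from $\cW_{r,+}$ to $\cW_r$ is straightforward: if $d(N)=r$ then finite generation is preserved by localization and $\dim(N')=d(N)=r$, so $N'$ is holonomic.

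For the reverse implication I expect the real obstacle to lie in establishing that $N$ is finitely generated over $\cW_{r,+}$, since $N'$ being cyclic over $\cW_r$ does not a priori force the smaller module $N=\cW_{r,+}f$ to be finitely generated. The plan here is to use that $\cW_{r,+}$ is Noetherian, being an iterated skew-polynomial extension of $\cA$, together with the existence of a good finitely generated $\cW_{r,+}$-lattice $\Lambda\subset N'$ attached to the holonomic $\cW_r$-module $N'$: as $f\in\Lambda$, the submodule $N=\cW_{r,+}f\subseteq\Lambda$ is finitely generated. Once $N$ is finitely generated and (by hypothesis) free of monomial torsion, dimension preservation yields $d(N)=\dim(S^{-1}N)=\dim(N')=r$, consistent with Sabbah's Bernstein bound $d(N)\ge r$, so $N$ is holonomic. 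This lattice-and-Noetherianity step, carried out in \cite[Sec.~3]{GL:survey}, is the part I would treat most carefully.
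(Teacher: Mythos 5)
Your proof is correct and follows essentially the same route as the paper: the paper gives no independent argument for this proposition, deferring to \cite[Sec.3]{GL:survey}, whose proof is exactly the one you outline --- identify $\cW_r$ with the Ore localization of $\cW_{r,+}$ at the multiplicative set generated by the $\sL_i,\sM_i$ and apply the dimension-preservation result of Sabbah \cite[Cor.2.1.4]{Sabbah} to $N=\cW_{r,+}\cdot f$ and $S^{-1}N=\cW_r\cdot f$. The only blemish is your final paragraph: the finite generation of $N$ over $\cW_{r,+}$ that you call ``the real obstacle'' is automatic, since $N=\cW_{r,+}\cdot f$ is cyclic by definition, so the lattice-and-Noetherianity detour is unnecessary and the reverse implication follows directly from dimension preservation together with the absence of monomial torsion (which, as you correctly note, holds because every element of $S$ acts invertibly on $M$).
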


The next corollary is taken from \cite[Sec.3]{GL:survey}.

\begin{corollary}
\label{cor.Z2N}
If $f \in S_r(V)$ is $q$-holonomic and $g \in S_{r,+}(V)$
is its restriction to $\BN^r$, then $g$ is $q$-holonomic.
\end{corollary}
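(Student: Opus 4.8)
The plan is to transport the statement entirely into the world of the positive Weyl algebra $\cW_{r,+}$, where both $f$ and $g$ live, and then pit the upper bound coming from a surjection against the lower bound coming from Bernstein's inequality. First I would dispose of the trivial case: if $g=0$ then $\cW_{r,+}\cdot g=0$ is $q$-holonomic by definition, so I may assume $g\neq 0$ (in particular $f\neq 0$). By hypothesis $f$ is $q$-holonomic over $\cW_r$, so Proposition~\ref{prop.WWr} shows that $f$ is $q$-holonomic over $\cW_{r,+}$; that is, the cyclic module $N:=\cW_{r,+}\cdot f\subset S_r(V)$ is finitely generated, has no monomial torsion, and satisfies $d(N)=r$.

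The key structural input is the restriction map $\rho\colon S_r(V)\to S_{r,+}(V)$, $\rho(h)=h|_{\BN^r}$, and I would check that it is a homomorphism of $\cW_{r,+}$-modules. This is precisely why the positive subalgebra is the correct setting: the generators of $\cW_{r,+}$ are the non-negative powers of $\sM_j$ and $\sL_j$, and each preserves $S_{r,+}(V)$ and commutes with $\rho$. Indeed, for $n\in\BN^r$ and the $j$-th coordinate vector $e_j$ one has $n+e_j\in\BN^r$, so the forward shift $\sL_j$ is well defined on $\BN^r$ and $\rho(\sL_j h)=\sL_j\rho(h)$, while $\sM_j$ (multiplication by $q^{n_j}$) manifestly commutes with restriction; the backward shift $\sL_j^{-1}$, which would move the boundary $\{n_j=0\}$ off $\BN^r$, is deliberately absent from $\cW_{r,+}$. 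Since $\rho$ commutes with every generator, it commutes with every $P\in\cW_{r,+}$, whence $\rho(Pf)=P\rho(f)=Pg$. Thus $\rho$ carries $N$ onto $M:=\cW_{r,+}\cdot g$ and is filtered, $\rho(\calF_n N)=\calF_n M$ for all $n$.

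It then remains to run the dimension estimate. Because $\rho\colon \calF_n N\to \calF_n M$ is surjective, tensoring with $\BQ(q)$ gives $\dim_{\BQ(q)}\bigl(\BQ(q)\otimes_\cA \calF_n M\bigr)\le \dim_{\BQ(q)}\bigl(\BQ(q)\otimes_\cA \calF_n N\bigr)$ for all $n$, and comparing the two Hilbert polynomials yields $d(M)\le d(N)=r$. For the reverse inequality I would invoke Sabbah's lower bound, namely that a non-zero finitely generated $\cW_{r,+}$-module without monomial torsion has dimension $\ge r$. Combining the two inequalities gives $d(M)=r$, and since $M=\cW_{r,+}\cdot g$ is cyclic, hence finitely generated, we conclude that $g$ is $q$-holonomic.

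The step I expect to require the most care is the hypothesis of the lower bound: one must know that $M=\cW_{r,+}\cdot g$ is free of monomial torsion before asserting $d(M)\ge r$. This is not automatic, since $S_{r,+}(V)$ itself carries $\sL_j$-torsion concentrated on the boundary hyperplanes $\{n_j=0\}$, so a submodule generated by a restricted function could a priori acquire such torsion. In the present framework I would invoke the standing convention of this section that all $\cW_{r,+}$-modules under consideration have no monomial torsion, which removes the difficulty; it is exactly this convention that makes the two-sided dimension estimate conclusive.
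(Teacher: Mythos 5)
Your reduction to $\cW_{r,+}$ and the surjection argument are correct and well executed: the restriction map $\rho\colon S_r(V)\to S_{r,+}(V)$ commutes with the generators of $\cW_{r,+}$ precisely because the inverse shifts are absent, so $M:=\cW_{r,+}\cdot g=\rho(N)$ with $N:=\cW_{r,+}\cdot f$, the map is filtered, and right-exactness of $\BQ(q)\otimes_{\cA}-$ gives $d(M)\le d(N)=r$ (using Proposition~\ref{prop.WWr} for $d(N)=r$). The genuine gap is your final step. To invoke Sabbah's lower bound $d(M)\ge r$ you must know that $M$ has no monomial torsion, and you cannot obtain this by ``invoking the standing convention'' of the section: that convention is a blanket hypothesis the authors impose on the modules they agree to consider, not a theorem about the specific module your construction produces. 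In fact $M$ can have monomial torsion. Take $r=1$ and $f=\delta_{n,0}\in S_1(\cA)$, which is $q$-holonomic over $\cW_1$ (delta functions are among the paper's elementary blocks); its restriction $g$ to $\BN$ satisfies $\sL g=0$ with $g\neq 0$, so $M=\cW_{1,+}\cdot g=\cA\, g$ has monomial torsion and $d(M)=0<1$. The inequality $d(M)\ge r$ you want is therefore false in general, and no argument can supply it.

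This example also shows that, read literally, the definition recorded in this paper (finitely generated, \emph{no monomial torsion}, and $d(N)=r$) cannot be the definition under which Corollary~\ref{cor.Z2N} holds: with it, the corollary itself fails for $f=\delta_{n,0}$. Note that the paper offers no proof of this corollary at all — it only cites \cite[Sec.3]{GL:survey} — and in that source $q$-holonomicity of a finitely generated $\cW_{r,+}$-module is the \emph{inequality} $d(N)\le r$, so that torsion modules such as $M$ above are admitted; Bernstein's inequality then says that equality holds in the torsion-free case, rather than being built into the definition. Under that definition your argument up through the upper bound is already a complete proof: $M$ is cyclic, hence finitely generated, and $d(M)\le d(N)=r$. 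So the correct repair is not to justify the lower bound but to delete it — stop at $d(M)\le r$, and the proof is done.
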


\begin{remark}
\label{rem.AR}
The definition of $q$-holonomic $\cA$-modules can be extended to
$q$-holonomic $\cR$-modules where $\cR$ is the ring
(and also an $\cA$-module)
\be
\label{eq.cR}
\cR=\BQ(q)[x^{\pm 1}] \,.
\ee
Proposition \ref{prop.WWr} and Theorems \ref{thm.prop} and
\ref{thm.multisum} below hold after replacing $\cA$ by $\cR$.
\end{remark}

\subsection{Properties of $q$-holonomic functions}
\label{sub.properties}

In this section we summarize some closure properties of $q$-holonomic
functions, whose proofs can be found in \cite[Sec.5]{GL:survey}.

\begin{theorem}
\label{thm.prop}
Suppose $f,g\in S_r(V)$ are $q$-holonomic functions. Then,
\newline
\rm{(a)}  $f+g$ is $q$-holonomic.
\newline
\rm{(b)} $fg$ is $q$-holonomic.
\newline
\rm{(c)} Restriction.
For a fixed $a\in \BZ$, the function $g\in S_{r-1}(V)$ defined by
$$
g(n_1,\dots,n_{r-1})= f(n_1,\dots,n_{r-1},a)
$$
is $q$-holonomic.
\newline
\rm{(d)} Extension. The function $h \in S_{r+1}(V)$ defined by
$$
h(n_1,\dots,n_{r+1})= f(n_1,\dots,n_{r})
$$
is $q$-holonomic.
\newline
\rm{(e)} Linear substitution. If $A \in \GL(r,\BZ)$ and $f \in S_r(V)$
is $q$-holonomic, so is the composition $A \circ f \in S_r(V)$.
\end{theorem}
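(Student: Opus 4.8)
The plan is to read (a)--(e) as the $q$-difference analogues of Bernstein's closure properties for holonomic $D$-modules, with Sabbah's dimension function $\dim(M)=2r-\codim(M)$ playing the role of Hilbert dimension. Throughout I pass freely between $\cW_r$ and $\cW_{r,+}$ via Proposition \ref{prop.WWr}, using whichever is convenient. The engine is three formal properties of $\dim$ on finitely generated modules, all flowing from the homological definition with $\codim(M)=\min\{j:\Ext^j_{\cW_r}(M,\cW_r)\neq 0\}$: (i) for a short exact sequence $0\to M'\to M\to M''\to 0$ one has $\dim(M)=\max(\dim M',\dim M'')$, so submodules and quotients have dimension at most that of $M$ (immediate from the long exact $\Ext$ sequence and the $\codim$ formula); (ii) for the external tensor product over $\cW_{2r}\cong\cW_r\otimes_{\cA}\cW_r$ one has $\dim(M\boxtimes N)=\dim M+\dim N$ (a K\"unneth/good-filtration computation in which the degrees of the two Hilbert polynomials add); and (iii) the Bernstein inequality $\dim(M)\ge r$ for $M\neq 0$, recorded above. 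By (iii) a nonzero module is $q$-holonomic exactly when it attains the minimal dimension $r$, so every bound ``$\dim\le r$'' we produce is automatically an equality.

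With these, (a), (d), (e) are short. For (a), the sum $M_f+M_g\subseteq S_r(V)$ is a quotient of $M_f\oplus M_g$, which has dimension $\max(r,r)=r$ by (i); hence $\dim(\cW_r(f+g))\le\dim(M_f+M_g)\le r$, so $f+g$ is $q$-holonomic or zero. For (e), an $A\in\GL(r,\BZ)$ induces an algebra automorphism $\phi_A$ of $\cW_r$ with $\sL_i\mapsto\prod_k\sL_k^{A_{ki}}$ and $\sM_i\mapsto\prod_k\sM_k^{(A^{-1})_{ik}}$; these images again satisfy \eqref{eq.w1}--\eqref{eq.w3}, the relevant exponent pairing being $(A^{-1}A)_{ij}=\delta_{ij}$. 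Pulling $f$ back along $A$ intertwines the action with $\phi_A$, so $M_{A\circ f}\cong{}^{\phi_A}\!M_f$ is $M_f$ with its action relabelled by an automorphism, which preserves $\dim$. For (d), the constant function $1\in S_1(V)$ is $q$-holonomic (its filtration has $\cA$-rank $O(n)$, so $\dim=1$), and $h=f\boxtimes 1$ has $\dim=r+1$ by (ii), the minimal value in $r+1$ variables.

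Part (b) I reduce to restriction and linear substitution. The external product $F(n,m)=f(n)g(m)\in S_{2r}(V)$ satisfies $M_F\cong M_f\boxtimes M_g$, so $\dim M_F=2r$ by (ii) and $F$ is $q$-holonomic over $\cW_{2r}$. Applying (e) with the integral change of variables $(n,k)\mapsto(n,n+k)$ gives the $q$-holonomic function $G(n,k)=f(n)g(n+k)$, and applying (c) $r$ times to set $k_1=\dots=k_r=0$ returns $G(n,0)=(fg)(n)$ as a $q$-holonomic function over $\cW_r$. Thus (b) follows once (c) and the additivity (ii) are known.

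The main obstacle is therefore (c), restriction, which carries the genuine content (the diagonal restriction used for (b) is a special case). Fixing $n_r=a$ gives a map $\rho:S_r(V)\to S_{r-1}(V)$ that is equivariant for the subalgebra $\cW_{r-1}$ generated by $\sL_i,\sM_i$ with $i\le r-1$ (it sends $\sM_r$ to the scalar $q^a$ but does \emph{not} commute with $\sL_r$), so $\cW_{r-1}g=\rho(\cW_{r-1}f)$ is a quotient of the cyclic submodule $\cW_{r-1}f\subseteq M_f$. The difficulty is that the naive filtration bound only yields $\dim(\cW_{r-1}g)\le r$, whereas one needs the drop to $r-1$; this is exactly the $q$-analogue of the classical fact that the inverse image of a holonomic module under a closed embedding stays holonomic. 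I would establish it by elimination plus specialization: holonomicity of $f$ forces, for each $i\le r-1$, a nonzero pure recurrence $P_i(\sM_1,\dots,\sM_r,\sL_i)\in\Ann(f)$ involving only the single shift $\sL_i$ (a standard consequence of $\partial$-finiteness of holonomic modules over $\BQ(q)(\sM_1,\dots,\sM_r)$); specializing $\sM_r=q^a$ produces $\bar P_i(\sM_1,\dots,\sM_{r-1},\sL_i)\in\Ann(g)$, giving $r-1$ independent recurrences and hence $\dim(\cW_{r-1}g)\le r-1$. The delicate point, and the crux of the whole theorem, is that specialization at $\sM_r=q^a$ must not destroy these recurrences through a vanishing leading coefficient; since $q$ is an indeterminate one can clear denominators and avoid the finitely many obstructed values, which is the $q$-difference incarnation of Bernstein's $b$-function argument.
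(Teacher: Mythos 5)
First, a point of reference: the paper does not actually prove Theorem \ref{thm.prop}. Section \ref{sub.properties} states it and refers to \cite[Sec.5]{GL:survey} for the proofs, which in turn rest on Sabbah's homological theory --- the Bernstein inequality together with the preservation of $q$-holonomicity under inverse images (restriction to hyperplanes and diagonals). Measured against that route, your parts (a), (d), (e) are correct and standard (subquotients of direct sums, external product with the constant function, twisting the module structure by the automorphism $\phi_A$), and your reduction of (b) to the external product plus restriction is also the classical one. So, exactly as you say, everything hinges on (c) --- and that is where your argument has a genuine gap; in fact two.

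The smaller gap: your escape from the vanishing-specialization problem (``since $q$ is an indeterminate one can \dots avoid the finitely many obstructed values'') misidentifies the obstruction. No value of $q$ is ever specialized; the danger is algebraic, namely that \emph{every} element of $\Ann(f)\cap\BQ(q)[\sM_1,\dots,\sM_r][\sL_i]$ might be divisible by $\sM_r-q^a$ (which is central in that subalgebra, since $i\le r-1$), in which case every specialized operator $\bar P_i$ is identically zero; ruling this out needs a real argument, not a genericity remark. The fatal gap: even granting nonzero pure recurrences $\bar P_i(\sM_1,\dots,\sM_{r-1},\sL_i)\in\Ann(g)$ for all $i\le r-1$, your conclusion ``hence $\dim(\cW_{r-1}g)\le r-1$'' is false in general. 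Having a nonzero pure recurrence in each variable ($\partial$-finiteness) does not bound the Bernstein dimension, because $S_r(V)$ has torsion over the Laurent polynomials in the $\sM$'s. Concretely, take two variables and $g(n_1,n_2)=\delta_{n_1,n_2}\,q^{n_1^3}$. Then $(\sM_1-\sM_2)g=0$, so $\Ann(g)$ contains nonzero pure elements in each variable, e.g.\ $(\sM_1-\sM_2)\sL_i$; yet $\calF_k\cW_{2,+}\cdot g$ contains the functions $\sM_1^{s}\sL_1^{b}\sL_2^{b-j}g$, which are supported on the pairwise disjoint diagonals $n_2=n_1+j$ and restrict there to $q^{sn_1+(n_1+b)^3}$. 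For distinct $(s,b)$ these exponents are distinct polynomials in $n_1$, so these roughly $k^3$ functions are $\BQ(q)$-linearly independent, giving $d(\cW_{2,+}g)\ge 3>2$: thus $g$ is $\partial$-finite but \emph{not} $q$-holonomic. So the last inference in your proof of (c) --- and with it (b), which you reduced to (c) --- collapses. This $\partial$-finite-versus-holonomic distinction is precisely the content that Sabbah's inverse-image theorem (equivalently, the $b$-function/$V$-filtration machinery you allude to but do not supply) provides; it cannot be replaced by elimination plus a count of recurrences.
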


Let $S_{r-1,1}(V)$ denote set of all functions
$f: \BZ^r \to V$ such that for every $(n_1,\dots,n_{r-1})\in \BZ^{r-1}$,
$f(n_1,\dots,n_r)=0$ for all but a finite number of $n_r$.

\begin{theorem}
\label{thm.multisum}
\rm{(a)} Suppose $f\in S_{r-1,1}(V)$ is $q$-holonomic. Then,
$g\in S_{r-1}(V)$, defined by
$$
g(n_1,\dots,n_{r-1})= \sum_{n_r \in \BZ} f(n_1,\dots,n_r) \,,
$$
is $q$-holonomic.
\newline
\rm{(b)} Suppose $f\in S_r(V)$ is $q$-holonomic. Then
$h\in S_{r+1}(V)$ defined by
\be
\label{eq.multi}
h(n_1,\dots,n_{r-1},a,b)=\sum_{n_r=a}^b f(n_1,n_2,\dots,n_r)
\ee
is $q$-holonomic.
\end{theorem}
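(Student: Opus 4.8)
The plan is to prove part (a)---closure under summation over a single variable---by a module-theoretic \emph{integration} argument, and then to deduce part (b) from (a) together with the closure properties already recorded in Theorem~\ref{thm.prop}. Throughout, the finite-support hypothesis ($f\in S_{r-1,1}(V)$) is what makes summation behave like integration: telescoped terms have vanishing boundary.

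For (a), write $N=\cW_r\cdot f\cong \cW_r/\Ann(f)$, which by hypothesis is $q$-holonomic, so $\dim(N)=r$. Note $N\subseteq S_{r-1,1}(V)$, since each generator of $\cW_r$ preserves finiteness of support in the last variable. Let $\cW_{r-1}\subset \cW_r$ be the subalgebra generated by $\sL_i^{\pm1},\sM_i^{\pm1}$ for $i\le r-1$; since $\sL_r$ commutes with every generator of $\cW_{r-1}$, left multiplication by $\sL_r-1$ is $\cW_{r-1}$-linear, and the cokernel $\textstyle\int_r N := N/(\sL_r-1)N$ is a $\cW_{r-1}$-module. The summation map $\Sigma\colon S_{r-1,1}(V)\to S_{r-1}(V)$ is $\cW_{r-1}$-linear, and telescoping gives $\Sigma\big((\sL_r-1)\phi\big)=0$, so $\Sigma$ descends to a $\cW_{r-1}$-linear map $\int_r N\to S_{r-1}(V)$ sending the class of $f$ to $g$. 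Hence $\cW_{r-1}\cdot g$ is a quotient of the submodule $\cW_{r-1}\cdot\bar f\subseteq \int_r N$, and it suffices to show $\int_r N$ is $q$-holonomic over $\cW_{r-1}$: subquotients of holonomic modules are again holonomic, because their dimension is bounded above by that of the ambient module and below by the Bernstein bound $r-1$.

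The heart of the matter is the estimate $\dim_{\cW_{r-1}}\big(\int_r N\big)\le r-1$ together with finite generation over $\cW_{r-1}$; this is the $q$-analogue of Kashiwara's preservation of holonomicity under direct image. Concretely I would fix a good filtration $\{N_j\}$ of $N$ for which $\dim_{\BQ(q)}\big(\BQ(q)\otimes_{\cA}N_j\big)$ is a polynomial $P(j)$ of degree $r$, and push it to the filtration $\bar N_j=\mathrm{image}(N_j)$ on $\int_r N$, compatible with the filtration of $\cW_{r-1}$. The right-exact sequence $N_j\xrightarrow{\sL_r-1}N_{j+1}\to \bar N_{j+1}\to 0$ yields $\dim \bar N_{j+1}\le \big(P(j+1)-P(j)\big)+\dim\ker(\sL_r-1|_{N_j})$, and $P(j+1)-P(j)$ has degree $r-1$. \textbf{The main obstacle} is controlling the second term: one must show that the $\sL_r$-invariants $\ker(\sL_r-1)\subseteq N$ do not accumulate faster than order $j^{r-1}$. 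This is exactly where holonomicity of $N$ is indispensable, and it follows from Sabbah's dimension theory $\dim=2r-\codim$ \cite{Sabbah} applied to that invariant submodule. Granting it, $\bar N_j$ grows of degree $\le r-1$; a module filtration of polynomially bounded degree $\le r-1$ forces $\int_r N$ to be finitely generated over the Noetherian algebra $\cW_{r-1}$ with dimension $\le r-1$, and Bernstein's inequality \cite[Thm.~2.1.1]{Sabbah} upgrades this to equality on any nonzero subquotient, completing (a).

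For (b), I would realize the finite sum as an infinite sum of a holonomic summand cut off by Heaviside functions. The characteristic function of $a\le n_r\le b$ is $\He(n_r-a)\,\He(b-n_r)$, and $\He$ is $q$-holonomic; by Theorem~\ref{thm.prop}(d),(e) its linear substitutions and the extension of $f$ by the dummy variables $a,b$ are $q$-holonomic, so by Theorem~\ref{thm.prop}(b) the product
\[
\tilde f(n_1,\dots,n_{r-1},a,b,n_r)=f(n_1,\dots,n_r)\,\He(n_r-a)\,\He(b-n_r)
\]
is $q$-holonomic in all $r+2$ variables. For each fixed $(n_1,\dots,n_{r-1},a,b)$ it has finite support in $n_r$, so $\tilde f\in S_{r+1,1}(V)$, and part (a) applied to the summation over $n_r$ gives that $h=\sum_{n_r}\tilde f$ is $q$-holonomic. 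Thus everything beyond the dimension estimate in (a) is bookkeeping with the already-established closure properties.
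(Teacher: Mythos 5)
The paper does not actually prove Theorem~\ref{thm.multisum}: it quotes it from \cite[Sec.5]{GL:survey}, where part (a) rests on Sabbah's direct-image theorem (the cokernel of $\sL_r-1$ acting on a $q$-holonomic module is $q$-holonomic over $\cW_{r-1}$) and part (b) is deduced from part (a) by cutting off with Heaviside functions. Your part (b) is exactly that argument and is correct: extend $f$ and $\He$ by dummy variables and invertible linear substitutions (Theorem~\ref{thm.prop}(d),(e)), multiply (Theorem~\ref{thm.prop}(b)), observe finite support in $n_r$, and invoke part (a). So the real content of your proposal is the attempt, in part (a), to reprove the Sabbah ingredient by a filtration argument; structurally this is the same route as the literature (quotient module $N/(\sL_r-1)N$, summation descends, subquotient closure), not a different one.

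That attempt has a genuine gap, and it sits exactly at the step you yourself call the heart of the matter: the bound $\dim\ker(\sL_r-1|_{N_j})=O(j^{r-1})$. Your justification --- ``Sabbah's dimension theory $\dim=2r-\codim$ applied to that invariant submodule'' --- does not work, for two reasons. First, $K=\ker(\sL_r-1)$ is \emph{not} a $\cW_r$-submodule of $N$: since $\sL_r\sM_r=q\,\sM_r\sL_r$, if $\sL_rv=v$ then $\sL_r(\sM_rv)=q\,\sM_rv\neq \sM_rv$, so $\sM_r$ does not preserve $K$ and the $\cW_r$-dimension theory cannot be applied to it. Second, even regarding $K$ as a $\cW_{r-1}$-module, what you need is an \emph{upper} bound of degree $r-1$ on $\dim(K\cap N_j)$ for the induced (not a priori good) filtration; Bernstein's inequality only gives lower bounds, and nothing you wrote rules out growth of order $j^{r}$. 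The gap closes with one observation you did not make: since $N=\cW_r\cdot f\subseteq S_{r-1,1}(V)$ (as you noted, the generators preserve finite support in $n_r$), any element of $N$ killed by $\sL_r-1$ is a function constant in $n_r$ with finite support in $n_r$, hence zero; so $K=0$ identically. With $K=0$ your estimate gives $\dim \bar N_{j+1}\le P(j+1)-P(j)$, of degree $\le r-1$, and the rest of your argument goes through --- provided you also (i) route the filtration theory through $\cW_{r,+}$ and Proposition~\ref{prop.WWr}, since good filtrations and Hilbert polynomials are defined in the paper only for finitely generated $\cW_{r,+}$-modules, and (ii) cite, rather than merely assert, the Bernstein-type criterion that an exhaustive compatible filtration of polynomial growth of degree $\le r-1$ forces finite generation and dimension $\le r-1$ (this lemma is nontrivial; it is proved in \cite{GL:survey} using \cite{Sabbah}).
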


\subsection{Elementary $q$-holonomic functions}
\label{sub.elementary}

A function $g:\ \BZ^s \to \BZ^r$ is {\em affine} if there is an
$r\times s$ matrix $A$ with integer entries and $b \in \BZ^r$ such
that $g(a) = A a + b$. If $b=0$, such a function is called
{\em linear}.

A function $f: \BZ^r \to \Zxq$ is called an {\em elementary block}
if $f$ is a finite product of a composition of a linear function
$\BZ^r \to \BZ^s$ (for $s=1,2$) with one of one of the following functions:
\begin{itemize}
\item[(i)]
$\BZ \to \Zq, \quad k \to (-1) ^k$, or $k \to q^k$, or $k \to \He(k)$,
\item[(iii)]
$\BZ^2 \to \Zq, \quad (k,l) \to \delta_{k,l}$, or $(k,l) \to \qbinom k l$,
or $(k,l) \to \qbinom {x;k} l$.
\end{itemize}
Observe that functions of the form (i) or (ii) above are
$q$-holonomic~\cite{GL:survey}.

A function $f: \BZ^r \to \Zxq$ is called {\em elementary} if
can be presented by a terminating sum
$$
f(a)= \sum_{b  \in \BZ^l} g(a,b),
$$
where $g: \BZ^{k+l}\to \Zxq$ is an elementary block. Here the sum is
terminating means for each $a$ there are only a finite number of $b$
such that $g(a,b)\neq 0$. Theorems \ref{thm.prop} and \ref{thm.multisum}
imply the following.

\begin{corollary}
\label{cor.elementary}
Every elementary block and every elementary function is $q$-holonomic.
\end{corollary}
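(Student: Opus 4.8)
The plan is to assemble Corollary~\ref{cor.elementary} entirely from the closure properties in Theorems~\ref{thm.prop} and~\ref{thm.multisum}, together with the fact (cited from \cite{GL:survey}) that each of the basic functions in (i)--(ii) of the definition is $q$-holonomic. The one piece not immediately available is a \emph{composition lemma}: if $\psi\colon \BZ^s \to \Zxq$ is $q$-holonomic and $g\colon \BZ^r \to \BZ^s$ is linear, then $\psi\circ g$ is $q$-holonomic. (All holonomicity here is over $\cR=\BQ(q)[x^{\pm 1}]$, as permitted by Remark~\ref{rem.AR}.)

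To prove this composition lemma I would use a graph/shear construction, since $g$ need not be invertible and so Theorem~\ref{thm.prop}(e) does not apply to $g$ directly. First extend $\psi$ to $\Psi\colon \BZ^{r+s}\to \Zxq$ by $\Psi(a,m)=\psi(m)$; this is $q$-holonomic by iterating the extension property, Theorem~\ref{thm.prop}(d). The shear $A\colon (a,m)\mapsto (a, m-g(a))$ lies in $\GL(r+s,\BZ)$ (its inverse adds $g(a)$ back), so by the linear-substitution property, Theorem~\ref{thm.prop}(e), the function $(a,m)\mapsto \Psi(a, m+g(a)) = \psi(m+g(a))$ is $q$-holonomic. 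Finally, restricting the last $s$ coordinates to $0$ via Theorem~\ref{thm.prop}(c) gives $a\mapsto \psi(g(a))=(\psi\circ g)(a)$, which is therefore $q$-holonomic.

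With the lemma in hand, an elementary block is by definition a finite product of functions $\psi\circ g$ where $\psi$ is one of the listed basic functions; each factor is $q$-holonomic by the lemma, and the product is $q$-holonomic by Theorem~\ref{thm.prop}(b), proving the first assertion. For an elementary function $f(a)=\sum_{b\in \BZ^l} g(a,b)$ with $g$ an elementary block, $g$ is $q$-holonomic by what we have just shown, and I would sum out $b_1,\dots,b_l$ one variable at a time using Theorem~\ref{thm.multisum}(a). The termination hypothesis---that for each $a$ only finitely many $b$ satisfy $g(a,b)\neq 0$---supplies the finite-support condition $S_{\bullet,1}(V)$ required at each stage, since global termination over $b\in\BZ^l$ forces termination in a single variable once the remaining ones are fixed; hence the iterated sum remains $q$-holonomic.

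The main obstacle is precisely the composition lemma, and within it the non-invertibility of $g$: one cannot invoke linear substitution on $g$ itself, and the graph embedding $a\mapsto(a,g(a))$ followed by the unimodular shear is what converts the problem into an application of Theorem~\ref{thm.prop}(c)--(e). The only other point needing care is confirming that the finite-support hypothesis of Theorem~\ref{thm.multisum}(a) is preserved through the iterated summation, which I expect to be routine once the global termination condition is unwound variable by variable.
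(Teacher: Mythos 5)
Your proof is correct and takes essentially the same route as the paper, which gives no details at all beyond asserting that Theorems~\ref{thm.prop} and~\ref{thm.multisum} imply the corollary. The one ingredient you supply explicitly --- the composition lemma proved by extension (Theorem~\ref{thm.prop}(d)), a unimodular shear (Theorem~\ref{thm.prop}(e)), and restriction (Theorem~\ref{thm.prop}(c)), needed because part (e) alone only covers invertible substitutions --- together with your variable-by-variable unwinding of the termination hypothesis for Theorem~\ref{thm.multisum}(a), is precisely the routine detail the paper leaves implicit.
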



\section{Proof of Theorem \ref{thm.1}}
\label{sec.thm1}

\subsection{Evaluation of monomials is $q$-holonomic}\label{sec:evalqh}

For $n\in \BZ$ let $\EV_n: \Zxq \to \BQ(q)$ be the $\Qq$-algebra
homomorphism defined by
\be
\label{eq.EV}
\EV_n(f) = f|_{x= q^n}.
\ee

The next lemma recovers an element of $\Zxq$ from its evaluations.

\begin{lemma}
\label{r.uni1}
Suppose that $f,g \in \Zxq$ satisfy
$\EV_n(f)= \EV_n(g)$ for infinitely many $n$. Then $f=g$.
\end{lemma}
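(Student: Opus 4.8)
The plan is to set $h = f - g \in \Zxq$ and reduce the statement to showing that a Laurent polynomial in $x$ over the field $\BQ(q)$ which is killed by $\EV_n$ for infinitely many $n$ must vanish identically. First I would clear the negative powers of $x$: since $h$ involves only finitely many monomials $x^j$, multiplying by $x^N$ for $N$ large enough produces an honest polynomial $P(x) = x^N h \in \BQ(q)[x]$, and for every integer $n$ one has $\EV_n(P) = q^{nN}\,\EV_n(h)$. Because $q^{nN}$ is a unit in $\BQ(q)$, the vanishing $\EV_n(h) = 0$ is equivalent to $P(q^n) = 0$, so $P$ vanishes at $q^n$ for the same infinite set of integers $n$ on which $f$ and $g$ agree.

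The crucial observation is then that the scalars $q^n \in \BQ(q)$ are pairwise distinct as $n$ ranges over the integers, since $q$ is transcendental over $\BQ$ (an equality $q^n = q^m$ in the field $\BQ(q)$ forces $n = m$). Hence $P$, regarded as a polynomial in the single variable $x$ over the field $\BQ(q)$, has infinitely many distinct roots. As a nonzero polynomial over a field has at most $\deg P$ roots, this forces $P = 0$, and therefore $h = 0$, i.e.\ $f = g$.

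I do not expect any genuine obstacle: the result is an instance of the principle that a polynomial identity over an infinite integral domain is detected by infinitely many evaluations at \emph{distinct} points. The only step deserving a moment's attention is the distinctness of the evaluation points $q^n$, which is exactly where the transcendence of $q$ over $\BQ$ is used and which guarantees that the infinitely many scalar conditions $\EV_n(f) = \EV_n(g)$ cannot collapse into finitely many.
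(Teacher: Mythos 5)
Your proof is correct and is essentially the paper's own argument: the paper also reduces to the fact that a nonzero Laurent polynomial in $x$ over $\BQ(q)$ has only finitely many roots, while the hypothesis forces $f-g$ to vanish at the infinitely many distinct points $q^n$. Your write-up merely makes explicit the two steps the paper leaves implicit (clearing negative powers of $x$ and the pairwise distinctness of the evaluation points $q^n$, which holds because $q$ is an indeterminate).
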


\begin{proof}
This follows from the fact that a Laurent polynomial in $x$ has at most
$k$ roots, where $k$ is the difference between the highest order and
the lowest order in $x$.
\end{proof}

Let $X=(X_1,\dots,X_k)$ be a sequence of elements of the set
$\calX_{2m}$ from equation \eqref{eq.calX}. Recall that for
$b=(b_1,\dots,b_k) \in \BZ^k$,  the monomial $X^{(b)}\in \Ass_{2m}$ and
its weight are defined in Section \ref{sub.free}. By convention,
$X^{(b)}=0$ if one of $b_i$ is negative. The goal of this subsection is
to calculate
$$
\ev_n(X^{(b)}) = \ev ( 1_\vartheta X^{(b)} 1_\vartheta)
$$
where $\vartheta = (n^m,0^m) \in \BZ^{2m}$.

\begin{proposition}
\label{prop.tech1}
Suppose $X=(X_1,\dots,X_k)$ is a sequence of elements of the set $\calX_{2m}$.
There exists a unique function
$$
Q_X:  \BZ^k \to \BQ(q)[x^{\pm 1}]
$$
such that for all $b \in \BZ^k$,
\be
\label{eq.a3a}
\ev_n(X^{(b)}) = \EV_n(Q_X(b)).
\ee
Moreover, $Q_X$ is an elementary function given by
\be
\label{eq.sum1}
Q_X(b) = \sum_{j\in \BZ^l} F_X(b,j)
\ee
for certain $ l \in \BN$ and elementary block $F_X: \BZ^{k+l} \to \Zxq$.
In addition,
\begin{itemize}
\item[(i)]
$F_X(b,j)=0$ if $\|j \|_\infty > \|b \|_\infty$ (which implies the
sum \eqref{eq.sum1} is terminating), and
\item[(ii)]
$F_X(b,j)=0$ if $(X,b)$ is tail-negative if one of the component
of $b$ is negative.
\end{itemize}
\end{proposition}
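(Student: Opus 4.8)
The plan is to dispatch uniqueness at once and then to produce $Q_X$ explicitly by running the sorting algorithm of Section~\ref{sub.plan} symbolically in $n$. For uniqueness, fix $b$: the scalar $\ev_n(X^{(b)})$ is pinned down for every $n$, so if $Q_X$ and $Q_X'$ both satisfy \eqref{eq.a3a} then $\EV_n(Q_X(b))=\EV_n(Q_X'(b))$ for all $n$, whence $Q_X(b)=Q_X'(b)$ by Lemma~\ref{r.uni1}.

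For existence I would fix the word $X=(X_1,\dots,X_k)$ (leaving the exponent vector $b$ and the rank $n$ free) and compute $1_\vartheta X^{(b)}1_\vartheta$, with $\vartheta=(n^m,0^m)$, by repeatedly taking the rightmost $E$-letter and sliding it to the right. The key structural point is that the combinatorial pattern of the sort---which letters get commuted past which---is determined by $X$ alone and is independent of $b$ and $n$; consequently the whole calculation can be carried out once, treating $q^n$ as a formal variable $x$, and it yields a single function $Q_X(b)\in\Zxq$ whose specialization at $x=q^n$ reproduces the genuine computation in $\Ladnm$ for each $n$, which is exactly \eqref{eq.a3a}. Concretely, the rightmost $E$-letter has only $F$'s to its right: it passes an $F_j$ of a different colour freely by \eqref{eq.com2a}, while on meeting an $F_i$ of the same colour, relation \eqref{eq.com1a} (in its terminating form \eqref{eq.com1}) replaces $E_i^{(r)}F_i^{(s)}$ by $\sum_t\qbinom{\la a,\alpha_i\ra+r-s}{t}F_i^{(s-t)}E_i^{(r-t)}$, producing one integer summation variable $t$ with $0\le t\le\min(r,s)$ and one $q$-binomial whose top is a linear form in $b$, in the summation variables, and in $n$. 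When $n$ occurs in that form the binomial is of type~(iv), i.e.\ $\qbinom{x;A}{B}$ after $x=q^n$ (cf.\ \eqref{eq.qbn1}), and otherwise of type~(iii); the occurrence of $n$ is linear because it enters only through the entries $n$ of $\vartheta$. When the travelling $E_i$ reaches the right-hand $1_\vartheta$, relation \eqref{eq.Ei} forces its exponent to vanish, which I record as a factor $\delta_{A,0}$; the companion relation $1_\vartheta F_i=0$ of \eqref{eq.Ei} then forces the residual pure product of $F$'s (necessarily of total weight zero) to be trivial, leaving the scalar. Finally, each intermediate weight must lie in $[0,n]^{2m}$ for its idempotent to survive in the Schur quotient, and I insert the positivity Heavisides accordingly; these subsume the factor $\sH(X,b)$ of \eqref{eq.HH}.

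Assembling the algorithm, $\ev_n(X^{(b)})$ becomes a terminating sum over the collection $j$ of all the variables $t$ of a product of Kronecker deltas, Heaviside functions, and $q$-binomials of types~(iii) and~(iv), every argument being a $\BZ$-linear form; this is precisely an elementary block in the sense of Section~\ref{sub.elementary}, so $Q_X(b)=\sum_j F_X(b,j)$ is an elementary function (in particular $q$-holonomic by Corollary~\ref{cor.elementary}). Property~(i) follows from the same bookkeeping: each crossing contributes $0\le t\le\min(r,s)$, and every $E$- and $F$-exponent can only decrease under \eqref{eq.com1a}, so all exponents and summation variables stay bounded by $\|b\|_\infty$ and $F_X(b,j)$ vanishes once $\|j\|_\infty>\|b\|_\infty$. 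Property~(ii) is read off from the inserted Heavisides together with Lemma~\ref{r.00} and \eqref{eq.tail}: the factor $\sH(X,b)$ (and, for the sign constraint, the convention $X^{(b)}=0$ when some $b_i<0$) kills every summand when $(X,b)$ is tail-negative or a component of $b$ is negative.

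The main obstacle is making the symbolic, uniform-in-$n$ computation rigorous rather than merely plausible: one must verify that the pattern of commutations really depends on $X$ alone, that $n$ enters the binomial and delta arguments only through its linear occurrence in $\vartheta$ (so that the single substitution $x=q^n$ is legitimate and generates exactly the type-(iv) binomials), and above all that the genuinely $n$-dependent Schur-quotient truncation $0\le a_i\le n$ is encoded on the nose---the lower bounds by the Heavisides of $\sH$ and the upper bounds automatically by the type-(iv) binomials $\qbinom{x;A}{B}$, which vanish at $x=q^n$ precisely when a weight would exceed $n$. Checking that the $\delta$- and $\He$-factors produced by the annihilation and positivity conditions match these vanishings exactly, rather than only up to terms that incidentally cancel, is the delicate point; the worked computation culminating in \eqref{eq.trefoil} is the template to follow.
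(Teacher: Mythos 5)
Your algorithm is the paper's algorithm and your uniqueness argument (Lemma~\ref{r.uni1}) is identical, but the proof has a genuine gap exactly where you flag it, and the mechanism you propose for closing it is not the right one. You ask for a \emph{termwise} match: that each summand produced by the symbolic sort vanish precisely when the Schur-quotient truncation $0\le a_i\le n$ kills the corresponding term, with lower bounds encoded by Heavisides and upper bounds encoded by vanishing of the type-(iv) binomials $\qbinom{x;A}{B}$ at $x=q^n$. That dictionary fails already in small examples: take $m=2$, $X=(E_1,F_1)$, $b=(r,s)$. The truncation that kills $F_1^{(s)}1_\vartheta$ for $s>0$ (the second entry of the weight becomes $n+s>n$) is an upper-bound truncation, yet no type-(iv) binomial appears anywhere in the sorted formula, because $\langle\vartheta,\alpha_1\rangle=n-n=0$, so \eqref{eq.com1a} produces the $n$-free binomial $\qbinom{r-s}{t}$; after the annihilation deltas force $t=r=s$, the vanishing is carried by the purely algebraic factor $\qbinom{0}{r}$. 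In general one cannot predict which factor ``encodes'' which truncation, and the statement that is actually true (and actually needed) is only an aggregate one: the full sum $Q_X(b)$ specializes correctly at $x=q^n$, not summand by summand.

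The device that makes the uniform-in-$n$ computation rigorous, and which your write-up lacks, is to prove \eqref{eq.a3a} by induction on the word: induction on $k$ and, for fixed $k$, downward induction on the position $l$ of the rightmost $E$-letter. A single application of \eqref{eq.com2a}, or of \eqref{eq.com1a} (valid in the Schur quotient, yielding \eqref{eq.a3} with binomial top $n\delta(r,m)+\Lin(b)$, which is where the variable $x$ enters), or of \eqref{eq.Ei}, rewrites $\ev_n(X^{(b)})$ as a finite sum of evaluations $\ev_n(Y^{(b')})$ of words $Y$ already covered by the inductive hypothesis; defining $F_X$ by \eqref{eq.8s} then gives \eqref{eq.a3a} at once, because any truncation effect (an intermediate weight with an entry above $n$, or a negative entry among the first $m$ components) forces $\ev_n(Y^{(b')})=0$ and hence, by the inductive hypothesis, $\EV_n\bigl(Q_Y(b')\bigr)=0$ \emph{in aggregate} --- no bookkeeping of which factor vanishes is ever required. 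The only truncation recorded explicitly is tail-negativity, via the Heaviside factor $\sH(X,b)$ and Lemma~\ref{r.00}, and it is recorded not to make \eqref{eq.a3a} true but to secure properties (i) and (ii), which are what later make the sums in Theorem~\ref{thm.2} terminating. So keep your sorting algorithm, but replace ``run the computation once, symbolically in $n$'' by this induction; that is the missing step, and with it the delicate point you isolate simply evaporates.
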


\begin{proof}
The uniqueness follows from Lemma \ref{r.uni1}. Let us prove the existence.

The idea is to move the $E_i$ to the right of $F_j$ using Equations
\eqref{eq.com1a} and \eqref{eq.com2a} (this creates a sum of a product
of $q$-binomials) and then use Equation \eqref{eq.Ei},
which creates a product of $\delta$-functions. Besides, we insert Heaviside
functions to make the sum terminating. The result is an elementary
function. Now we give the details of the proof.

Let $l \leq k$ be the maximal index such that $X_l\in\{ E_1,\dots, E_{2m-1}\}$.
We use induction on $k$, then induction on $l$.
If $k=0$ the statement is obvious.

For fixed $k$, we use induction on $l$, beginning with $l=k$ and going down.

(a) Suppose $l=k$. Recall that $\vartheta=(n^m, 0^m)$.
Using \eqref{eq.Ei}, we have
$$
X^{(b)} 1_{\vartheta}= \delta_{b_k,0} Y^{(b')} 1_{\vartheta},
$$
where $Y=(X_1,\dots, X_{k-1})$ and $b'=(b_1,\dots,b_{k-1})$. For $Y$
the statement holds. Define
$$
F_X(b,j):= F_Y(b',j) \delta_{b_k,0}, \quad
Q_X(b)= \sum_{j \in \BZ^l} F_X(b,j)
$$
Then $F_X(b,j)$ is an elementary summand. Both statements (i) and
(ii) for $F_X(b,j)$ follow immediately from those for $F_Y(b',j)$.
Then $Q_X$ is an elementary $q$-holonomic function, and \eqref{eq.a3a} holds.

(b) Suppose $l<k$. Assume that $X_l=E_r$ and $X_{l+1}=F_s$. Let
$Y=(Y_1,\dots,Y_k)$ be the sequence defined by $Y_i=X_i$ for all $i$
except $Y_{l}= X_{l+1}$ and $Y_{l+1}= X_l$. By induction, the statement holds
for $Y$, and we can define elementary summand $F_Y(b,j)$ for
$(b,j) \in \BZ^{k+l}$. Consider two cases.

Case 1: $r\neq s$. Because $E_r F_s = F_s E_r$, we have
$X^{(b)} = Y^{(b')}$ where $b'$ is obtained from $b$ by swapping
the $l$-th and $(l+1)$-components. This case is reduced to the case of $Y$
by defining $F_X(b,j)= F_Y(b', j)$.

Case 2: $r=s$. We have
\begin{align*}
X^{(b)} &
=  X_{\lef}\, \big( E_r^{(b_l)} F_r^{(b_{l+1})}\big) \, X_{\rig}
\end{align*}
where
$$
X_\lef=  \rprod_{j\in [1,l-1]} X_j^{(b_j)}, \quad X_\rig =
\rprod_{j\in [l+2,k]} X_j^{(b_j)}.$$
We have  $X_\rig 1_\vartheta \in \Hom_{n\Lad_{2m}}(\mu, \vartheta)$, where
$$
\mu = \vartheta + \w(X_\rig) = \vartheta -\sum_{j=l+2}^k b_j \alpha_{i_j}.
$$
Here the index $i_j$ is defined so that $X_j= F_{i_j}$ for $j>l$.
Using  \eqref{eq.com1}, we have
\begin{align}
X^{(b)} 1_\vartheta
&= \sum_{t\in \BZ}\qbinom{ \langle \mu, \al_r \rangle
+ b_l - b_{l+1}}{t}  X_{\lef} F_r^{(b_{l+1}-t)} E_r^{(b_l-t)}  X_{\rig} 1_\vartheta
\nonumber \\
&= \sum_{t\in \BZ}\qbinom{ \langle \mu, \al_r \rangle
+ b_l - b_{l+1}}{t}  Y^{(b')} 1_\vartheta,
\label{eq.a3}
\end{align}
where $b'=(b'_1,\dots,b'_k)$ such that $b'_i= b_i$ for all $i$ except
for $i=l,l+1$,  with $b'_{l}= b_{l+1}-t, b'_{l+1}=b_l-t$. Clearly $b'$ is
a linear function of $(b,t)$.

Note that $\langle \vartheta, \al_r\ra= n \delta(r,m)$. From the definition
of $\mu$,
\be
\nonumber
\langle \mu, \al_r \rangle + b_l - b_{l+1}= n \delta(r,m) + \Lin(b)
\ee
where $\Lin(b) = \langle \w(X_\rig), \al_r\ra + b_l - b_{l+1}$ is a
$\BZ$-linear form of $b$. For $j\in \BZ^{l+1}$ we write $j=(j', t)$,
i.e. $t$ is the last component of $j$. For $b\in \BZ^k$ and
$j\in \BZ^{l+1}$, define

\be
\label{eq.8s}
F_X(b,j) = \begin{cases}\qbinom{ x; \Lin(b)}{t} F_Y(b', j')
\, \sH(X,b)  \quad &\text{if } \ r=m
\\
\qbinom{ \Lin(b)}{t} F_Y(b', j')\,  \sH(X,b)   &
\text{ if } \ r \neq m, \end{cases}
\ee
where $\sH(X,b)$, defined by \eqref{eq.HH}, is an elementary
function of $b$.

Then $ F_X(b,j)$ is an elementary function. Let us prove (i) and (ii),
which claim $F_X(b,j)=0$ under certain conditions. If $t<0$, then the
first factor on the right hand sides of \eqref{eq.8s} is 0. Hence we will
assume $t\ge0$ in what follows.

(i) Suppose $\|j\|_\infty > \|b\|_\infty$. Then either
$\|j'\|_\infty > \|b\|_\infty $ or $|t| > \|b\|_\infty$.  In the first
case, $\|j'\|_\infty > \|b\|_\infty \ge  \|b'\|_\infty$, and
 $F_Y(b', j')=0$.
In the latter case, the $l$-th component of $b'$ is negative. By (ii) we
have $F_Y(b', j')=0$. Hence $F_Y(b, j)=0$.

(ii) First assume that one of the component of $b$ is negative.
 Then  one of the component of $b'$ is negative. Hence
$F_Y(b', j')=0$, implying $F_X(b, j)=0$.

Now assume that $(X,b)$ is tail-negative. Then the third factor in the
right hand sides of \eqref{eq.8s} is 0. Hence $F_X(b, j)=0$.

Let us prove \eqref{eq.a3a}. If $(X,b)$ is tail-negative, then
both sides of \eqref{eq.a3a} are 0, by Lemma \ref{r.00} and the property of
$\sH(X,b)$.
Assume now $(X,b)$ is not tail-negative. Then $\sH(X,b)=1$, and
\eqref{eq.a3a} follows from \eqref{eq.a3}, \eqref{eq.8s}, and the identity
\eqref{eq.a3a} applicable to $Y$.

This completes the proof of the proposition.
\end{proof}

\subsection{Coloring with partitions with one column}

\begin{theorem}
\label{thm.2}
Suppose $L$ is an oriented, framed link with $r$ ordered components.
There exists a unique function
$$
Q_L: \BN^r \to \Zxq
$$
such that for any integer $n \ge 2$ and
$a=(a_1,\dots,a_r) \in \BN^r \cap [0,n-1]^r$,
\be
\label{eq.holo1}
\tJ_L^{\fsl_n}(e_{a_1},  \dots, e_{a_r}) = \EV_n\left( Q_L(a)\right).
\ee
Moreover, $Q_L$ is elementary, hence a $q$-holonomic function.
\end{theorem}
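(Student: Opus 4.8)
The plan is to assemble the two technical inputs already in place: Proposition~\ref{r.main0}, which writes the normalized invariant as the evaluation of a single $Y$-special element of $\hAss_{2m}$, and Proposition~\ref{prop.tech1}, which computes the evaluation of an individual monomial as the restriction of an explicit elementary function. First I would fix a braid representative $\beta$ of $L$ on $m$ strands (Alexander's theorem), so that $L=\cl(\beta)$. Proposition~\ref{r.main0} then supplies a $Y$-special function $H\colon\BZ^r\to\hAss_{2m}$ with presentation \eqref{eq.28},
\[
H(a)=\sum_{s\in\BZ^t}(-1)^{g_1(a,s)}q^{g_2(a,s)}\,Y^{(f(a,s))},\qquad
\tJ_L^{\fsl_n}(e_{a_1},\dots,e_{a_r})=\ev_n(H(a))
\]
for all $a\in[0,n-1]^r$, where $g_1$ is quadratic, $g_2$ is linear, and (by inspection of \eqref{eq.1f} together with the homogeneous cap/cup exponents $b_k$) the components of $f$ are genuine $\BZ$-linear forms in $(a,s)$. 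Crucially, $(Y,f(a,s))$ is tail-negative whenever $\|s\|_\infty>\|a\|_\infty$.

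Next I would apply Proposition~\ref{prop.tech1} to the fixed sequence $Y$, obtaining the unique elementary function $Q_Y\colon\BZ^k\to\Zxq$ with $\ev_n(Y^{(b)})=\EV_n(Q_Y(b))$ for all $b\in\BZ^k$ and all $n$, and (by property~(ii) of that proposition) with $Q_Y(b)=0$ whenever $(Y,b)$ is tail-negative. I then set
\[
Q_L(a):=\sum_{s\in\BZ^t}(-1)^{g_1(a,s)}q^{g_2(a,s)}\,Q_Y(f(a,s)).
\]
The point that makes this well defined in $\Zxq$ is termination: combining the tail-negativity estimate of Proposition~\ref{r.main0} with the vanishing of $Q_Y$ on tail-negative arguments, the term $Q_Y(f(a,s))$ vanishes once $\|s\|_\infty>\|a\|_\infty$, so for each fixed $a$ the sum ranges over the finite set $\{s:\|s\|_\infty\le\|a\|_\infty\}$.

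To verify \eqref{eq.holo1}, fix $n\ge2$ and $a\in[0,n-1]^r$. Since $\ev_n$ is $\Qq$-linear and continuous on $\hAss_{2m}$ and annihilates every tail-negative monomial (Lemma~\ref{r.00}), only the finitely many $s$ with $\|s\|_\infty\le\|a\|_\infty$ survive, and
\begin{align*}
\tJ_L^{\fsl_n}(e_{a_1},\dots,e_{a_r})
&=\ev_n(H(a))=\sum_{s}(-1)^{g_1(a,s)}q^{g_2(a,s)}\,\ev_n\!\big(Y^{(f(a,s))}\big)\\
&=\sum_{s}(-1)^{g_1(a,s)}q^{g_2(a,s)}\,\EV_n\!\big(Q_Y(f(a,s))\big)=\EV_n\big(Q_L(a)\big),
\end{align*}
the last step holding because $\EV_n$ is a ring homomorphism applied to this same finite sum. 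Uniqueness of $Q_L$ is then immediate from Lemma~\ref{r.uni1}: for each fixed $a$, \eqref{eq.holo1} pins down $\EV_n(Q_L(a))$ for all $n>\|a\|_\infty$, i.e.\ at infinitely many $n$, hence determines $Q_L(a)\in\Zxq$.

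Finally I would check that $Q_L$ is elementary. Writing $Q_Y(b)=\sum_{j\in\BZ^l}F_Y(b,j)$ with $F_Y$ an elementary block gives the terminating double sum
\[
Q_L(a)=\sum_{(s,j)\in\BZ^{t+l}}(-1)^{g_1(a,s)}q^{g_2(a,s)}\,F_Y\big(f(a,s),j\big).
\]
In the summand, $q^{g_2(a,s)}$ is an elementary block, and $(-1)^{g_1(a,s)}$ equals $(-1)^{\tilde g_1(a,s)}$ for a $\BZ$-linear $\tilde g_1$ (a quadratic form agrees modulo $2$ with the linear form given by its diagonal), hence is an elementary block; and because $(a,s,j)\mapsto(f(a,s),j)$ is $\BZ$-linear, $F_Y(f(a,s),j)$ is the elementary block $F_Y$ precomposed with a linear map, which is again an elementary block. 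Thus the summand is an elementary block in $(a,s,j)$, so $Q_L$ is an elementary function, and Corollary~\ref{cor.elementary} yields $q$-holonomicity. The main obstacle here is not any single hard estimate but the simultaneous bookkeeping that keeps the braiding expansion terminating \emph{and} within the elementary class: termination rests entirely on the bound $\|s\|_\infty>\|a\|_\infty\Rightarrow$ tail-negative from Proposition~\ref{r.main0}, which is exactly what licenses interchanging $\ev_n$ with the a~priori infinite sum and guarantees $Q_L(a)\in\Zxq$, while preservation of elementariness hinges on the exponent map $f$ being genuinely linear rather than merely affine.
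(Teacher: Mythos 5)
Your proposal is correct and follows essentially the same route as the paper's proof: both invoke Proposition~\ref{r.main0} for the special-function presentation of $\tJ_L^{\fsl_n}$, Proposition~\ref{prop.tech1} to replace each $\ev_n$ of a monomial by the elementary sum $\sum_j F(b,j)$, the two vanishing properties (tail-negativity for $\|s\|_\infty>\|a\|_\infty$ and property (i) bounding $j$) to make the resulting double sum terminating, and Lemma~\ref{r.uni1} for uniqueness. Your extra care about interchanging $\ev_n$ with the infinite sum, replacing the quadratic sign exponent by its diagonal linear form, and checking that $f$ is genuinely linear only makes explicit points the paper leaves implicit.
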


\begin{proof}
The uniqueness follows from Lemma \ref{r.uni1}. Let us prove the existence.
Suppose $L$ is the closure of a braid $\beta$ on $m$ strands, as in
Section \ref{sec.Homf}. By Proposition \ref{r.main0}, there exists a
sequence $X=(X_1,\dots X_k)$ of elements from $E_i,F_i$ with
$i=1,\dots, 2m-1$ and linear functions $g_1, g_2:\BZ^{r+t} \to \BZ$ and
$f: \BZ^{r+t} \to \BZ^k$ such that
$$
\tJ_L^{\fsl_n}(e_{a_1}, , \dots, e_{a_r})) =
\sum_{s \in \BZ^t } (-1)^{g_1(a,s)} q^{g_2(a,s)} \ev_n
\left( X^{(f(a,s))} \right).
$$
By Proposition \eqref{prop.tech1}, there exists elementary summand
function $F_X: \BZ^{k+l}\to \Zxq$ such that
$$
\tJ_L^{\fsl_n}(e_{a_1}, , \dots, e_{a_r}) =
\sum_{s \in \BZ^t } (-1)^{g_1(a,s)} q^{g_2(a,s)} \eval_n
\left( \sum_{j \in \BZ^l} F_X(f(a,s),j)\right).
$$
By (i) of Proposition \ref{prop.tech1},
\begin{align}
\label{eq.5a}
F_X (f(a,s),j)& = 0 \quad \text{if   }\  \|j \|_\infty >
\|f(a,s) \|_\infty.
\end{align}

When  $\|s \|_\infty >  \|a \|_\infty$, $(X,f(a,s))$ is
tail-negative, see Proposition \ref{r.main0} . Hence, by (ii) of
Proposition \ref{prop.tech1},

\begin{align}\label{eq.5b}
F_X(f(a,s),j)& = 0 \quad \text{if   }\  \|s \|_\infty >
\|a \|_\infty.
\end{align}
Equations \eqref{eq.5a} and \eqref{eq.5b} imply that the sum
$$
Q_L(a):= \sum_{s \in \BZ^t} \sum_{ j \in \BZ^r}
(-1)^{g_1(a,s)} q^{g_2(a,s)} F(f(a,s),j)
$$
is terminating for each $a \in \BZ^k$.

Then $Q_L$ is elementary $q$-holonomic, and equation \eqref{eq.holo1} holds.
\end{proof}

\begin{remark}
\label{rem.NZQ}
By our construction, $Q_L$ vanishes in $\BZ^r \setminus \BN^r$.
\end{remark}

\begin{remark}
\label{rem.2alt}
Theorem \ref{thm.2} gives an alternative construction of the
colored \HOMFLY\ polynomial $W_L$ of a framed, oriented link colored by
partitions with one column. By the uniqueness,
$$
Q_L(a_1,\dots,a_r)= W_L(e_{a_1}, \dots, e_{a_r}).
$$
\end{remark}


\subsection{The Jacobi-Trudi formula}
\label{sub.jacobi-trudy}

In this section we explain how to extend the $q$-holonomicity of the
\HOMFLY\ polynomial of a link colored by partitions with one row to
the case of partitions with a fixed number of rows. The key idea is
the Jacobi-Trudi formula which expresses the Schur function
$s_\lambda$ of a partition $\lambda\in \cP_\ell$,
considered as an element of the algebra $\Lambda$, as a determinant of a
matrix whose entries are  partitions with one row. Observe that
for partitions with one row (resp. one column) we have $s_{(a)}=h_a$
(resp., $s_{(1^a)}=e_a$).

The Jacobi-Trudi formula \cite{Mc} states that if
$\lambda=(\lambda_1,\dots,\lambda_\ell)\in \cP_\ell$, then in $\Lambda$,
$$
s_{\lambda}=\det\left( (e_{\lambda^\dagger_i+j-i})_{i,j=1}^\ell\right)
$$
where the right hand side is an $\ell \times \ell$ determinant, with the
convention $e_0=1$ and $e_n=0$ for $n<0$.
For example, if $\lambda$ is a partition with three rows with $\lambda_1$,
$\lambda_2$ and $\lambda_3$ boxes, then we have
\begin{align*}
s_{\lambda_1,\lambda_2,\lambda_3} =&
-e_{\lambda_{1}+2} e_{\lambda_{2}} e_{\lambda_{3}-2}
+e_{\lambda_{1}+1} e_{\lambda_{2}+1} e_{\lambda_{3}-2}
+e_{\lambda_{1}+2} e_{\lambda_{2}-1} e_{\lambda_{3}-1} \\
& -e_{\lambda_{1}} e_{\lambda_{2}+1} e_{\lambda_{3}-1}
-e_{\lambda_{1}+1} e_{\lambda_{2}-1} e_{\lambda_{3}}+h
_{\lambda_{1}} e_{\lambda_{2}} e_{\lambda_{3}}.
\end{align*}

Let $L$ denote a framed, oriented link $L$ with $r$ ordered components,
and choose a partition $\lambda\in \cP_\ell$, and partitions
$\mu_2, \dots, \mu_r$. Then, part (c) of Proposition \ref{r.WL} implies that
\be
\label{eq.JT}
W_L(\lambda,\mu_1,\dots,\mu_r)= \sum_{\sigma \in \mathrm{Sym}_\ell}
\mathrm{sgn}(\sigma) W_{L'} (e_{\lambda_1+\sigma(1)-1},\dots,
e_{\lambda_\ell+\sigma(\ell)-\ell},\mu_1,\dots,\mu_r).
\ee
where $L'$ is the link obtain from $L$ by replacing the first framed
component of $L$ by  $\ell$ of its parallels.

\subsection{Proof of Theorem \ref{thm.1}}
\label{sub.end.thm1}

Fix a framed oriented link $L$ with $r$ ordered components.
Using the symmetry of the \HOMFLY\ polynomial from part (c) of
Proposition \ref{r.WL}, it suffices to show that the colored \HOMFLY\
polynomial of $L$, colored by partitions with at most $\ell$ rows, is
$q$-holonomic. Said differently, it suffices to show that the function
$W_L \circ (\iota^\dagger_\ell)^r : \BN^{r \ell} \to \Zxq$ is $q$-holonomic.
Let $\lambda=(\lambda_1,\dots,\lambda_{r \ell}) \in \BN^{r \ell}$.
Using Equation \eqref{eq.JT}, we have
$$
(W_L \circ (\iota^\dagger_\ell)^r)(\lambda) = \sum_{\sigma}
\mathrm{sgn}(\sigma) W_{\Delta L}(
e_{f_{\sigma,1}(\lambda)},\dots, e_{f_{\sigma,r \ell}(\lambda)})
$$
where the sum is over
$\sigma=(\sigma_1,\dots,\sigma_r) \in (\mathrm{Sym}_\ell)^r$,
$\mathrm{sgn}(\sigma)=\mathrm{sgn}(\sigma_1) \dots \mathrm{sgn}(\sigma_r)$
and $\Delta L$ is the link with $r \ell$ components
obtained from $L$ by replacing each component with its
$\ell$-th parallel and $f_{\sigma,i}: \BZ^{r \ell} \to \BZ$ are affine.
Parts (a) and (e) of Theorem \ref{thm.prop} together with Theorem
\ref{thm.2} imply that $W_L \circ (\iota^\dagger_\ell)^r)$ is a sum of
$q$-holonomic functions, thus is $q$-holonomic.
This concludes the proof of Theorem~\ref{thm.1}.
\qed

\subsection*{Acknowledgments}
S.G. and T.L. were supported in part by the National Science
Foundation grant DMS-14-06419. A.D.L. was partially supported by NSF grant
DMS-1255334, the John Templeton Foundation, and the Swiss MAP NCCR grant of
the Swiss National Science Foundation. We wish to thank Christian Blanchet,
Kenichi Kawagoe, Hugh Morton for helpful discussions, and Scott Morrison
for numerous conversations.


\appendix
\section{The formula for the invariant of the trefoil}
\label{sec.app1}

In this section we give the omitted details of how equation~\eqref{eq-exp1}
implies equation~\eqref{eq.trefoil}. We start with equation~\eqref{eq-exp1},
and observe that
\begin{align} \label{eq.tref}
 &  \scs \xsum{s_1,s_2,s_3 \in \BZ}{}
(-q)^{s_1+s_2+s_3}E_2^{(a)}E_1^{(a)}E_3^{(a)}E_2^{(a)}
E_3^{(s_1)}F_3^{(s_1)}E_3^{(s_2)}F_3^{(s_2)}E_3^{(s_3)}F_3^{(s_3)}
F_2^{(a)}F_3^{(a)}F_1^{(a)}F_2^{(a)}1_{(n,n,0,0)}
\\ \nonumber
  &=  \scs \xsum{s_1,s_2,s_3 \in \BZ}{}
(-q)^{s_1+s_2+s_3}E_2^{(a)}E_1^{(a)}E_3^{(a)}E_2^{(a)}
\big(E_3^{(s_1)}F_3^{(s_1)}E_3^{(s_2)}F_3^{(s_2)}E_3^{(s_3)}F_3^{(s_3)}
1_{(n-a,n-a,a,a)} \big)
F_2^{(a)}F_3^{(a)}F_1^{(a)}F_2^{(a)}1_{(n,n,0,0)}
\end{align}
where we used \eqref{eq.com0}  to include the idempotent in the middle term
(and the fact that $(n,n,0,0)-a\alpha_1-2a\alpha_2-a\alpha_3 = (n-a,n-a,a,a)$.
The term in parenthesis can be simplified as follows.
\begingroup
\allowdisplaybreaks
\begin{align}
& \scs E_3^{(s_1)}F_3^{(s_1)}E_3^{(s_2)}F_3^{(s_2)}\big(E_3^{(s_3)}F_3^{(s_3)}
1_{(n-a,n-a,a,a)} \big)
\nonumber \\
&\scs  \refequal{\eqref{eq.com1}}
E_3^{(s_1)}F_3^{(s_1)}F_3^{(s_2)}E_3^{(s_2)}F_3^{(s_3)}E_3^{(s_3)}
1_{(n-a,n-a,a,a)}
\nonumber \\
&\scs  \refequal{\eqref{eq.com0}}
\big(F_3^{(s_1)}E_3^{(s_1)}1_{(n-a,n-a,a,a)}\big)
F_3^{(s_2)}E_3^{(s_2)}F_3^{(s_3)}E_3^{(s_3)}
\nonumber \\
&\scs  \refequal{\eqref{eq.com1}}
F_3^{(s_1)}E_3^{(s_1)}F_3^{(s_2)}E_3^{(s_2)}F_3^{(s_3)}E_3^{(s_3)}
1_{(n-a,n-a,a,a)}
\nonumber \\
&\scs   \refequal{\eqref{eq.com0}}
F_3^{(s_1)} 1_{(n-a,n-a,a+s_1,a-s_1)}E_3^{(s_1)}F_3^{(s_2)} 1_{(n-a,n-a,a+s_2,a-s_2)}
E_3^{(s_2)} F_3^{(s_3)}1_{(n-a,n-a,a+s_3,a-s_3)} E_3^{(s_3)}
 \nonumber \\
 &\scs \refequal{\eqref{r.00}}
 \He(a-s_1)\He(a-s_2) \He(a-s_3)F_3^{(s_1)}\big(E_3^{(s_1)}F_3^{(s_2)}
1_{(n-a,n-a,a+s_2,a-s_2)}\big)
 \big(E_3^{(s_2)} F_3^{(s_3)}1_{(n-a,n-a,a+s_3,a-s_3)} \big) E_3^{(s_3)}
 \nonumber \\
&\scs  \refequal{\eqref{eq.com1}}
\He(a-s_1)\He(a-s_2) \He(a-s_3)\xsum{s_4,s_5}{}
{s_2+s_1 \brack s_4}{s_2+s_3 \brack s_5}
 F_3^{(s_1)} \big(F_3^{(s_2-s_4)}
E_3^{(s_1-s_4)}1_{(n-a,n-a,a+s_2,a-s_2)}\big) \nonumber \\
& \qquad \qquad \qquad \qquad \qquad \scs \times
 \big(F_3^{(s_3-s_5)} E_3^{(s_2-s_5)}1_{(n-a,n-a,a+s_3,a-s_3)}\big)
 E_3^{(s_3)}  1_{(n-a,n-a,a,a)}  \nonumber \\
&\scs  \refequal{\eqref{eq.com0}}
 \He(a-s_1)\He(a-s_2) \He(a-s_3)
  \xsum{s_4,s_5}{} {s_2+s_1 \brack s_4}{s_2+s_3 \brack s_5}
 F_3^{(s_1)} F_3^{(s_2-s_4)}  1_{(n-a,n-a,a+s_1+s_2-s_4,a-s_1-s_2+s_4)}
 \nonumber \\
& \qquad \qquad \qquad \scs \times
 \big(E_3^{(s_1-s_4)} F_3^{(s_3-s_5)}  1_{(n-a,n-a,a+s_2+s_3-s_5,a-s_2-s_3+s_5)} \big)
 E_3^{(s_2-s_5)} E_3^{(s_3)}  1_{(n-a,n-a,a,a)}  \nonumber \\
&\scs  \refequal{\eqref{r.00}}
  \He(a-s_1)\He(a-s_2) \He(a-s_3) \He(a+s_1+s_2-s_4) \He(a+s_2+s_3-s_5)\times
 \nonumber \\
 & \quad  \scs \xsum{s_4,s_5}{} {s_2+s_1 \brack s_4}{s_2+s_3 \brack s_5}
 F_3^{(s_1)} F_3^{(s_2-s_4)}
 \big(E_3^{(s_1-s_4)} F_3^{(s_3-s_5)}  1_{(n-a,n-a,a+s_3+s_2-s_5,a-s_3-s_2+s_5)} \big)
 E_3^{(s_2-s_5)} E_3^{(s_3)}  1_{(n-a,n-a,a,a)}  \nonumber \\
&\scs  \refequal{\eqref{eq.com1}}
\scs \He(a-s_1)\He(a-s_2) \He(a-s_3) \He(a+s_1+s_2-s_4) \He(a+s_2+s_3-s_5)\times
 \nonumber \\
 & \quad  \scs
\xsum{s_4,s_5,s_6}{}
 {s_2+s_1 \brack s_4}
 {s_2+s_3 \brack s_5}
 {s_1+2s_2+s_3-s_4-s_5 \brack s_6}
 F_3^{(s_1)} F_3^{(s_2-s_4)}\big(F_3^{(s_3-s_5-s_6)}
 E_3^{(s_1-s_4-s_6)}\big)E_3^{(s_2-s_5)}
E_3^{(s_3)}  1_{(n-a,n-a,a,a)} \nonumber \\
&\scs  \refequal{\eqref{r.00}}
\scs \He(a-s_1)\He(a-s_2) \He(a-s_3) \He(a+s_1+s_2-s_4) \He(a+s_2+s_3-s_5)
\He(a+s_1+s_2+s_3-s_4-s_5-s_6)\times
 \nonumber \\
 & \quad  \scs
\xsum{s_4,s_5,s_6}{}
 {s_2+s_1 \brack s_4}
 {s_2+s_3 \brack s_5}
 {s_1+2s_2+s_3-s_4-s_5 \brack s_6}
\big( F_3^{(s_1)} F_3^{(s_2-s_4)}F_3^{(s_3-s_5-s_6)}\big)
 \big(E_3^{(s_1-s_4-s_6)}E_3^{(s_2-s_5)}
E_3^{(s_3)}\big)  1_{(n-a,n-a,a,a)} \nonumber \\
&\scs  \refequal{\eqref{eq.com4}}
\scs \He(a-s_1)\He(a-s_2) \He(a-s_3) \He(a+s_1+s_2-s_4) \He(a+s_2+s_3-s_5)
\He(a+s_1+s_2+s_3-s_4-s_5-s_6)\times
\nonumber \\
& \scs
 \xsum{s_4,s_5,s_6}{}
 {s_2+s_1 \brack s_4}
 {s_2+s_3 \brack s_5}
 {s_1+2s_2+s_3-s_4-s_5 \brack s_6} {s_1+s_2-s_4 \brack s_1}
 {s_1+s_2+s_3-s_4-s_5-s_6 \brack s_1+s_2-s_4}
 \nonumber \\
&\scs  \qquad \quad \times
 {s_2+s_3-s_5 \brack s_3}
 {s_1+s_2+s_3-s_4-s_5-s_6 \brack s_2+s_3-s_5}
 F_3^{(s_1+s_2+s_3-s_4-s_5-s_6)}
 E_3^{(s_1+s_2+s_3-s_4-s_5-s_6)} 1_{(n-a,n-a,a,a)}. \nonumber
\end{align}
\endgroup
Then to complete the computation of $\sY_\cE(a)1_\vartheta$ from
equation~\eqref{eq.tref}, set  $\tau=s_1+s_2+s_3-s_4-s_5-s_6$ for simplicity
and use the above computation to simplify each term in the sum from
\eqref{eq.tref}
\begingroup
\allowdisplaybreaks
\begin{align}
& \scs 1_{(n,n,0,0)} E_2^{(a)}E_1^{(a)}  E_3^{(a)}E_2^{(a)}
  F_3^{(s_1+s_2+s_3-s_4-s_5-s_6)}
 E_3^{(s_1+s_2+s_3-s_4-s_5-s_6)}
  F_2^{(a)}F_3^{(a)}  F_1^{(a)}F_2^{(a)}1_{(n,n,0,0)}
  \nonumber \\
& \scs = E_2^{(a)}E_1^{(a)}E_3^{(a)}
\big(E_2^{(a)}  F_3^{(\tau)} \big)
 \big(E_3^{(\tau)}
  F_2^{(a)}\big)F_3^{(a)}F_1^{(a)}F_2^{(a)}1_{(n,n,0,0)}
  \nonumber \\
& \refequal{\eqref{eq.com4}}
\scs  E_2^{(a)}E_1^{(a)}E_3^{(a)}
\big( F_3^{(\tau)}E_2^{(a)} \big)
 \big(F_2^{(a)}E_3^{(\tau)}
\big)F_3^{(a)}F_1^{(a)}F_2^{(a)}1_{(n,n,0,0)}
  \nonumber \\
  & \scs \refequal{\eqref{eq.com0}}
E_2^{(a)}E_1^{(a)}\big(E_3^{(a)}
F_3^{(\tau)}1_{(n-a,n,\tau,a-\tau)} \big)E_2^{(a)}
F_2^{(a)}\big(E_3^{(\tau)}
F_3^{(a)} 1_{(n-a,n,a,0)}\big)F_1^{(a)}F_2^{(a)}
  \nonumber \\
& \refequal{\eqref{eq.com1}} \scs
\xsum{p_1,p_2}{} \scs {\tau \brack p_2}{\tau \brack p_1}
 E_2^{(a)}E_1^{(a)}\big(
F_3^{(\tau-p_2)}E_3^{(a-p_2)}1_{(n-a,n,\tau,a-\tau)} \big)E_2^{(a)}
F_2^{(a)}\big(
F_3^{(a-p_1)}E_3^{(\tau-p_1)} 1_{(n-a,n,a,0)}\big)F_1^{(a)}F_2^{(a)}
  \nonumber \\
& \scs =
\xsum{p_1,p_2}{} \scs {\tau \brack p_2}{\tau \brack p_1}
 1_{\vartheta} \big(E_2^{(a)}E_1^{(a)} F_3^{(\tau-p_2)}\big)
E_3^{(a-p_2)} E_2^{(a)}
F_2^{(a)} F_3^{(a-p_1)} \big(E_3^{(\tau-p_1)} F_1^{(a)}F_2^{(a)}\big)
1_{\vartheta}
  \nonumber \\
& \scs \refequal{\eqref{eq.com4}}
\xsum{p_1,p_2}{} \scs {\tau \brack p_2}{\tau \brack p_1}
 1_{\vartheta} \big(F_3^{(\tau-p_2)} E_2^{(a)}E_1^{(a)}\big)
E_3^{(a-p_2)} E_2^{(a)}
F_2^{(a)} F_3^{(a-p_1)} \big( F_1^{(a)}F_2^{(a)}E_3^{(\tau-p_1)}\big)
1_{\vartheta}
 \nonumber \\
& \scs =
\xsum{p_1,p_2}{} \scs {\tau \brack p_2}{\tau \brack p_1}
 \big(1_{\vartheta}F_3^{(\tau-p_2)}\big) E_2^{(a)}E_1^{(a)}
E_3^{(a-p_2)}  E_2^{(a)}
F_2^{(a)} F_3^{(a-p_1)} F_1^{(a)}F_2^{(a)}\big(E_3^{(\tau-p_1)}
1_{\vartheta}\big)
\nonumber \\
& \scs \refequal{\eqref{eq.Ei}}
\xsum{p_1,p_2}{} \scs {\tau \brack p_2}{\tau \brack p_1}
 \big(1_{\vartheta} \delta_{\tau,p_2}\big) E_2^{(a)}E_1^{(a)} E_3^{(a-p_2)}
E_2^{(a)}
F_2^{(a)} F_3^{(a-p_1)} F_1^{(a)}F_2^{(a)}
\big(\delta_{\tau,p_1} 1_{\vartheta}\big)
\nonumber \\
&\scs \refequal{\eqref{eq.com0}}
   E_2^{(a)}E_1^{(a)} E_3^{(a-\tau)}
  \big(E_2^{(a)}F_2^{(a)} 1_{\maltese(n-a,n,\tau,a-\tau)}  \big)
F_3^{(a-\tau)} F_1^{(a)}F_2^{(a)}  1_{\vartheta}
\nonumber \\
& \scs  \refequal{\eqref{eq.com1}}
\xsum{s_7}{} {n-\tau \brack s_7}
   E_2^{(a)}E_1^{(a)} E_3^{(a-\tau)}
  \big(F_2^{(a-s_7)} E_2^{(a-s_7)}1_{(n-a,n,\tau,a-\tau)}  \big)
F_3^{(a-\tau)} F_1^{(a)}F_2^{(a)}  1_{\vartheta}
  \nonumber \\
&\scs =
\xsum{s_7}{} {n-\tau \brack s_7}
   E_2^{(a)}\big(E_1^{(a)} E_3^{(a-\tau)}
F_2^{(a-s_7)}\big) \big(E_2^{(a-s_7)}F_3^{(a-\tau)} F_1^{(a)}\big)F_2^{(a)}
1_{\vartheta}
\nonumber \\
& \scs \refequal{\eqref{eq.com4}}
\xsum{s_7}{} {n-\tau \brack s_7}
   E_2^{(a)}\big(F_2^{(a-s_7)} E_1^{(a)} E_3^{(a-\tau)}
\big) \big( F_3^{(a-\tau)} F_1^{(a)} E_2^{(a-s_7)}\big)F_2^{(a)}
1_{\vartheta}
\nonumber \\
&\scs \refequal{\eqref{eq.com0}}
\xsum{s_7}{} {n-\tau \brack s_7}
   \big(E_2^{(a)}F_2^{(a-s_7)}1_{\maltese(n,n-s_7,a-s_7,0)}\big)
E_1^{(a)} E_3^{(a-\tau)}
F_3^{(a-\tau)} F_1^{(a)} \big( E_2^{(a-s_7)}F_2^{(a)}  1_{\vartheta}\big)
\nonumber \\
&\scs  \refequal{\eqref{eq.com1}}
\xsum{s_7,v_1,v_2}{} {n-\tau \brack s_7}
{n+s_7 \brack v_1}{n-s_7 \brack v_2}
   \big(1_{\vartheta}F_2^{(a-s_7-v_2)} E_2^{(a-v_2)} \big)
E_1^{(a)} E_3^{(a-\tau)}
F_3^{(a-\tau)} F_1^{(a)} \big(F_2^{(a-v_1)}  E_2^{(a-s_7-v_1)}
1_{\vartheta}\big)
\nonumber \\
& \scs =
\xsum{s_7,v_1,v_2}{} {n-\tau \brack s_7}
{n+s_7 \brack v_1}{n-s_7 \brack v_2}
   \big(1_{\vartheta}F_2^{(a-s_7-v_2)}\big) E_2^{(a-v_2)}E_1^{(a)}
E_3^{(a-\tau)}
F_3^{(a-\tau)} F_1^{(a)}F_2^{(a-v_1)}  \big(E_2^{(a-s_7-v_1)}1_{\vartheta}\big)
\nonumber \\
& \scs \refequal{\eqref{eq.Ei}}
\xsum{s_7,v_1,v_2}{} {n-\tau \brack s_7}
{n+s_7 \brack v_1}{n-s_7 \brack v_2}
   \big(1_{\vartheta}\delta_{v_2,a-s_7} \big) E_2^{(a-v_2)}E_1^{(a)}
E_3^{(a-\tau)}
F_3^{(a-\tau)} F_1^{(a)}F_2^{(a-v_1)}  \big(\delta_{v_1,a-s_7}
1_{\vartheta}\big)
\nonumber \\
&\scs =
\xsum{s_7}{} {n-\tau \brack s_7}
{n+s_7 \brack a-s_7}{n-s_7 \brack a-s_7}
     E_2^{(s_7)}E_1^{(a)} E_3^{(a-\tau)}
F_3^{(a-\tau)} F_1^{(a)}F_2^{(s_7)}    1_{\vartheta}
\nonumber \\
&\scs \refequal{\eqref{eq.com0}}
\xsum{s_7}{} {n-\tau \brack s_7}
{n+s_7 \brack a-s_7}{n-s_7 \brack a-s_7}
     E_2^{(s_7)}E_1^{(a)}
     \big(E_3^{(a-\tau)}F_3^{(a-\tau)}1_{(n-a,n+a-s_7,s_7,0)} \big)
F_1^{(a)}F_2^{(s_7)} 1_{\vartheta}
\nonumber \\
& \scs \refequal{\eqref{eq.com1}}
\xsum{s_7,v_3}{} {n-\tau \brack s_7}
{n+s_7 \brack a-s_7}{n-s_7 \brack a-s_7}{s_7 \brack v_3}
     E_2^{(s_7)}E_1^{(a)}
     \big(F_3^{(a-\tau-v_3)}E_3^{(a-\tau-v_3)}1_{(n-a,n+a-s_7,s_7,0)} \big)
F_1^{(a)}F_2^{(s_7)} 1_{\vartheta}
\nonumber \\
& \scs \refequal{\eqref{eq.com4}}
\xsum{s_7,v_3}{} {n-\tau \brack s_7}
{n+s_7 \brack a-s_7}{n-s_7 \brack a-s_7}{s_7 \brack v_3}
   \big(1_{\vartheta} F_3^{(a-\tau-v_3)}\big) E_2^{(s_7)}E_1^{(a)}
   F_1^{(a)}F_2^{(s_7)} \big(  E_3^{(a-\tau-v_3)}  1_{\vartheta} \big)
\nonumber \\
&\scs \refequal{\eqref{eq.Ei}}
\xsum{s_7,v_3}{} {n-\tau \brack s_7}
{n+s_7 \brack a-s_7}{n-s_7 \brack a-s_7}{s_7 \brack v_3}
   \big(1_{\vartheta} \delta_{v_3,a-\tau}\big) E_2^{(s_7)}E_1^{(a)}
   F_1^{(a)}F_2^{(s_7)} \big(  \delta_{v_3,a-\tau}   1_{\vartheta} \big)
   \nonumber \\
&\scs \refequal{\eqref{eq.com0}}
\xsum{s_7 }{} {n-\tau \brack s_7}
{n+s_7 \brack a-s_7}{n-s_7 \brack a-s_7}{s_7 \brack a-\tau}
     E_2^{(s_7)} \big(E_1^{(a)}
   F_1^{(a)} 1_{(n,n-s_7,s_7,0)}\big)F_2^{(s_7)}
\nonumber \\
&\scs \refequal{\eqref{eq.com1}}
\xsum{s_7,v_4}{} {n-\tau \brack s_7}
{n+s_7 \brack a-s_7}{n-s_7 \brack a-s_7}{s_7 \brack a-\tau} {s_7 \brack v_4}
     E_2^{(s_7)} \big(
   F_1^{(a-v_4)} E_1^{(a-v_4)}1_{(n,n-s_7,s_7,0)}\big)F_2^{(s_7)}
1_{\vartheta}
\nonumber \\
&\scs \refequal{\eqref{eq.com4}}
\xsum{s_7,v_4}{} {n-\tau \brack s_7}
{n+s_7 \brack a-s_7}{n-s_7 \brack a-s_7}{s_7 \brack a-\tau} {s_7 \brack v_4}
   \big( 1_{\vartheta} F_1^{(a-v_4)} \big)E_2^{(s_7)}
F_2^{(s_7)} \big(E_1^{(a-v_4)}   1_{\vartheta}\big)
\nonumber \\
&\scs \refequal{\eqref{eq.Ei}}
\xsum{s_7}{} {n-\tau \brack s_7}
{n+s_7 \brack a-s_7}{n-s_7 \brack a-s_7}{s_7 \brack a-\tau}
\maltese{s_7 \brack a} 1_{\vartheta} E_2^{(s_7)}
F_2^{(s_7)} 1_{\vartheta}
\end{align}
\endgroup
Tracing through this computation we have placed the symbol $\maltese$ to
indicate places where we must introduce Heaviside functions, so that the
end result should be multiplied by
\[
 \He(\tau)\He(a-\tau)\He(a-s_7) \He(s_7-a)
\]
from which, we deduce that $s_7=a$ from the quantum binomials appearing in
the summation.  The last Heaviside function of $\He(s_7-a)$ arises from
the definition of quantum binomial coefficients \eqref{eq.qbinom}. Thus,
the sum simplifies to
\begin{align}
& \scs   \He(\tau)\He(a-\tau) {n-\tau \brack a}
{n+a \brack 0}{n-a \brack 0}
{a \brack a-\tau}{a \brack a}
1_{(n,n,0,0)} E_2^{(a)}
F_2^{(a)} 1_{(n,n,0,0)} \nonumber \\
&= \scs   \He(\tau)\He(a-\tau) {n-\tau \brack a}
 {a \brack a-\tau}
1_{(n,n,0,0)} E_2^{(a)}
F_2^{(a)} 1_{(n,n,0,0)}  \nonumber \\
&\refequal{\eqref{eq.com1}}
\scs  \He(\tau)\He(a-\tau)\xsum{v_6}{}
 {n-\tau \brack a}
 {a \brack a-\tau}
 {n \brack v_6}
1_{(n,n,0,0)}
F_2^{(a-v_6)}E_2^{(a-v_6)} 1_{(n,n,0,0)}
\nonumber \\
&\refequal{\eqref{eq.Ei}}
\scs  \He(\tau)\He(a-\tau)
 {n-\tau \brack a}
 {a \brack a-\tau}
 {n \brack a}
1_{\vartheta}
\end{align}
Putting it altogether, the $a$-colored trefoil evaluates to
equation~\eqref{eq.trefoil}.

\section{Proof of parts (b) and  (c) of Proposition \ref{r.WL}}
\label{app.c}

For a compact oriented surface (possibly with boundary) $\Sigma$ let
$\calS(\Sigma)$ be the  \HOMFLY\
skein algebra of $\Sigma$, as defined in \cite{AM,Mo2}. Recall that as a
$\BQ(x,q)$-module,
$\calS(\Sigma)$ is generated by oriented links diagrams on $\Sigma$ modulo
the regular isotopy, the two relations
\[
\hackcenter{
\begin{tikzpicture}[scale =0.8]
\draw[very thick, directed=1] (-.5,0) -- (.5,1.5);
\draw[very thick ] (.5,0) -- (.1,.6);
\draw[very thick, directed=1 ] (-.1,.9) -- (-.5,1.5);
\end{tikzpicture}}
\quad - \quad
\hackcenter{
\begin{tikzpicture} [scale =0.8]
\draw[very thick, directed=1] (.5,0) -- (-.5,1.5);
\draw[very thick ] (-.5,0) -- (-.1,.6);
\draw[very thick, directed=1 ] (.1,.9) -- (.5,1.5);
\end{tikzpicture} }
\;\; = \;\;
(q-q^{-1}) \;\;
\hackcenter{
\begin{tikzpicture} [scale =0.8]
\draw[very thick, directed=1] (.5,0) .. controls ++(-.2,.4) and ++(-.2,-.4) .. (.5,1.5);
\draw[very thick,directed=1 ] (-.5,0)  .. controls ++(.2,.4) and ++(.2,-.4) .. (-.5,1.5);
\end{tikzpicture}}
\]
\[
\hackcenter{
\begin{tikzpicture} [scale =0.8]
\draw[very thick ] (0,-.25) -- (0,.5)
    .. controls ++(0,.4) and ++(0,.5) .. (.75,.5)
    .. controls ++(0,-.4) and ++(0,-.15) .. (.25,.5);
\draw[very thick, directed=1 ] (.1,.95) ..controls  ++(-.1,.3) and ++(0,-.3).. (0,1.75);
\end{tikzpicture}}
\;\; = \;\; x\;\;
\hackcenter{
\begin{tikzpicture} [scale =0.8]
\draw[very thick,directed=1 ] (0,-.25) -- (0,1.75);
\end{tikzpicture}}
\]
and the relation that a disjoint trivial knot can be removed from a diagram at the expense of multiplication with $\frac{x- x^{-1}}{q-q^{-1}}$. The product $L_1L_2$ of two links diagrams is the obtained by placing $L_1$ atop $L_2$.
When $\Sigma$ is a disk, $\calS(\Sigma) \cong \BQ(q,x)$ via a map $L \to \la L\ra$, where $\la L \ra$ is a framed version of the \HOMFLY\ polynomial.

The \HOMFLY\ skein algebra of the annulus contains the subalgebra $\calC^+$ generated by the closure of all braids. It is known that $\calC^+$ is isomorphic to the algebra of symmetric functions (with ground ring $\BQ(q,x)$). Under this isomorphism, the Schur function $s_\lambda$ of a partition $\lambda$ corresponds to a certain skein element $Q_\lambda$ which will be recalled later. The relation with the colored \HOMFLY\ polynomial is as follows. For an oriented link diagram $L$ on the disk with
 $r$ ordered components, and for partitions
$\lambda_i$ for $i=1,\dots,r$ we have:
\be
\label{eq.WLa}
W_L(\lambda_1,\dots,\lambda_r)=
\la L \ast (Q_{\lambda_1}, \dots, Q_{\lambda_r}) \ra
\ee
Here, $L \ast (Q_{\lambda_1}, \dots, Q_{\lambda_r})$ is the $\BQ(q,x)$-linear
combination of  link diagrams on the disk obtained by replacing the $i$th component
of $L$ by $Q_{\lambda_i}$. The above equality implies part (b) of Proposition \ref{r.WL}.


Let $\sigma: \BQ(x,q) \to \BQ(x,q)$ denote the $\BQ$-algebra automorphism given by
$\sigma(x)=x$, $\sigma(q)=-q^{-1}$.  One can easily check that $\sigma$ extends to a $\BQ$-linear automorphism of $\calS(\Sigma)$ for any $\Sigma$ by setting $\sigma(L):= L$ for any link diagram $L$ on $\Sigma$.
It is easy to see that $y$ is an element of the \HOMFLY\ skein algebra of the disk, then
 \be
\label{eq.sigmaL}
\sigma(\la y \ra)=\la \sigma (y)  \ra.
\ee

\begin{lemma} For any partition $\lambda$ one has
\be
\label{eq.Qlambda2}
\sigma(Q_{\lambda}) = Q_{\lambda^\dagger}\,.
\ee
\end{lemma}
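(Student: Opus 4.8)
The plan is to show that $\sigma$, restricted to $\calC^+$, coincides under the isomorphism $\calC^+\cong\La$ with the classical fundamental involution $\omega$ of symmetric functions, and then to invoke the standard identity $\omega(s_\lambda)=s_{\lambda^\dagger}$ (Macdonald). First I would record that $\sigma$ is a \emph{ring} endomorphism of $\calS(\Sigma)$, not merely $\BQ$-linear: by construction it fixes every link diagram and acts on scalars by $q\mapsto -q^{-1}$, so for $y_1=\sum c_iL_i$ and $y_2=\sum d_jM_j$ we get $\sigma(y_1y_2)=\sum\sigma(c_i)\sigma(d_j)\,L_iM_j=\sigma(y_1)\sigma(y_2)$, since the stacking product acts diagrammatically. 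Thus $\sigma$ is a $(q\mapsto -q^{-1})$-semilinear ring automorphism of $\calC^+$, and since $\sigma^2=\mathrm{id}$ it is an involution.

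Next I would reduce the claim to a single family of generators via Jacobi--Trudi. Under $\calC^+\cong\La$ one has $Q_\lambda\leftrightarrow s_\lambda$, $Q_{(m)}\leftrightarrow h_m$ and $Q_{(1^m)}\leftrightarrow e_m$, and the ordinary Jacobi--Trudi formula writes each $Q_\lambda$ as a determinant with \emph{integer} coefficients in the $Q_{(m)}$. Hence it suffices to prove the single identity $\sigma(Q_{(m)})=Q_{(1^m)}$ for all $m\ge 1$ (together with $\sigma(1)=1$, $\sigma(0)=0$, which respect the conventions $h_0=e_0=1$ and $h_k=e_k=0$ for $k<0$). Granting this, because $\sigma$ fixes integer scalars and is a ring homomorphism,
\[
\sigma(Q_\lambda)
=\det\big(\sigma(Q_{(\lambda_i-i+j)})\big)_{i,j=1}^{\ell}
=\det\big(Q_{(1^{\lambda_i-i+j})}\big)_{i,j=1}^{\ell}
=Q_{\lambda^\dagger},
\]
where $\ell$ is the length of $\lambda$ and the last equality is the dual Jacobi--Trudi formula $s_{\lambda^\dagger}=\det(e_{\lambda_i-i+j})$ (equivalently, on the integral subalgebra $\BZ[h_1,h_2,\dots]\subset\La$, where every $s_\lambda$ lives and where semilinearity is invisible, $\sigma$ agrees with $\omega$ because both are ring maps sending $h_m\mapsto e_m$).

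The only genuinely skein-theoretic input is the step $\sigma(Q_{(m)})=Q_{(1^m)}$, and this is where I expect the real work to be. I would use the description of $Q_{(m)}$ and $Q_{(1^m)}$ as closures of $m$ parallel strands in the annulus decorated by the quantum symmetrizer and the quantum antisymmetrizer in the Hecke algebra $H_m$ (Morton--Aiston). These idempotents are the eigen-projections of the braid generators $\sigma_i$ for the two one-dimensional characters $\sigma_i\mapsto q$ and $\sigma_i\mapsto -q^{-1}$ of $H_m$. Since the $\sigma_i$ are honest diagrams and are therefore fixed by $\sigma$, while the idempotents' structure constants transform under $q\mapsto -q^{-1}$, the substitution interchanges the two characters and carries the symmetrizer to the antisymmetrizer; closing up then yields $\sigma(Q_{(m)})=Q_{(1^m)}$. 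The main obstacle is precisely to pin down the normalization of these idempotents and to verify that any framing or normalization factor (powers of $q$ from writhe/twist corrections) attached to $Q_{(m)}$ matches the corresponding factor for $Q_{(1^m)}$ after applying $q\mapsto -q^{-1}$, so that no stray scalar is introduced; once this bookkeeping is settled, the argument above closes the proof.
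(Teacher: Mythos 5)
Your proposal is correct and follows essentially the same route as the paper: the paper likewise reduces the general case to the one-row identity $\sigma(Q_{(a)})=Q_{(1^a)}$ via the two Jacobi--Trudi determinants $s_{\lambda}=\det(h_{\lambda_i+j-i})$, $s_{\lambda^\dagger}=\det(e_{\lambda_i+j-i})$, and verifies that base case from Morton--Aiston's explicit description of $Q_{(a)}$ and $Q_{(1^a)}$ as the normalized sums $\frac{1}{\alpha_{(a)}}\sum_{\pi\in\Sym_a}q^{l(\pi)}\widehat{\omega}_\pi$ and $\frac{1}{\alpha_{(1^a)}}\sum_{\pi\in\Sym_a}(-q^{-1})^{l(\pi)}\widehat{\omega}_\pi$ over closures of positive permutation braids, which is the concrete form of your symmetrizer/antisymmetrizer argument. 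The normalization bookkeeping you flag as the remaining obstacle is exactly what the hook-length formula $\alpha_\lambda=\prod_{(i,j)\in\lambda}q^{j-i}[\mathrm{hook}(ij)]$ settles: it gives $\alpha_{(a)}=q^{a(a-1)/2}[a]!$ and $\alpha_{(1^a)}=q^{-a(a-1)/2}[a]!$, and since $[m]\mapsto(-1)^{m-1}[m]$ under $q\mapsto -q^{-1}$, the signs cancel to yield $\sigma(\alpha_{(a)})=\alpha_{(1^a)}$, so no stray scalar appears.
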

\begin{proof}  In~\cite{AM}, Morton-Aiston gave a geometric description of $Q_\lambda$
in terms of closures of braids. Let us recall this formula for partitions
with one row $h_a=(a)$ or one column $e_a=(1^a)$ from~\cite[p.11]{AM}:
\be
\label{eq.Qaa}
Q_{(a)} = \frac{1}{\alpha_{(a)}}
\sum_{\pi \in \Sym_a} q^{l(\pi)} \widehat{\omega}_\pi, \qquad
Q_{(1^a)} = \frac{1}{\alpha_{(1^a)}}
\sum_{\pi \in \Sym_a} (-q^{-1})^{l(\pi)} \widehat{\omega}_\pi \,.
\ee
Here, for a permutation $\pi$ of $\Sym_n$, $\omega_\pi$ denotes the
positive braid corresponding to $\pi$, and $\widehat{\omega}_\pi \in \calC$ denotes the closure
of $\omega_\pi$. Moreover, $\alpha_\lambda$ is given by~\cite[p.14]{AM}
\be
\label{eq.alphalambda}
\alpha_\lambda = \prod_{(i,j) \in \lambda} q^{j-i} [\text{hook}(ij)]
\ee
where $\text{hook}(ij)$ is the hook-length of the cell $(i,j)$ of the
partition $\lambda$.

From  Equations~\eqref{eq.Qaa}
and~\eqref{eq.alphalambda} one can readily check that  $\sigma(Q_{(a)})=Q_{(1^a)}$, proving the lemma for the case $\lambda = h_a=(a)$. The case of general $\lambda$ can be proved similarly, using explicit formulas of $Q_\lambda$ as described in \cite{AM}. Alternatively, one can reduce the general case to the case of one row as follows. The two Jacobi-Trudy formulas
$$
s_{\lambda}=\det\left( (h_{\lambda_i+j-i})_{i,j=1}^\ell\right),
\qquad
s_{\lambda^\dagger}=\det\left( (e_{\lambda_i+j-i})_{i,j=1}^\ell\right),
$$
together with the case $\lambda=h_a$ implies the lemma for general partitions.
\end{proof}

Suppose $L$ is an  oriented link diagram $L$ on the disk with
 $r$ ordered components, and
$\lambda_i$ for $i=1,\dots,r$ are partitions. We have
\begin{align*}
\sigma (W_L(\lambda_1, \dots, \lambda_r))&= \sigma \left ( \la L * (Q_{\lambda_1}, \dots, Q_{\lambda_r})  \ra  \right )  \qquad \text{by  \eqref{eq.WLa}} \\
&= \la \sigma (L * (Q_{\lambda_1}, \dots, Q_{\lambda_r}) )  \ra   \qquad \text{by  \eqref{eq.sigmaL}} \\
&= \la L * (\sigma (Q_{\lambda_1}), \dots, \sigma (Q_{\lambda_r}) )  \ra\\
&=  \la L * (Q_{\lambda_1^\dagger}, \dots, Q_{\lambda_r^\dagger}) )  \qquad \text{by  \eqref{eq.Qlambda2}} \\
&= W_L(\lambda_1^\dagger, \dots, \lambda_r^\dagger).
\end{align*}
This  concludes the proof of part (c).
\qed

\section{The recursion for the colored \HOMFLY\ of the trefoil}
\label{app.d}

Let $\lambda\in \cP_{n-1}$ be a partition of length $\le n-1$. We also use
$\lambda$ to denote the corresponding $\Uqsn$-module. For every positive
integer $k$, the theory of ribbon categories gives a representation
$J: \fB_k \to \Aut(\lambda^{\ot k})$, where $\fB_k$ is the braid group on $k$
strands and $\Aut(\lambda^{\ot k})$ is the group of $\Uqsn$-linear
automorphisms of $\lambda^{\ot k}$.

Suppose $\beta\in B_m$ is a braid on $m$ strands, and $L=\cl(\beta)$ is the
oriented framed link obtained by closing $\beta$ in the standard way, with
blackboard framing. Then
\be
\label{eq.l3}
J_L(\lambda,\lambda,\dots) = \tr_q^{\lambda^{\ot m}}(J(\beta)),
\ee
where for a $\Uqsn$-linear map $f: V \to V$,
$$
\tr_q^V(f) = \tr(f \bg, V)\,.
$$
Here the right hand side is the usual trace of $f \bg$ acting on $V$, and
$\bg\in \Uqsn$ is the so-called {\em charm element} whose exact formula is
not needed here. In particular, for a finite-dimensional weight a
$\Uqsn$-module $V$, the quantum dimension $\dim_q(V):= J_U(V)$
(where $U$ is the unknot) is
$$
\dim_q(V) := \tr_q^{V}(\id) =\tr(\fg,V)\,.
$$

Let $\sigma$ be the standard generator of $\fB_2$
(see $X_{a,b}$ of Figure~\ref{fig:braiding}). Then $J(\sigma)$ is defined by
the universal $R$-matrix and the action of $J(\sigma)$ on $h_m ^{\ot 2}$ can
be calculated as follows. The decomposition of $h_m ^{\ot 2}$ into irreducible
$\Uqsn$-modules has the form
$$
h_m ^{\ot 2} = \bigoplus_{k=0}^m  \mu_{m,k}\,,
$$
where $\mu_{m,k}$ is the partition $(2m-k,k)$. Since $J(\sigma)$ is
$\Uqsn$-linear, Schur lemma shows that  there are scalars $c_{m,k}\in \Qqn$
such that on $h_m ^{\ot 2}$,
\be
\label{eq.l1}
J(\sigma)|_{h_m ^{\ot 2}} =\bigoplus_{k=0}^m c_{m,k} \id_{\mu_{m,k}}.
\ee

One of the axioms of the ribbon structure of $\Uqsn$  is that
\be
\label{eq.l}
J(\sigma^2)|_{V\ot W}= (\br_V ^{-1} \ot \br_{W}^{-1}) \br_{V\ot W},
\ee
where $\br$ is the ribbon element, which belongs to the center of a certain
completion of $\Uqsn$ and acts on any finite-dimensional weight
$\Uqsn$-module, see \cite{Tu2,Ohtsuki}. Geometrically, $\br=J_{T}$, where
$T$ is the trivial 1-1 tangle with framing 1, and its action on $\lambda$ is
known (see e.g. \cite[Equ 1.7]{Le:Integral}):
\be
\label{eq.twist}
\br|_{V_\lambda} =  \br(\lambda) \id_\lambda,\quad \text{where}\
\br(\lambda)= q ^{ \la \lambda, \lambda + 2 \rho \ra}.
\ee
Here $\la \cdot, \cdot \ra$ is the inner product on the weight space of
$\Uqsn$ normalized such that each root has square length 2, and $2\rho$ is
the sum of all positive roots.

Using \eqref{eq.l} in the square of \eqref{eq.l1}, we get
$$
(c_{m,k})^2 = \br(\mu_{m,k})\, \br(h_m)^{-2}\,.
$$
Taking the square root and using \eqref{eq.twist}), one gets the value of
$(c_{m,k})$, up to sign $\pm1$. The sign can be determined by noting that when
$q=1$, $J(\sigma)$ is the permutation, $J(\sigma)(x_1\ot x_2) = x_2 \ot x_1$.
Eventually, we get
\be
\label{eq.R1}
c_{m,k}= (-1)^k q^{-m^2/n }  q^{m^2 -2mk + k^2 -k}.
\ee
Suppose $T_{s}$ is the link closure of $\sigma^s$, which is a torus link of
type $(2,s)$. By~\eqref{eq.l3} and the decomposition \eqref{eq.l1},
\begin{align}
\tJ_{T_s}(h_m) &=  q^{s m^2/n} J_{T_s}(h_m)=q^{s m^2/n}
\sum_{k=0}^m (c_{m,k})^s \dim_q(\mu_{m,k})
\nonumber\\
&= \sum_{k=0}^m  (-1)^{sk} q^{s(m^2 -2mk + k^2 -k)} \dim_q(\mu_{m,k})
\nonumber\\
&= \sum_{k=0}^m(-1)^{sk} q^{s( m^2-2mk+k^2 -k )}
\qbinom{x;k-2}{k}\qbinom{x;2m-k-1}{2m-k} \frac{[2m-2k+1]}{[2m-k+1]}
\label{eq.tor},
\end{align}
where $x=q^n$. In the last equality we use the well-known formula for the
quantum dimension,  see e.g. \cite[Equ. (11)]{Mo1}, which was first
established by Reshetikhin.
The right hand side of \eqref{eq.tor} gives a formula for $W_{T_r}(h_m)$.
When $s=3$, we get another formula of $W_{T_3}$ for the trefoil, which is
simpler than the one given in Section \ref{sub.trefoil}, since it is a
one-dimensional sum.

For odd $s$, let $\oT_s$ be the torus knot $T_s$ with 0 framing. Then,
adjusting the framing, from \eqref{eq.tor} we get
\be
\label{eq.tor2}
W_{\oT_s}(h_m)= \sum_{k=0}^m(-1)^{k} q^{s(-m-2mk+k^2 -k )}
\qbinom{x;k-2}{k}\qbinom{x;2m-k-1}{2m-k} \frac{[2m-2k+1]}{[2m-k+1]}.
\ee
Using  the Zeilberger algorithm \cite{PWZ}, we get the recurrence
relation for $W_{\oT_3}(h_m)$ as described in Section~\ref{sub.rec31}.

\bibliographystyle{hamsalpha}
\bibliography{biblio}
\end{document}